\newcommand{\abs}[1]{\left| #1 \right|}
\newtheorem{theorem}{Theorem}[subsection]
\newtheorem{proposition}[theorem]{Proposition}
\newtheorem{conjecture}[theorem]{Conjecture}
\newtheorem{lemma}[theorem]{Lemma}
\theoremstyle{definition}
\newtheorem{definition}[theorem]{Definition}
\newtheorem{quest}[theorem]{Question}
\numberwithin{equation}{subsection}
\newcommand{\Lie}{\operatorname{Lie}}
\newcommand{\ind}{\operatorname{ind}}
\newcommand{\Ext}{\operatorname{Ext}}
\newcommand{\Hom}{\operatorname{Hom}}
\newcommand{\ga}{\gamma}
\newcommand{\la}{\lambda}
\newcommand{\al}{\alpha}
\newcommand{\ta}{\tilde{\alpha}}
\newcommand{\si}{\sigma}
\newcommand{\St}{\operatorname{St}}
\newcommand{\Dist}{\operatorname{Dist}}
\begin{document}

\title[On Tensoring with the Steinberg Representation]
{\bf On Tensoring with the Steinberg Representation}

\begin{abstract} Let $G$ be a simple, simply connected algebraic group over an algebraically closed field of prime characteristic $p>0$. 
Recent work of Kildetoft and Nakano and of Sobaje has shown close connections between two long-standing conjectures of Donkin: one on tilting modules 
and the lifting of projective modules for Frobenius kernels of $G$ and another on the existence of certain filtrations of $G$-modules. A key question related to these conjectures 
is whether the tensor product of the $r$th Steinberg module with a simple module with $p^{r}$th restricted highest weight admits a good filtration. In this paper we verify this statement when 
(i) $p\geq 2h-4$ ($h$ is the Coxeter number), (ii) for all rank two groups, (iii) for $p\geq 3$ when the simple module corresponds to a fundamental weight and 
(iv) for a number of cases when the rank is less than or equal to five. 
\end{abstract}

\author{\sc Christopher P. Bendel}
\address
{Department of Mathematics, Statistics and Computer Science\\
University of
Wisconsin-Stout \\
Menomonie\\ WI~54751, USA}
\thanks{Research of the first author was supported in part by Simons Foundation Collaboration Grant 317062}
\email{bendelc@uwstout.edu}

\author{\sc Daniel K. Nakano}
\address
{Department of Mathematics\\ University of Georgia \\
Athens\\ GA~30602, USA}
\thanks{Research of the second author was supported in part by
NSF grant DMS-1701768}
\email{nakano@math.uga.edu}

\author{\sc Cornelius Pillen}
\address{Department of Mathematics and Statistics \\ University
of South
Alabama\\
Mobile\\ AL~36688, USA}
\thanks{Research of the third author was supported in part by Simons Foundation Collaboration Grant 245236}
\email{pillen@southalabama.edu}

\author{Paul Sobaje}
\address{Department of Mathematics \\
          University of Georgia \\
          Athens, GA 30602}
\email{sobaje@uga.edu}
\thanks{Research of the fourth author was partially supported by NSF RTG grant  DMS-1344994}
\date{\today}
\subjclass[2010]{Primary 20G05, 20J06; Secondary 18G05}
\date\today

\maketitle
\section{Introduction}

\subsection{} Let $G$ be a simple, simply connected algebraic group scheme over the algebraically closed field $k$ of characteristic $p > 0$. 
Let $X$ be the set of integral weights and $X_{+}$ denote the dominant integral weights (relative to a fixed choice of a Borel subgroup). For any $\lambda\in X_{+}$, one can construct a 
non-zero module $\nabla(\lambda)=\text{ind}_{B}^{G}\lambda$ and the Weyl module $\Delta(\lambda)$. The character of these modules is 
given by Weyl's character formula. The finite dimensional simple modules $L(\lambda)$ are indexed by dominant integral weights $X_{+}$ 
and can be realized as the socle of $\nabla(\lambda)$ (and the head of $\Delta(\lambda)$). 

A central idea in this area has been the concept of good and Weyl filtrations. A $G$-module admits a {\em good filtration} (resp. {\em Weyl filtration}) if and only if it admits
a $G$-filtration with sections of the form $\nabla(\mu)$ (resp. $\Delta(\mu)$) where $\mu\in X_{+}$. Cohomological criteria have been 
proved by Donkin and Scott which give necessary and sufficient conditions for a module to admit a good filtration (resp. Weyl filtration). A module which 
admits both a good and Weyl filtration is called a {\em tilting module}. Ringel proved that (i) for every $\lambda\in X_{+}$, there is an indecomposable tilting 
module $T(\lambda)$ of highest weight $\la$ and (ii) every tilting module is a direct sum of these indecomposable tilting modules. 

Determining the characters of simple modules and tilting modules remains an open problem.  In 2013, Williamson \cite{Wi} produced families of counterexamples to the 
Lusztig conjecture for $G={SL}_{n}$ and showed that the Lusztig Character Formula (LCF) in this case cannot hold for any linear bound on $p$ relative to $h$ (the associated Coxeter number). 
It is now evident that the character formula for simple modules will be highly dependent on the prime $p$. Therefore, this makes the understanding of the behavior of various 
$G$-filtrations even more crucial. A new approach has been introduced by Riche and Williamson \cite{RW} that shows the characters of tilting modules and simple modules are given by $p$-Kazhdan-Lusztig 
polynomials that are constructed using $p$-Kazhdan-Lusztig bases.  

\subsection{} Let $\lambda\in X_+$ with unique decomposition $\lambda = \lambda_0 + p^r\lambda_1$ with $\lambda_0\in X_r$ ($p^{r}$th restricted weights) and $\lambda_1\in X_+$. One 
can define $\nabla^{(p,r)}(\lambda) = L(\lambda_0)\otimes \nabla(\lambda_1)^{(r)}$ where $(r)$ denotes the twisting of the module action by the $r$th Frobenius morphism. A $G$-module $M$ has 
a {\em good $(p,r)$-filtration} if and only if $M$ has a filtration with factors of the form $\nabla^{(p,r)}(\mu)$ for suitable $\mu\in X_+$. 
Let $\text{St}_r = L((p^r-1)\rho)$ (which is also isomorphic to $\nabla((p^r-1)\rho)$ and $\Delta((p^r-1)\rho)$) be the $r$th Steinberg module, where $\rho$ is the sum of the fundamental weights. 

The following conjecture, introduced by Donkin at MSRI in 1990, interrelates good filtrations with good $(p,r)$-filtrations via the Steinberg module. 

\begin{conjecture} \label{donkinconj} Let $M$ be a finite-dimensional $G$-module. Then $M$ has a good $(p,r)$-filtration if and only if $\operatorname{St}_r\otimes M$ has a good filtration.
\end{conjecture}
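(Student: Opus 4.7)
The plan is to prove the two implications separately, both leveraging Mathieu's theorem (that a tensor product of $G$-modules with good filtrations has a good filtration) and the Donkin--Scott cohomological criterion (that a finite-dimensional $G$-module $N$ has a good filtration if and only if $\Ext^1_G(\Delta(\la), N) = 0$ for every $\la \in X_+$).

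\textbf{Forward direction.} Suppose $M$ admits a good $(p,r)$-filtration. By induction on the length of the filtration together with the long exact sequence for $\Ext^1_G$, it suffices to show that each section tensored with $\St_r$, namely $\St_r \otimes \nabla^{(p,r)}(\mu) = \St_r \otimes L(\mu_0) \otimes \nabla(\mu_1)^{(r)}$, has a good filtration. Since $\nabla(\mu_1)^{(r)}$ trivially has one, Mathieu's theorem reduces the problem to showing that $\St_r \otimes L(\mu_0)$ has a good filtration for each $\mu_0 \in X_r$. This is precisely the key technical question emphasized in the paper's abstract.

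\textbf{Reverse direction.} Suppose $\St_r \otimes M$ has a good filtration. Self-duality of $\St_r$ combined with tensor--hom adjointness yields, for every $\la \in X_+$,
\[
\Ext^1_G(\Delta(\la) \otimes \St_r,\, M) \;\cong\; \Ext^1_G(\Delta(\la),\, \St_r \otimes M) \;=\; 0.
\]
I would then aim to verify the Weyl-type cohomological criterion for good $(p,r)$-filtrations developed in the Kildetoft--Nakano and Sobaje work referenced in the abstract, with test modules of the form $L(\mu_0) \otimes \Delta(\mu_1)^{(r)}$ (or $\Delta(\mu_0) \otimes \Delta(\mu_1)^{(r)}$). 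The bridge from the vanishing above to this target criterion requires locating such Weyl $(p,r)$-factors inside $\St_r \otimes \Delta(\la)$ for appropriately chosen $\la$, which again reduces to the filtration behavior of $\St_r \otimes L(\mu_0)$.

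\textbf{Main obstacle.} Both implications ultimately rest on showing that $\St_r \otimes L(\mu_0)$ admits a good filtration for every $\mu_0 \in X_r$---equivalently, as the abstract points out, on Donkin's tilting-module conjecture and the lifting of projective $G_r$-modules. The paper establishes this in restricted regimes (small primes relative to the Coxeter number, small rank, or fundamental weights), but a general uniform proof appears to lie beyond currently available techniques, so any complete attempt along these lines will be genuinely conditional on settling that tensor-product question.
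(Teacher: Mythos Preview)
The statement you are attempting to prove is labeled in the paper as a \emph{conjecture}, and the paper does not claim to prove it; indeed the entire paper is devoted to partial progress on the forward implication only. So there is no ``paper's own proof'' to compare against, and your proposal should be read as a strategy outline rather than a proof. With that framing, there are two concrete problems.

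\textbf{Forward direction.} Your reduction to the sections $\St_r\otimes L(\mu_0)\otimes\nabla(\mu_1)^{(r)}$ is correct, but the step ``since $\nabla(\mu_1)^{(r)}$ trivially has [a good filtration], Mathieu's theorem reduces the problem to $\St_r\otimes L(\mu_0)$'' is wrong: Frobenius twists of induced modules do \emph{not} in general have good filtrations (e.g.\ $\nabla(\mu_1)^{(r)}$ has socle $L(p^r\mu_1)$, yet $\nabla(p^r\mu_1)$ is typically much larger). The correct bridge, as in \cite{KN} and recorded here as Theorem~\ref{T:cohom-criteria}, uses the $G_r$-projectivity of $\St_r$ and the Lyndon--Hochschild--Serre spectral sequence: one shows that $\St_r\otimes L(\mu_0)$ having a good filtration is equivalent to $\Hom_{G_r}(T(\hat\nu),L(\mu_0))^{(-r)}$ having a good filtration for all $\nu\in X_r$, and from that the extra factor $\nabla(\mu_1)^{(r)}$ is handled on the $G/G_r$ level. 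So your destination is right, but the route you describe does not get there.

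\textbf{Reverse direction.} Your sketch here is too optimistic. The paper records the hierarchy
\[
\text{Conjecture~\ref{donkinconj}} \ \Rightarrow\ \text{Conjecture~\ref{tilting}} \ \Rightarrow\ \text{Conjecture~\ref{donkinconj}}(\Rightarrow),
\]
so the reverse implication sits strictly above the ``$\St_r\otimes L(\mu_0)$ has a good filtration'' question in the current state of knowledge. Your proposed bridge---locating the test modules $L(\mu_0)\otimes\Delta(\mu_1)^{(r)}$ as summands or sections of $\St_r\otimes\Delta(\la)$---is essentially asking for the Tilting Module Conjecture plus more, and there is no known cohomological criterion that certifies a good $(p,r)$-filtration from vanishing against Weyl modules alone. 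In short, the reverse direction does not reduce to the same obstruction as the forward one; it is a genuinely harder open problem, and your outline does not supply the missing mechanism.
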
 

At the same meeting, Donkin presented another conjecture that realizes the injective hull, $Q_{r}(\lambda)$, for $L(\lambda)$ over $G_{r}$ as a tilting module, where $G_r$ denotes the $r$th Frobenius kernel of $G$. 
 
\begin{conjecture}\label{tilting}  For all $\lambda\in X_{r}$, $T(2(p^{r}-1)\rho+w_{0}\lambda)|_{G_{r}}=Q_{r}(\lambda)$ where $w_{0}$ denotes 
the long element in the Weyl group $W$. 
\end{conjecture}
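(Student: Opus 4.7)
My plan is to attack Conjecture~\ref{tilting} by analyzing the $G_r$-restriction of $T := T(2(p^r-1)\rho + w_0\lambda)$ in three stages and then reducing the remaining content to Conjecture~\ref{donkinconj}.

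First I would verify that $T|_{G_r}$ is $G_r$-projective. Since $\lambda \in X_r$ one has $-w_0\lambda \in X_r$, so $(p^r-1)\rho + w_0\lambda$ has coordinates $(p^r-1) + \langle w_0\lambda,\alpha_i^\vee\rangle \in [0,p^r-1]$ and in particular lies in $X_r \subset X_+$. Writing $2(p^r-1)\rho + w_0\lambda = (p^r-1)\rho + \nu$ with $\nu := (p^r-1)\rho + w_0\lambda \in X_+$ places the highest weight of $T$ in the ``upper'' range where Donkin-type lifting results identify $T|_{G_r}$ as a $G_r$-projective module.

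Second, I would analyze the resulting decomposition $T|_{G_r} \cong \bigoplus_{\sigma \in X_r} Q_r(\sigma)^{n_\sigma}$, with $n_\sigma = \dim\Hom_{G_r}(T,L(\sigma))$. A weight-space argument using the multiplicity-one occurrence of the highest weight $2(p^r-1)\rho + w_0\lambda$ of $T$, together with the known $G_rT$-character of $Q_r(\lambda)$ (whose highest $T$-weight is precisely this same weight), should give $n_\lambda \geq 1$. The full identification $T|_{G_r} = Q_r(\lambda)$ then requires ruling out summands $Q_r(\sigma)$ for $\sigma \neq \lambda$ and forcing $n_\lambda = 1$.

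Third and crucially, I would invoke the bridge theorems of Kildetoft--Nakano and of Sobaje referenced in the introduction, which translate this decomposition problem into the good filtration property of $\St_r \otimes L(\lambda_0)$ for $\lambda_0 \in X_r$, i.e., into the central case of Conjecture~\ref{donkinconj} singled out in the abstract. The main obstacle is precisely this good filtration question, which --- as Williamson's counterexamples to the LCF suggest --- cannot be resolved by formal means alone and depends genuinely on $p$. Accordingly, the concrete attack would mirror the regimes of the abstract: cohomological vanishing arguments (via the Donkin--Scott criterion and bounds on $\opH^i(G_r,-)$) in the range $p \geq 2h-4$, direct character and weight-multiplicity verifications for rank $\leq 2$ groups and selected rank $\leq 5$ cases, and exploitation of the tractable submodule structure of $L(\varpi_i)$ when $\lambda_0$ is a fundamental weight and $p \geq 3$.
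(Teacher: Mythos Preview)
The statement you are attempting to prove is a \emph{conjecture}, and the paper does not prove it. The paper's actual contribution is to the strictly weaker statement Conjecture~\ref{donkinconj}($\Rightarrow$), namely that $\St_r\otimes L(\lambda)$ has a good filtration for $\lambda\in X_r$, in the various regimes listed in the abstract. The paper is explicit about the gap: it remarks that ``Donkin's Tilting Module Conjecture (i.e., Conjecture~\ref{tilting}) is not known for all rank $2$ groups,'' even though the good filtration property \emph{is} established there for all rank $2$ groups.

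Your third stage contains a genuine logical error in the direction of implication. The bridge results run as follows: Kildetoft--Nakano show Conjecture~\ref{tilting} $\Rightarrow$ Conjecture~\ref{donkinconj}($\Rightarrow$), and Sobaje shows that the \emph{full} Conjecture~\ref{donkinconj} (both directions) $\Rightarrow$ Conjecture~\ref{tilting}. Neither of these gives you that the good filtration property of $\St_r\otimes L(\lambda_0)$ alone implies $T(2(p^r-1)\rho+w_0\lambda)|_{G_r}\cong Q_r(\lambda)$. Hence even granting every good filtration result in the paper --- the $p\ge 2h-4$ bound, the rank $2$ cases, the fundamental weight cases, the low-rank verifications --- you cannot deduce Conjecture~\ref{tilting} from them; the implication you invoke points the other way, or else requires the unproved ($\Leftarrow$) half of Conjecture~\ref{donkinconj}. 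Your stages one and two are standard and essentially correct as far as they go (projectivity of $T|_{G_r}$ in this weight range and $n_\lambda\ge 1$ are known), but the step of ruling out extraneous summands $Q_r(\sigma)$ is exactly where the open problem sits, and your proposed reduction does not close it.
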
 

In exciting recent developments, it has been shown how these conjectures are related. Kildetoft and Nakano \cite{KN} proved that Conjecture~\ref{tilting} implies the forward direction (i.e., the only if portion) of Conjecture~\ref{donkinconj} (which we will denote by ``Conjecture~\ref{donkinconj}($\Rightarrow$)''). Sobaje \cite{So} has proved that Conjecture~\ref{donkinconj} implies Conjecture~\ref{tilting}. It is well-known that Conjecture~\ref{donkinconj}($\Rightarrow$) is equivalent to $\text{St}_{r}\otimes L(\lambda)$ having a good filtration for all $\lambda\in X_{r}$. Combining these results, the following hierarchy of conjectures has now been established: 
\begin{center}
\text{Conjecture}~\ref{donkinconj} \ \ $\Rightarrow$ \ \  \text{Conjecture}~\ref{tilting} \ \ $\Rightarrow$ \ \ \text{Conjecture}~\ref{donkinconj}($\Rightarrow$).
\end{center}
Using different approaches, Andersen \cite{andersen01} and later Kildetoft and Nakano \cite{KN} verified Conjecture~\ref{donkinconj}($\Rightarrow$) when $p\geq 2h-2$. This paper is focused on the verification of $\text{St}_{r}\otimes L(\lambda)$ having a good filtration for all $\lambda\in X_{r}$ for many new cases. The reader should note that the 
connections to these various conjectures are both useful and striking. For example, if one discovers an example when $\text{St}_{r}\otimes L(\lambda)$ does not have 
a good filtration for some $\lambda\in X_{r}$ then Conjecture~\ref{tilting} would be false. 

It also should be mentioned that the verification of Conjecture~\ref{tilting} would prove the 40 year old Humphreys-Verma Conjecture about the existence of $G$-structures on injective indecomposable $G_{r}$-modules. Conjecture~\ref{tilting} holds for $p\geq 2h-2$ and the proof under this bound entails locating one particular $G$-summand of $\text{St}_{r}\otimes L(\lambda)$. It has become evident that, in order to prove either conjecture for all $p$, one needs to analyze all $G$-summands of $\text{St}_{r}\otimes L(\lambda)$. 

\subsection{} The paper is organized as follows. In Section 2, we summarize the basic definitions and fundamental results on good (resp. good $(p,r)$-) filtrations. 
The following section, Section 3, is devoted to developing sufficient conditions to guarantee that $\text{St}_{r}\otimes M$ has a good filtration for a rational $G$-module $M$. These sufficient conditions 
involve the mysterious Frobenius contraction functor studied by Gros and Kaneda \cite{GK17} and Andersen \cite{andersen17}. These results are used in Section 4 
to prove that $\text{St}_{r}\otimes L(\lambda)$ where $\lambda\in X_{r}$ has a good filtration for (i) $p\geq 2h-4$ and (ii) for all rank two groups. The reader should note that Donkin's Tilting Module Conjecture (i.e., Conjecture ~\ref{tilting}) is not known for all rank $2$ groups. Later in this section, $\text{St}_{r}\otimes L(\lambda)$ is shown to have a good filtration when $\lambda$ is a fundamental weight as long as one is not in the cases of $E_{7}$ and $E_{8}$ when $p=2$. 

Section 5 is devoted to verifying Conjecture~\ref{donkinconj}($\Rightarrow$) for many cases when the rank of $G$ is less than or equal to 5. In Section 6, we carefully 
analyze the type $A_{5}$, $p=2$ situation and verify the conjecture using new and detailed information. This is an important case because it is indicative of 
the cases of fundamental weights for $E_{7}$ and $E_{8}$ when $p=2$, where the conjecture is not yet verified. At the end of the paper in Section 7, we consider the question of whether $\St_r\otimes k[G_{r}]$ has a good filtration, where $k[G_r]$ is regarded as a $G$-module by the conjugation action. 

\subsection{Acknowledgements} The authors would like to thank Henning Andersen, Stephen Donkin and James Humphreys for their comments on an earlier version of this manuscript.


\section{Preliminaries} 

\subsection{Notation}
\noindent 
Throughout this paper, the following basic notation will be used. See \cite{rags} for a general overview of terminology.  
\begin{enumerate}

\item[(1)] $k$: an algebraically closed field of characteristic $p> 0$.

\item[(2)] $G$: a simple, simply connected algebraic group scheme over $k$, defined over $\mathbb{F}_p$ (the assumption of $G$ being simple is for convenience and the results easily generalize to $G$ reductive).

\item[(3)] $\operatorname{Dist}(G)$: the distribution algebra of $G$.

\item[(4)] ${\mathfrak g} = \operatorname{Lie}(G)$: the Lie algebra of $G$.

\item[(5)] $T$: a maximal split torus in $G$. 

\item[(6)] $\Phi$: the corresponding (irreducible) root system associated to $(G,T)$. When referring to short and long roots, when a root system has roots of only one length, all roots shall be considered as both short and long.

\item[(7)] $\Phi^{\pm}$: the positive (respectively, negative) roots.  

\item[(8)] $S = \{\alpha_1,\alpha_2,\dots,\alpha_n\}$: an ordering of the simple roots.

\item[(9)]  $B$: the Borel subgroup containing $T$ corresponding to the negative roots. 

\item[(10)] $U$: the unipotent radical of $B$.

\item[(11)] $\mathbb E$: the Euclidean space spanned by $\Phi$ with inner product $\langle\,,\,\rangle$ normalized 
so that $\langle\alpha,\alpha\rangle=2$ for $\alpha \in \Phi$ any short root.

\item[(12)] $\alpha^\vee=2\alpha/\langle\alpha,\alpha\rangle$: the coroot of $\alpha\in \Phi$.

\item[(13)] $X=X(T)=\mathbb Z \varpi_1\oplus\cdots\oplus{\mathbb Z}\varpi_n$: the weight lattice, where the 
fundamental dominant weights $\varpi_i\in{\mathbb E}$ are defined by $\langle\varpi_i,\alpha_j^\vee\rangle=\delta_{ij}$, $1\leq i,j\leq n$.

\item[(14)] $X_{+}=X(T)_{+}={\mathbb N}\varpi_1+\cdots+{\mathbb N}\varpi_n$: the dominant weights.

\item[(15)] $X_{r}=X_{r}(T)=\{\lambda\in X(T)_+: 0\leq \langle\lambda,\alpha^\vee\rangle<p^{r},\,\,\forall \alpha\in S \}$: the set of $p^{r}$-restricted dominant weights.

\item[(16)] $F:G\rightarrow G$: the Frobenius morphism. 

\item[(17)] $G_r=\text{ker }F^{r}$: the $r$th Frobenius kernel of $G$. Similarly, $B_r$, $T_r$, and $U_r$ denote the kernels of the restriction of $F^r$ to $B$, $T$, and $U$ respectively.

\item[(18)] Set $G^{(r)}=G/G_{r}$ and $B^{(r)} = B/B_r$.

\item[(19)] $W$: the Weyl group of $\Phi$. 

\item[(20)] $w_{0}$: the longest element of the Weyl group. 

\item[(21)] $\rho$: the Weyl weight defined by $\rho=\frac{1}{2}\sum_{\alpha\in\Phi^+}\alpha$.

\item[(22)] $\alpha_0$: the maximal short root.

\item[(23)] $h$: the Coxeter number of $\Phi$, given by $h=\langle\rho,\alpha_0^{\vee} \rangle+1$.

\item[(24)] $\leq$ on $X(T)$: a partial ordering of weights, for $\lambda, \mu \in X(T)$, $\mu\leq \lambda$ if and only if $\lambda-\mu$ is a linear combination 
of simple roots with non-negative integral coefficients. 

\item[(25)] $M^{(r)}$:  the module obtained by composing the underlying representation for 
a rational $G$-module $M$ with $F^{r}$.

\item[(26)] $M^*$: the $k$-linear dual module for a rational $G$-module $M$.

\item[(27)] $\la^* := -w_0\la$, $\la \in X$: the dual weight.

\item[(28)] $^{\tau}M$: the dual module $M^*$ for a rational $G$-module $M$ with action composed with the anti-automorphism $\tau: G \to G$ that interchanges positive and negative root subgroups.  

\item[(29)] $\nabla(\lambda) := \operatorname{ind}_B^G\lambda$, $\lambda\in X_{+}$: the induced module whose character is provided by Weyl's character formula.  

\item[(30)] $\Delta(\lambda)$, $\lambda\in X_{+}$: the Weyl module of highest weight $\lambda$. Here $\Delta(\lambda)\cong \nabla(-w_{0}(\lambda))^*$.

\item[(31)] $L(\lambda)$, $\la \in X_{+}$: the simple finite dimensional $G$-module with highest weight $\lambda$. 

\item[(32)] $T(\lambda)$, $\la \in X_{+}$: the indecomposable finite dimensional tilting $G$-module with highest weight $\lambda$. 

\item[(33)] $\nabla^{(p,r)}(\lambda) := L(\lambda_0)\otimes \nabla(\lambda_1)^{(r)}$, $\la \in X_{+}$, where $\lambda = \lambda_0 + p^r\lambda_1$ with $\lambda_0\in X_r$ and $\lambda_1\in X_+$.

\item[(34)] $\Delta^{(p,r)}(\lambda) := T(\hat{\lambda}_0)\otimes \Delta(\lambda_1)^{(r)}$, $\la \in X_{+}$, where $\lambda = \lambda_0 + p^r\lambda_1$ with $\lambda_0\in X_r$ and $\lambda_1\in X_+$. Here $\hat{\lambda}_0=2(p^{r}-1)\rho+w_{0}\lambda_{0}$. 

\item[(35)] $\St_r := L((p^r - 1)\rho)$: the $r$th Steinberg module.

\item[(36)] $Q_r(\la)$, $\la \in X_r$: the injective hull (or equivalently, projective cover) of $L(\la)|_{G_r}$ as a $G_r$-module.

\end{enumerate} 

\subsection{Important $G$-Filtrations} Let $M$ be a rational $G$-module. In this paper a {\em $G$-filtration} for $M$ is an increasing sequence of $G$-submodules of $M$: $M_{0}\subseteq M_{1} \subseteq \dots \subseteq M$ such that $\cup_{i}M_{i}=M$. 
We now present the definition of a good filtration and a good $(p,r)$-filtration. 

\begin{definition} Let $M$ be a $G$-module 
\begin{itemize}
\item[(a)] $M$ has a {\em good filtration} if and only if it has a $G$-filtration such that for each $i$, $M_{i+1}/M_{i}\cong \nabla(\lambda_{i})$ where $\lambda_{i}\in X_+$. 
\item[(b)] $M$ has a {\em good $(p,r)$-filtration} if and only if it has a $G$-filtration such that for each $i$, $M_{i+1}/M_{i}\cong \nabla^{(p,r)}(\lambda_{i})$ where $\lambda_{i}\in X_+$.
\item[(c)] If $M$ has a {\em good $(p,1)$-filtration}, then we say that $M$ has a good $p$-filtration.
\end{itemize}
\end{definition} 

\subsection{Good Filtrations: Cohomological Criterion} The following well-known result due to Donkin \cite[Cor. 1.3]{donkin81} and Scott \cite{Sc} 
(cf. {\cite[Proposition II.4.16]{rags}}) gives a very useful criterion to prove the existence of good filtrations.

\begin{theorem}\label{T:cohcrit} Let $M$ be a $G$-module. The following are equivalent:
\begin{itemize} 
\item[(a)] $M$ has a good filtration. 
\item[(b)] $\Ext_G^1(\Delta(\mu),M) = 0$ for all $\mu\in X_+$.
\item[(c)] $\Ext_G^n(\Delta(\mu),M) = 0$ for all $\mu\in X_+$, $n\geq 1$.
\end{itemize}
\end{theorem}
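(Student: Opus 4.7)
The plan is to establish the cyclic chain of implications $(c) \Rightarrow (b) \Rightarrow (a) \Rightarrow (c)$. The first implication is immediate by setting $n=1$, so the real content lies in $(a) \Rightarrow (c)$ and $(b) \Rightarrow (a)$.

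For $(a) \Rightarrow (c)$, I would first handle the single-section case: the vanishing $\Ext^n_G(\Delta(\mu), \nabla(\lambda)) = 0$ for all $\lambda, \mu \in X_+$ and $n \geq 1$. Kempf's vanishing theorem gives $R^i\ind_B^G k_\lambda = 0$ for $\lambda \in X_+$ and $i > 0$, so the Grothendieck spectral sequence for the composition of $\Hom_G(\Delta(\mu), -)$ with $\ind_B^G$ collapses to yield $\Ext^n_G(\Delta(\mu), \nabla(\lambda)) \cong \Ext^n_B(\Delta(\mu), k_\lambda)$, and the right-hand side vanishes by a weight-space analysis on an injective $B$-resolution of $k_\lambda$. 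With the single-section statement in hand, induction on the length of a finite good filtration (via the long exact sequence of $\Ext$) handles finite good filtrations, and a direct-limit argument—using that any $\Ext^n_G(\Delta(\mu), -)$ class is supported on a finite sub-filtration because $\Delta(\mu)$ is finitely presented—handles the infinite case.

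For $(b) \Rightarrow (a)$, I would apply Zorn's lemma to the poset of $G$-submodules $N \subseteq M$ admitting a good filtration, ordered by inclusion. Unions of chains again admit a good filtration (by concatenating the filtrations), so a maximal such $N$ exists. Assume for contradiction $N \neq M$. The long exact sequence for $0 \to N \to M \to M/N \to 0$, together with $\Ext^1_G(\Delta(\mu), M) = 0$ (hypothesis) and $\Ext^2_G(\Delta(\mu), N) = 0$ (from the already-established $(a)\Rightarrow(c)$ applied to $N$), forces $\Ext^1_G(\Delta(\mu), M/N) = 0$ for all $\mu \in X_+$. Choose a weight $\lambda$ of $M/N$ that is maximal in the dominance order. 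If one can produce an embedding $\nabla(\lambda) \hookrightarrow M/N$, pulling it back to $M$ yields a submodule strictly containing $N$ and still possessing a good filtration, contradicting maximality.

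The main obstacle is the construction of this embedding $\nabla(\lambda) \hookrightarrow M/N$. A nonzero $v \in (M/N)_\lambda$ is automatically a highest-weight vector (by maximality of $\lambda$), producing a $G$-surjection $\Delta(\lambda) \twoheadrightarrow \langle G \cdot v\rangle \subseteq M/N$ whose image has $L(\lambda)$ in its socle. Conceptually, $\nabla(\lambda)$ is the injective hull of $L(\lambda)$ in the truncated subcategory of modules whose composition factors $L(\mu)$ all satisfy $\mu \leq \lambda$, and the Ext-vanishing hypothesis is precisely what makes $M/N$ behave injectively on that subcategory: at each stage the obstruction to extending a partial lift of the socle through the next layer of $\nabla(\lambda)$ is an $\Ext^1_G(\Delta(\mu), M/N)$ class with $\mu \leq \lambda$, and vanishes by hypothesis. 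Assembling these extensions into a single embedding of the whole of $\nabla(\lambda)$—managing transfinite steps when $\nabla(\lambda)$ is infinite-dimensional and keeping careful track of which kernels appear at each stage so that only the given Ext-vanishings are invoked—is the delicate technical heart of the argument; once it is in place the rest of the proof is organizational.
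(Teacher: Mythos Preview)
The paper does not supply its own proof of this theorem; it is quoted as a well-known result of Donkin and Scott, with a pointer to \cite[Proposition II.4.16]{rags}. Your outline follows the standard argument and is structurally sound, but two points deserve tightening. First, $\nabla(\lambda)=\ind_B^G\lambda$ is always finite-dimensional (as $G/B$ is projective), so the worry about ``managing transfinite steps when $\nabla(\lambda)$ is infinite-dimensional'' is a misconception; extending an embedding $L(\lambda)\hookrightarrow M/N$ to one of $\nabla(\lambda)$ is a finite iteration, and one may begin from any simple submodule $L(\lambda)\subseteq M/N$ (which exists by local finiteness) rather than from a globally maximal weight, which need not exist. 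Second, and more substantively, the claim that $\Ext^n_B(\Delta(\mu),k_\lambda)$ vanishes ``by a weight-space analysis on an injective $B$-resolution of $k_\lambda$'' glosses over the real content of $(a)\Rightarrow(c)$: a naive d\'evissage of $\Delta(\mu)$ into one-dimensional $B$-modules does not give this, since for instance $H^1(B,k_{-\alpha})\neq 0$ for a simple root $\alpha$. The usual treatment (see \cite[II.4.13]{rags}) handles this step via Serre duality on $G/B$, or equivalently by also invoking Frobenius reciprocity with respect to the opposite Borel.
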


\subsection{Good Filtrations: Tensoring with the Steinberg} Kildetoft and Nakano \cite[Theorem 9.2.3]{KN} gave necessary and sufficient conditions for $\St_r\otimes M$ to admit a  good filtration (cf. \cite[Proposition 2.7]{andersen01}).

\begin{theorem} \label{T:cohom-criteria} Let $M$ be a $G$-module with $\dim \operatorname{Hom}_{G}(\Delta^{(p,r)}(\lambda),M)< \infty$ for 
all $\lambda \in X_{+}$. The following are equivalent:
\begin{itemize} 
\item[(a)] $\operatorname{St}_{r}\otimes M$ has a good filtration. 
\item[(b)] $\Hom_{G_{r}}(T(\hat{\mu}),M)^{(-r)}$ has a good filtration for all $\mu\in X_{r}$. 
\item[(c)] $\Ext_G^n(\Delta^{(p,r)}(\lambda),M) = 0$ for all $\lambda\in X_+$, $n\geq 1$.
\item[(d)] $\Ext_G^1(\Delta^{(p,r)}(\lambda),M) = 0$ for all $\lambda\in X_+$.
\end{itemize} 
\end{theorem}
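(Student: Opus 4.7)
The plan is to establish (c) $\Leftrightarrow$ (d) $\Leftrightarrow$ (b) first via cohomological methods, and then (a) $\Leftrightarrow$ (b) via a filtration argument involving the tilting modules $T(\hat{\mu})$. The implication (c) $\Rightarrow$ (d) is immediate, and (d) $\Rightarrow$ (c) follows by dimension shifting: I would take $\Delta$-filtered projective resolutions of $\Delta(\lambda_1)$ and tensor with $T(\hat{\lambda}_0)$ (and Frobenius-twist) to obtain analogous $\Delta^{(p,r)}$-filtered sequences, enabling induction on the cohomological degree.

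The cohomological heart is the identification, for $\lambda = \lambda_0 + p^r \lambda_1$ with $\lambda_0 \in X_r$,
\[
\Ext^n_G(\Delta^{(p,r)}(\lambda), M) \;\cong\; \Ext^n_G\bigl(\Delta(\lambda_1),\,\Hom_{G_r}(T(\hat{\lambda}_0), M)^{(-r)}\bigr).
\]
I would prove this by combining tensor--hom adjunction with the Lyndon--Hochschild--Serre spectral sequence for $G_r \trianglelefteq G$ applied to $T(\hat{\lambda}_0)^* \otimes M$. The crucial input is that $T(\hat{\lambda}_0)|_{G_r}$ is $G_r$-injective for every $\lambda_0 \in X_r$, an unconditional fact due to Donkin, so that $H^{\geq 1}(G_r, T(\hat{\lambda}_0)^* \otimes M) = 0$ and the spectral sequence collapses. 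Untwisting the residual $G/G_r$-action by $(-r)$ yields the formula. The finiteness hypothesis keeps each $\Hom_{G_r}(T(\hat{\mu}), M)^{(-r)}$ in the scope of Theorem~\ref{T:cohcrit}, and (b) $\Leftrightarrow$ (d) (respectively (b) $\Leftrightarrow$ (c)) then follows at once by applying Theorem~\ref{T:cohcrit} to each multiplicity module as $\lambda_1$ ranges over $X_+$ (with $\mu = \lambda_0$ fixed).

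The step I expect to be the main obstacle is (a) $\Leftrightarrow$ (b), which requires a filtration argument rather than pure cohomological vanishing. I would construct a $G$-filtration of $\St_r \otimes M$ whose successive quotients are of the form $T(\hat{\mu}) \otimes \Hom_{G_r}(T(\hat{\mu}), M)^{(-r)}$ for $\mu \in X_r$, using again the $G_r$-injectivity of $T(\hat{\mu})$ so that the multiplicity spaces carry the expected $G$-structure after Frobenius untwisting. Given such a description, (b) $\Rightarrow$ (a) because each $T(\hat{\mu})$ is tilting (hence has a good filtration) and good filtrations are closed under tensor products and extensions. Conversely, (a) $\Rightarrow$ (b) proceeds by extracting each multiplicity space via the exact functor $\Hom_{G_r}(T(\hat{\mu}), -)^{(-r)}$ and verifying that it inherits a good filtration from $\St_r \otimes M$.
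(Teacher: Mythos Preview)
The paper does not give its own proof of this theorem; it is quoted from \cite[Theorem~9.2.3]{KN} (cf.\ also \cite[Proposition~2.7]{andersen01}). So your proposal is being compared against the argument in that reference rather than anything in the present paper.

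Your treatment of (b)$\Leftrightarrow$(c)$\Leftrightarrow$(d) is correct and is essentially the standard argument: the Lyndon--Hochschild--Serre spectral sequence collapses because each $T(\hat{\lambda}_0)$ is $G_r$-injective (indeed $\hat{\lambda}_0=(p^r-1)\rho+\nu$ with $\nu\in X_r$, so $T(\hat{\lambda}_0)$ is a $G$-summand of $\St_r\otimes T(\nu)$), yielding the identification
\[
\Ext^n_G(\Delta^{(p,r)}(\lambda),M)\cong \Ext^n_G\bigl(\Delta(\lambda_1),\Hom_{G_r}(T(\hat{\lambda}_0),M)^{(-r)}\bigr),
\]
after which Theorem~\ref{T:cohcrit} gives the equivalences.

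The gap is in your (a)$\Leftrightarrow$(b). The filtration of $\St_r\otimes M$ you propose, with layers $T(\hat{\mu})\otimes\Hom_{G_r}(T(\hat{\mu}),M)$, is \emph{not} available in general: a dimension count shows that such a filtration forces $T(\hat{\mu})|_{G_r}\cong Q_r(\mu)$ for every $\mu\in X_r$, which is precisely Conjecture~\ref{tilting}. Likewise, for (a)$\Rightarrow$(b) you apply $\Hom_{G_r}(T(\hat{\mu}),-)^{(-r)}$ to $\St_r\otimes M$, but that produces $\Hom_{G_r}(T(\hat{\mu}),\St_r\otimes M)^{(-r)}$, not $\Hom_{G_r}(T(\hat{\mu}),M)^{(-r)}$; these are different $G$-modules and there is no evident way to pass from one to the other.

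The route that actually works (and is the one used in \cite{KN}, and reflected in the paper's proof of Theorem~\ref{T:Stblockcond}) avoids any such filtration. One links (a) and (d) directly by exploiting the unconditional fact, due to Pillen \cite{pillen93}, that $T(\hat{\mu})$ is a $G$-direct summand of $\St_r\otimes\Delta(\nu)$ (or $\St_r\otimes L(\nu)$) for a suitable $\nu\in X_r$, together with the isomorphism $\St_r\otimes\Delta(\sigma)^{(r)}\cong\Delta((p^r-1)\rho+p^r\sigma)$. These two ingredients let one move between vanishing of $\Ext^1_G(\Delta(\tau),\St_r\otimes M)$ and vanishing of $\Ext^1_G(\Delta^{(p,r)}(\lambda),M)$ by summand and adjunction arguments, without ever needing the layer-by-layer decomposition you describe.
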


Observe that if Conjecture~\ref{donkinconj} holds then Theorem \ref{T:cohom-criteria} would give cohomological criteria for a $G$-module $M$ to 
admit a good $(p,r)$-filtration.


\section{Good Filtrations on $\St_{r}\otimes M$} 

\subsection{} We will first introduce an important class of functors via the $r$th-Steinberg module that 
sends $G$-modules to $G/G_{r}$-modules. For $\mu\in X_{r}$ and a rational $G$-module $M$, set 
\begin{equation} 
{\mathcal F}_{\mu}(M)=\text{Hom}_{G_{r}}(k,\text{St}_{r}\otimes \nabla(\mu)\otimes M) \cong \Hom_{G_r}(k,\ind_{B}^{G}(\St_{r}\otimes\mu\otimes M)),
\end{equation} 
where the latter isomorphism follows from the tensor identity.
The functor ${\mathcal F}_{\mu}$ is an exact functor from $\text{Mod}(G)\rightarrow \text{Mod}(G/G_{r})$. 
Exactness of ${\mathcal F}_{\mu}$  follows from the fact that $\St_{r}$ is injective over $G_r$, and so $\Ext^i_{G_r}(k,\St_r\otimes\nabla(\mu)\otimes M) = 0$ for $i > 0$. 
We will call these functors {\em generalized Frobenius contraction functors}. When $\mu=(p^r-1)\rho$ these 
functors were introduced by Gros and Kaneda \cite{GK17} and later investigated by Andersen \cite{andersen17}. 

\subsection{} The following theorem demonstrates that the functor ${\mathcal F}_{\mu}$ can be expressed in terms of 
induction from $B/B_{r}$ to $G/G_{r}$.  Note that (for $\mu \in X_+$ and a $G$-module $M$), as $B/B_r$-modules,
$$
\Hom_{B_r}(k,\St_{r}\otimes \mu \otimes M) \cong \Hom_{U_r}(k,\St_{r}\otimes\mu\otimes M)^{T_r} \cong 
[\mu-(p^{r}-1)\rho \otimes M]^{T_{r}},
$$
as $\St_r^{U_r} = -(p^r - 1)\rho$ (cf. \cite[II.10.11]{rags}).

\begin{theorem}\label{T:functorviainduction} Let $\mu\in X_{+}$. Then 
\begin{itemize} 
\item[(a)] ${\mathcal F}_{\mu}(M)=\operatorname{Hom}_{G_{r}}(k,\operatorname{ind}_{B}^{G} (\St_{r}\otimes \mu\otimes M))
\cong \operatorname{ind}_{B/B_{r}}^{G/G_{r}}\left([\mu-(p^{r}-1)\rho \otimes M]^{T_{r}}\right)$.  
\item[(b)] $R^{i}\operatorname{ind}_{B/B_{r}}^{G/G_{r}}\left([\mu-(p^{r}-1)\rho \otimes M]^{T_{r}}\right)=0$ for $i>0$. 
\end{itemize} 
The $B/B_{r}$-structures are given by the isomorphism 
$$[\mu-(p^{r}-1)\rho \otimes M]^{T_{r}}\cong \operatorname{Hom}_{B_{r}}(k,\St_{r}\otimes \mu \otimes M),$$ 
\end{theorem}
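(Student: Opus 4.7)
The plan is to establish (a) via Frobenius reciprocity and Yoneda's lemma, and then to obtain (b) by comparing two Grothendieck spectral sequences that converge to the derived functors of a common composite.

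For (a), I would first prove the natural identity $(\ind_B^G N)^{G_r} \cong \ind_{B/B_r}^{G/G_r}(N^{B_r})$ for every $B$-module $N$. For any $G/G_r$-module $V$, one has the chain of natural bijections
\[
\begin{aligned}
\Hom_{G/G_r}(V, (\ind_B^G N)^{G_r}) &\cong \Hom_G(V, \ind_B^G N) \cong \Hom_B(V, N) \\
&\cong \Hom_{B/B_r}(V, N^{B_r}) \cong \Hom_{G/G_r}(V, \ind_{B/B_r}^{G/G_r} N^{B_r}),
\end{aligned}
\]
using in order: the trivial $G_r$-action on $V$, Frobenius reciprocity for $B \subset G$, the trivial $B_r$-action on $V$, and Frobenius reciprocity for $B/B_r \subset G/G_r$. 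Yoneda then yields the desired identity; specializing to $N = \St_r \otimes \mu \otimes M$ and invoking the preceding calculation $N^{B_r} \cong [\mu - (p^r-1)\rho \otimes M]^{T_r}$ gives (a).

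For (b), I would set $\Phi_1 := (-)^{G_r} \circ \ind_B^G$ and $\Phi_2 := \ind_{B/B_r}^{G/G_r} \circ (-)^{B_r}$, both viewed as left exact functors from $B$-modules to $G/G_r$-modules. The natural isomorphism established in (a) gives $\Phi_1 \cong \Phi_2$, so their right derived functors agree. For $N = \St_r \otimes \mu \otimes M$, the Grothendieck spectral sequence for $\Phi_1$ has $E_2^{i,j} = H^i(G_r, R^j \ind_B^G N)$. Kempf's vanishing theorem (applicable because $\mu \in X_+$) combined with the tensor identity gives $R^j \ind_B^G N = 0$ for $j > 0$, and $R^0 \ind_B^G N \cong \St_r \otimes \nabla(\mu) \otimes M$ is $G_r$-injective (since $\St_r$ is), so it has no higher $G_r$-cohomology. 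Hence $R^n \Phi_1(N) = 0$ for $n > 0$. The Grothendieck spectral sequence for $\Phi_2$ has $E_2^{i,j} = R^i \ind_{B/B_r}^{G/G_r}(H^j(B_r, N))$. The $B_r$-injectivity of $\St_r|_{B_r}$ (see \cite[II.10]{rags}), hence of $\St_r \otimes \mu \otimes M|_{B_r}$, forces $H^j(B_r, N) = 0$ for $j > 0$. The spectral sequence therefore collapses onto the $j = 0$ row, giving $R^i \ind_{B/B_r}^{G/G_r}(N^{B_r}) = R^i \Phi_2(N) = R^i \Phi_1(N) = 0$ for $i > 0$, which is (b).

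The main technical obstacle will be verifying the hypotheses of the two Grothendieck spectral sequences. For $\Phi_1$, I would argue that $\ind_B^G$ preserves injectives (it has the exact left adjoint of restriction), and every $G$-injective is $G_r$-injective (being a summand of a module of the form $k[G] \otimes V$, with $k[G]$ being $G_r$-free). For $\Phi_2$, the functor $(-)^{B_r}$ is the right adjoint to the exact forgetful functor from $B/B_r$-modules to $B$-modules, hence preserves injectives, and an injective $B/B_r$-module is trivially acyclic for any left exact functor.
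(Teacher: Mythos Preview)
Your proposal is correct and uses the same underlying machinery as the paper---the two composite functors $\Phi_1=(-)^{G_r}\circ\ind_B^G$ and $\Phi_2=\ind_{B/B_r}^{G/G_r}\circ(-)^{B_r}$, their Grothendieck spectral sequences, Kempf vanishing, and the $G_r$- and $B_r$-injectivity of $\St_r$. The organization differs in two respects. First, for (a) you give a direct Yoneda/Frobenius-reciprocity argument, which is cleaner; the paper instead reads off (a) by setting $i=0$ in the identification $R^i\ind_{B/B_r}^{G/G_r}\Hom_{B_r}(\St_r,\mu\otimes M)\cong\Hom_{G_r}(k,R^i\ind_B^G(\St_r\otimes\mu\otimes M))$ obtained from the two collapsed spectral sequences. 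Second, for (b) the paper collapses the $\hat E$-spectral sequence only in the $i$-direction (via $G_r$-injectivity), keeps the abutment as $\Hom_{G_r}(k,R^\bullet\ind_B^G(-))$, and invokes Kempf only at the very end; you instead feed Kempf into the $\Phi_1$-spectral sequence immediately to get $R^n\Phi_1(N)=0$ outright. Both orderings work. Your explicit verification of the spectral-sequence hypotheses is a nice touch; the paper simply cites \cite[Proposition~3.1]{andersen-jantzen84} for this.
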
 

\begin{proof} Consider the following isomorphic functors: 
$$ {\mathcal F}_{1}(-)=\left(\text{ind}_{B}^{G}(-)\right)^{G_{r}}$$
$$ {\mathcal F}_{2}(-)=\text{ind}_{B/B_{r}}^{G/G_{r}}\left((-)^{B_{r}}\right).$$ 
As each arises as a composition, we obtain two spectral sequences, whose abutments agree (since the functors are isomorphic, cf. \cite[Proposition 3.1]{andersen-jantzen84}):
\begin{equation*} 
\hat{E}_{2}^{i,j}=\text{Ext}^{i}_{G_{r}}(k,R^{j}\text{ind}_{B}^{G}(\St_{r}\otimes \mu \otimes M))\Rightarrow (R^{i+j}{\mathcal F}_{1})(\St_{r}\otimes \mu\otimes M)
\end{equation*} 
\begin{equation*} 
E_{2}^{i,j}=R^{i}\text{ind}^{G/G_{r}}_{B/B_{r}} \text{Ext}^{j}_{B_{r}}(\St_{r},\mu\otimes M)\Rightarrow (R^{i+j}{\mathcal F}_{2})(\St_{r}\otimes\mu\otimes M). 
\end{equation*} 
By the generalized tensor identity, $R^j\ind_{B}^{G}(\St_r\otimes\mu\otimes M) \cong \St_r\otimes R^j\ind_{B}^{G}(\mu\otimes M)$, which is injective over $G_r$, since $\St_{r}$ is.  Hence, the first spectral sequence collapses. Precisely, $\hat{E}_2^{i,j} = 0$ for $i > 0$.  One can then identify the abutment and 
combine this with the second spectral sequence to obtain 
\begin{equation*} 
E_{2}^{i,j}=R^{i}\text{ind}^{G/G_{r}}_{B/B_{r}} \text{Ext}^{j}_{B_{r}}(\St_{r},\mu\otimes M)\Rightarrow \text{Hom}_{G_{r}}(k,R^{i+j}\text{ind}_{B}^{G}(\St_{r}\otimes \mu\otimes M)).
\end{equation*} 
This spectral sequence collapses and yields
$$R^{i}\text{ind}^{G/G_{r}}_{B/B_{r}} \text{Hom}_{B_{r}}(\St_{r},\mu\otimes M)\cong \text{Hom}_{G_{r}}(k,R^{i}\text{ind}_{B}^{G}(\St_{r}\otimes \mu\otimes M)).$$ 
The statement of (a) follows by setting $i=0$. From the generalized tensor identity and Kempf's Vanishing Theorem, one has 
$$R^{i}\text{ind}_{B}^{G}(\St_{r}\otimes \mu\otimes M) \cong [R^{i}\text{ind}_{B}^{G}\ \mu]\otimes  \St_{r}\otimes M=0$$ 
when $i>0$, which proves (b). 
\end{proof} 

From Theorem \ref{T:functorviainduction}, it is interesting to note that, for any $\mu \in X_{r}$, the $B/B_{r}$-module
$$[\mu-(p^{r}-1)\rho \otimes M]^{T_{r}}\cong \operatorname{Hom}_{B_{r}}(k,\St_{r}\otimes \mu \otimes M)$$ 
is acyclic with respect to the induction functor $\text{ind}_{B^{(r)}}^{G^{(r)}}(-)$. 

\subsection{} Given $\mu \in X_r$, let $\mu_{(r)} := (p^r - 1)\rho - \mu \in X_r$.  Note that the correspondence $\mu$ with $\mu_{(r)}$ gives a bijection on $X_r$.  In particular, in Theorem \ref{T:cohom-criteria}(b), $\mu$ may be replaced with $\mu_{(r)}$.  For any $\mu\in X$, 
let $\text{pr}_{\mu}$ be the functor that sends a rational $G$-module to the component in the (linkage) block defined by $\mu$ 
(cf. \cite[II. 7.3]{rags}). The next result gives conditions using the projection and generalized Frobenius contraction functors to ensure that $\text{St}_{r}\otimes M$ has a good filtration. 

\begin{theorem}\label{T:Stblockcond} Let $M$ be a rational $G$-module with $\dim \operatorname{Hom}_{G}(\Delta^{(p,r)}(\lambda),M)< \infty$ for 
all $\lambda \in X_{+}$. 
\begin{itemize} 
\item[(a)] If $\operatorname{pr}_{(p^{r}-1)\rho}(L(\mu)\otimes M)$ has a good filtration for all $\mu\in X_{r}$, then 
$\St_{r}\otimes M$ has a good filtration. 
\item[(b)] If ${\mathcal F}_{\mu}(M)=\operatorname{ind}_{B/B_{r}}^{G/G_{r}}\left([\mu-(p^{r}-1)\rho \otimes M]^{T_{r}}\right)$  
has a good filtration for all $\mu\in X_{r}$, where the $B/B_{r}$-structure is given by the isomorphism 
$$[\mu-(p^{r}-1)\rho \otimes M]^{T_{r}}\cong \operatorname{Hom}_{B_{r}}(k,\St_{r}\otimes \mu \otimes M),$$ 
then $\St_{r}\otimes M$ has a good filtration. 
\end{itemize}
\end{theorem}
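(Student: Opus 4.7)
The plan is to invoke the cohomological criterion of Theorem~\ref{T:cohcrit}, reducing the statement ``$\St_r \otimes M$ has a good filtration'' to the vanishing $\Ext^1_G(\Delta(\la), \St_r \otimes M) = 0$ for all $\la \in X_+$. Using $\Delta(\la)^* \cong \nabla(\la^*)$, this is equivalent to $H^1(G, \nabla(\la^*) \otimes \St_r \otimes M) = 0$.

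For part (b), the key move is to apply the Lyndon--Hochschild--Serre spectral sequence for $G_r \triangleleft G$ to the module $\nabla(\la^*) \otimes \St_r \otimes M$. Because $\St_r$ is projective-injective over $G_r$, any tensor product containing $\St_r$ as a factor is $G_r$-injective, so $H^j(G_r,-) = 0$ for $j > 0$ and the spectral sequence collapses to yield
\[
H^i(G, \nabla(\la^*) \otimes \St_r \otimes M) \cong H^i(G/G_r, \mathcal{F}_{\la^*}(M)).
\]
Combining with the induction isomorphism of Theorem~\ref{T:functorviainduction} and Frobenius reciprocity (valid because $R^i\operatorname{ind}_{B/B_r}^{G/G_r} = 0$ on $V_{\la^*} := [(\la^* - (p^r-1)\rho) \otimes M]^{T_r}$), this reads $H^i(B^{(r)}, V_{\la^*})$. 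Writing $\la^* = \nu_0 + p^r \nu_1$ with $\nu_0 \in X_r$ and $\nu_1 \in X_+$, a direct computation gives $V_{\la^*} \cong (p^r\nu_1) \otimes V_{\nu_0}$ as $B^{(r)}$-modules, since the $T_r$-invariants depend on weights only mod $p^r$.

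To obtain the required vanishing from the hypothesis (which gives a good filtration on $\mathcal{F}_{\nu_0}(M)$ for the restricted weight $\nu_0$), I would use the highest-weight embedding to form the short exact sequence $0 \to p^r\nu_1 \to \nabla(\nu_1)^{(r)}|_{B^{(r)}} \to Q \to 0$ of $B^{(r)}$-modules, tensor with $V_{\nu_0}$, and apply $H^\bullet(B^{(r)},-)$. The middle term, via the tensor identity $\operatorname{ind}_{B^{(r)}}^{G^{(r)}}(E|_{B^{(r)}} \otimes V_{\nu_0}) \cong E \otimes \mathcal{F}_{\nu_0}(M)$ for a $G^{(r)}$-module $E$, becomes $H^i(G^{(r)}, \nabla(\nu_1)^{(r)} \otimes \mathcal{F}_{\nu_0}(M))$; Mathieu's theorem (tensor products of good-filtered modules have a good filtration) together with Kempf vanishing forces this to be zero for $i > 0$. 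The cokernel $Q$, whose weights are strictly below $p^r\nu_1$, is handled by induction on $\nu_1$.

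For part (a), the approach is parallel but exploits the block decomposition $\St_r \otimes M = \bigoplus_\chi \operatorname{pr}_\chi(\St_r \otimes M)$. The $(p^r-1)\rho$-block component is directly controlled by the hypothesis applied with $\mu = (p^r-1)\rho \in X_r$, which gives $\operatorname{pr}_{(p^r-1)\rho}(\St_r \otimes M)$ a good filtration. For components in other blocks $\chi$, one uses the hypothesis for varying $\mu \in X_r$, noting that by the Steinberg tensor product theorem a composition factor $L(\la_0) \otimes L(\la_1)^{(r)}$ of $\St_r \otimes M$ lies in the $(p^r-1)\rho$-block exactly when $\la_0 = (p^r-1)\rho$; other composition factors are reachable by block projection of $L(\mu) \otimes M$ for suitable $\mu \in X_r$ via the linkage principle. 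An alternative route is to show that the hypothesis of (a) directly implies that of (b), reducing (a) to the previous case. The main obstacle is the inductive step in (b) --- controlling the cokernel $Q$ and matching weights carefully through the Frobenius identification $B^{(r)} \cong B$ --- and, for (a), identifying how the block projections for different $\mu \in X_r$ jointly determine all block components of $\St_r \otimes M$.
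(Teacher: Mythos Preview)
Your setup for part (b) is sound up through the identification $H^i(G,\nabla(\la^*)\otimes\St_r\otimes M)\cong H^i(G^{(r)},\mathcal{F}_{\la^*}(M))\cong H^i(B^{(r)},V_{\la^*})$ and the factorisation $V_{\la^*}\cong (p^r\nu_1)\otimes V_{\nu_0}$. The induction that follows, however, does not work. First, the short exact sequence is oriented wrongly: in the paper's conventions $B$ is the \emph{negative} Borel, so the highest weight line $p^r\nu_1$ is a $B^{(r)}$-\emph{quotient} of $\nabla(\nu_1)^{(r)}$ (via the evaluation map), not a submodule. The correct sequence is $0\to K\to \nabla(\nu_1)^{(r)}|_{B^{(r)}}\to p^r\nu_1\to 0$. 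Second, even with the correct orientation the long exact sequence forces you to control $H^2(B^{(r)},K\otimes V_{\nu_0})$, where $K$ is filtered by one-dimensional $B^{(r)}$-modules $p^r\sigma$ with $\sigma$ ranging over the \emph{non-highest} weights of $\nabla(\nu_1)$. Most such $\sigma$ are not dominant, so Theorem~\ref{T:functorviainduction}(b) no longer applies and the acyclicity you need to pass back to $G^{(r)}$-cohomology is lost; the induction is neither well-founded nor degree-stable. Your part (a) inherits this gap and is otherwise only a sketch: the block-projection idea for blocks other than $(p^r-1)\rho$ is not made precise, and the hypothesis of (a) (which involves $L(\mu)$) does not obviously imply that of (b) (which involves $\nabla(\mu)$).

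The paper avoids the reduction from arbitrary $\la\in X_+$ to restricted weights altogether by invoking Theorem~\ref{T:cohom-criteria}: $\St_r\otimes M$ has a good filtration if and only if $\Hom_{G_r}(T(\hat{\mu}),M)^{(-r)}$ has a good filtration for every $\mu\in X_r$. For (a), the hypothesis gives $\Ext^1_G(\St_r\otimes\Delta(\sigma)^{(r)},L(\mu)\otimes M)=0$ for all $\sigma\in X_+$ (since $\St_r\otimes\Delta(\sigma)^{(r)}\cong\Delta((p^r-1)\rho+p^r\sigma)$ lies in the Steinberg block); the LHS spectral sequence then shows $\Hom_{G_r}(\St_r\otimes L(\mu^*),M)$ has a good $G/G_r$-filtration. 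For (b), $\Hom_{G_r}(\St_r\otimes\Delta(\mu^*),M)\cong\mathcal{F}_{\mu}(M)$ directly. In both cases the decisive step is Pillen's result that $T(\hat{\mu}_{(r)})$ is a $G$-summand of $\St_r\otimes L(\mu^*)$ (resp.\ $\St_r\otimes\Delta(\mu^*)$), so the good filtration passes to $\Hom_{G_r}(T(\hat{\mu}_{(r)}),M)$ and Theorem~\ref{T:cohom-criteria} finishes. This tilting-summand trick is precisely what replaces your missing inductive reduction.
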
 

\begin{proof} (a) Suppose that $\operatorname{pr}_{(p^{r}-1)\rho}(L(\mu)\otimes M)$ has a good filtration for all $\mu\in X_{r}$. 
Then, since $\St_r\otimes \Delta(\si)^{(r)} \cong \Delta((p^r-1)\rho + p^r\si)$ (dual to \cite[Proposition 3.19]{rags}),
$$\Ext_G^1(\St_r\otimes \Delta(\sigma)^{( r)}, L(\mu)\otimes M)=0$$ 
for all $\mu\in X_{r}$ and $\sigma\in X_{+}$. 
The Lyndon-Hochschild-Serre spectral sequence 
$$E^{i,j}_{2}=\Ext^{i}_{G/G_r}(\Delta(\sigma)^{( r)}, \Ext^{j}_{G_{r}}(\St_{r}\otimes L(\mu^{*}),M))\Rightarrow 
\Ext_G^{i+j}(\St_r\otimes \Delta(\sigma)^{( r)}, L(\mu)\otimes M)$$  
collapses because $\St_{r}$ is projective as a $G_{r}$-module and yields the isomorphism: 
$$\Ext_G^1(\St_r\otimes \Delta(\sigma)^{( r)}, L(\mu)\otimes M) \cong \Ext^{1}_{G/G_{r}}(\Delta(\sigma)^{(r )}, \Hom_{G_{r}}(\St_{r}\otimes L(\mu^{*}),M)).$$ 
Therefore, $\Hom_{G_{r}}(\St_{r}\otimes L(\mu^{*}),M))$
has a good filtration as a $G/G_{r}$-module. The $G/G_{r}$-module $\Hom_{G_{r}}(T(\hat{\mu}_{(r)}), M)$ is a direct summand of 
$\Hom_{G_{r}}(\St_{r}\otimes L(\mu^{*}), M)$, because $T(\hat{\mu}_{(r)})$ is a $G$-direct summand 
of $\St_{r}\otimes L(\mu^{*})$ by \cite[Section 2, Corollary A]{pillen93}. Therefore, $\Hom_{G_{r}}(T(\hat{\mu}_{(r)}), M)$ has a $G/G_{r}$-good filtration. It follows by 
Theorem~\ref{T:cohom-criteria} that $\St_{r}\otimes M$ has a good filtration. 

(b) By duality and Theorem \ref{T:functorviainduction}, we have
$$
\Hom_{G_{r}}(\St_{r}\otimes \Delta(\mu^{*}), M) \cong \Hom_{G_r}(k,\St_r\otimes\nabla(\mu)\otimes M) \cong  \text{ind}_{B/B_{r}}^{G/G_{r}}\left([\mu-(p^{r}-1)\rho \otimes M]^{T_{r}}\right).
$$
Following \cite[Section 2, Corollary A]{pillen93} (see \cite[Remark 4.1.4]{So}) , $T(\hat{\mu}_{(r)})$ is also a $G$-direct summand of $\St_{r}\otimes \Delta(\mu^{*})$ for $\mu \in X_r$. 
Therefore, if 
$\text{ind}_{B/B_{r}}^{G/G_{r}}\left([\mu-(p^{r}-1)\rho, \otimes M]^{T_{r}}\right)$ has a good filtration as a $G/G_{r}$-module for all $\mu\in X_{r}$, then, 
by Theorem~\ref{T:cohom-criteria}, $\St_{r}\otimes M$ has a good filtration. 
\end{proof} 

Note that  in part  (a) of the previous theorem the module  $L(\mu)$ could be replaced by any of the following: $\nabla(\mu)$, $\Delta(\mu)$ or $T(\mu)$.


\section{Applications: Tensoring with simple modules} 

\subsection{} In this section we will apply the results from the previous section to verify cases when 
$\St_{r}\otimes L(\la)$ has a good filtration. In order to do so, the following result of Kildetoft and Nakano shows that it suffices to focus on the case when $r=1$. 
 
\begin{theorem}\label{T:reduces to r=1} {\cite[Prop. 4.4.1]{KN}} If $\St_{1}\otimes L(\la)$ has a good filtration for all $\la\in X_{1}(T)$, then 
$\St_{r}\otimes L(\la)$ has a good filtration for all $\la\in X_{r}(T)$, $r\geq 1$. 
\end{theorem}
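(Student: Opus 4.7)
The plan is to induct on $r$, taking the base case $r = 1$ directly from the hypothesis. For the inductive step at level $r \geq 2$, I would write $\lambda = \lambda_0 + p\lambda_1$ uniquely with $\lambda_0 \in X_1$ and $\lambda_1 \in X_{r-1}$ and apply Steinberg's tensor product theorem to both $L(\lambda)$ and $\St_r$ (using $(p^r-1)\rho = (p-1)\rho + p(p^{r-1}-1)\rho$); after rearranging tensor factors, this yields
$$
\St_r \otimes L(\lambda) \;\cong\; \St_1 \otimes \Bigl(L(\lambda_0) \otimes \bigl(\St_{r-1} \otimes L(\lambda_1)\bigr)^{(1)}\Bigr).
$$

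By the inductive hypothesis, $B := \St_{r-1} \otimes L(\lambda_1)$ admits a good filtration with sections $\nabla(\mu_i)$ for some $\mu_i \in X_+$. Taking Frobenius twists and tensoring with $L(\lambda_0)$, the module $N := L(\lambda_0) \otimes B^{(1)}$ inherits a $G$-filtration whose sections are $L(\lambda_0) \otimes \nabla(\mu_i)^{(1)} = \nabla^{(p,1)}(\lambda_0 + p\mu_i)$; hence $N$ carries a good $p$-filtration. Note in particular that I cannot simply invoke Mathieu's theorem on the tensor product $(\St_1 \otimes L(\lambda_0)) \otimes B^{(1)}$, since the Frobenius twist $B^{(1)}$ need not possess a good filtration even though $B$ does.

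The induction therefore reduces to establishing the following extended form of the hypothesis at $r = 1$: whenever $N$ admits a good $p$-filtration, $\St_1 \otimes N$ admits a good filtration. To prove this, I would apply Theorem~\ref{T:cohom-criteria} at $r = 1$, which reduces the claim to the vanishing $\Ext^1_G(\Delta^{(p,1)}(\sigma), N) = 0$ for all $\sigma \in X_+$. The long exact sequence arising from the good $p$-filtration of $N$ further reduces this to the $(p,1)$-orthogonality $\Ext^1_G(\Delta^{(p,1)}(\sigma), \nabla^{(p,1)}(\tau)) = 0$ for all $\sigma, \tau \in X_+$. A Lyndon--Hochschild--Serre argument for $G_1 \lhd G$, combined with the $G_1$-injectivity of $T(\hat\sigma_0)$ (see \cite{pillen93}) and the classical Humphreys--Verma identification $T(\hat\sigma_0)|_{G_1} \cong Q_1(\sigma_0)$ at $r = 1$, would collapse the spectral sequence to $\Ext^1_G(\Delta(\sigma_1), \nabla(\tau_1)) = 0$, which is standard. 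The core technical input is thus the classical Humphreys--Verma identification at $r = 1$; the remainder is formal bookkeeping via Steinberg's tensor product theorem and the compatible splitting of $(p^r-1)\rho$.
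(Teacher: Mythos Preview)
The paper does not supply its own proof; the result is quoted from \cite{KN}. Your inductive strategy via the Steinberg tensor product decomposition is the natural one, but the final step contains a genuine gap.

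To establish the $(p,1)$-orthogonality $\Ext^1_G(\Delta^{(p,1)}(\sigma),\nabla^{(p,1)}(\tau))=0$ you invoke ``the classical Humphreys--Verma identification $T(\hat\sigma_0)|_{G_1}\cong Q_1(\sigma_0)$ at $r=1$.'' That identification is precisely Donkin's Tilting Module Conjecture (Conjecture~\ref{tilting}), which is \emph{open} in general and known only for $p\geq 2h-2$. Since Theorem~\ref{T:reduces to r=1} is meant to feed into results such as Theorem~\ref{T: boundonp} that go below that bound, assuming it here is circular. Note also that, as written, your orthogonality argument never actually uses the hypothesis of the theorem.

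The repair is to feed the hypothesis back in. After the LHS collapse you correctly arrive at
\[
\Ext^1_G\!\bigl(\Delta^{(p,1)}(\sigma),\nabla^{(p,1)}(\tau)\bigr)\;\cong\;\Ext^1_G\!\bigl(\Delta(\sigma_1),\,\Hom_{G_1}(T(\hat\sigma_0),L(\tau_0))^{(-1)}\otimes\nabla(\tau_1)\bigr).
\]
Rather than attempting to compute the $\Hom_{G_1}$ term via the conjecture, observe that the hypothesis ``$\St_1\otimes L(\tau_0)$ has a good filtration,'' read through criterion (b) of Theorem~\ref{T:cohom-criteria}, says exactly that $\Hom_{G_1}(T(\hat\sigma_0),L(\tau_0))^{(-1)}$ has a good filtration. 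Tensoring with $\nabla(\tau_1)$ preserves this (tensor product of modules with good filtrations), so the required $\Ext^1$ vanishes by Theorem~\ref{T:cohcrit}. With this correction your induction goes through.
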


\subsection{General bound on $p$} In \cite[Thm. 5.3.1, Thm. 9.4.1]{KN} \cite[Prop. 2.1]{andersen01}, it was shown that 
$\St_{r}\otimes L(\la)$ has a good filtration for $p\geq 2h-2$. This bound agrees 
with the present state of Donkin's Tilting Module Conjecture. The following result lowers 
the general bound for Conjecture~\ref{donkinconj}($\Rightarrow)$ to hold. This fact will be later used for 
our analysis of low rank groups. 

\begin{theorem} \label{T: boundonp} Let $\la \in X_{r}(T)$ and $p\geq 2h-4$. Then $\St_{r}\otimes L(\la)$ has a good filtration. 
\end{theorem}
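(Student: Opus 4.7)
The plan is to combine Theorem~\ref{T:reduces to r=1} with Theorem~\ref{T:Stblockcond}. First, Theorem~\ref{T:reduces to r=1} reduces the claim to the case $r=1$: it suffices to show that $\St_1\otimes L(\lambda)$ has a good filtration for every $\lambda\in X_1$. Fixing such a $\lambda$, I would apply Theorem~\ref{T:Stblockcond}(b) with $M=L(\lambda)$, so that the problem becomes verifying, for every $\mu\in X_1$, that the generalized Frobenius contraction
\[
\mathcal{F}_{\mu}(L(\lambda))\;\cong\;\operatorname{ind}_{B/B_1}^{G/G_1}\bigl([\mu-(p-1)\rho\otimes L(\lambda)]^{T_1}\bigr)
\]
has a good filtration as a $G/G_1$-module. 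Via the Frobenius identifications $G/G_1\cong G$ and $B/B_1\cong B$, this becomes a question about whether an ordinary induction from $B$ to $G$ carries a good filtration.

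The heart of the argument is a weight analysis of the $B$-module $N_{\mu,\lambda}:=[\mu-(p-1)\rho\otimes L(\lambda)]^{T_1}$ after untwisting by Frobenius. The $T$-weights appearing are of the form
\[
\sigma\;=\;\tfrac{1}{p}\bigl(\mu+\nu-(p-1)\rho\bigr),
\]
where $\nu$ ranges over weights of $L(\lambda)$ satisfying the congruence $\mu+\nu\equiv(p-1)\rho\pmod{pX}$. For $\mu,\lambda\in X_1$ and $p\geq 2h-4$, I would show that each such $\sigma$ lies in the dominant cone, and that the natural $T$-weight filtration of $N_{\mu,\lambda}$ gives an excellent filtration (in the sense of Polo and van der Kallen) by one-dimensional $B$-modules of dominant weight. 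Inducing such a filtration to $G$ yields a good filtration on $\operatorname{ind}_B^G N_{\mu,\lambda}$ by Kempf vanishing, and Theorem~\ref{T:Stblockcond}(b) then delivers the conclusion.

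The main obstacle is the sharpness of the weight estimate at the endpoint $p=2h-4$. The naive convex-hull bound on weights of $L(\lambda)$ (giving $\langle\sigma,\alpha_0^\vee\rangle\leq (p-1)(h-1)/p$) is not by itself strong enough to force dominance of every contributing $\sigma$; one must exploit the congruence condition $\mu+\nu\equiv(p-1)\rho\pmod{pX}$ together with $\mu,\lambda\in X_1$ to restrict the admissible $\nu$ to a sparser set, and then verify case-by-case that the remaining $\sigma$ are all dominant when $p\geq 2h-4$. This is precisely where the Frobenius contraction viewpoint developed in Section~3 produces the gain of two units over the previously known bound $p\geq 2h-2$, because working with $N_{\mu,\lambda}$ rather than directly with $\St_1\otimes L(\lambda)$ exposes the finer congruence structure on weights that makes the tighter estimate possible.
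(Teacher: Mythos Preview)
Your proposal has a genuine gap: the central claim that every weight $\sigma=\tfrac{1}{p}(\mu+\nu-(p-1)\rho)$ appearing in $N_{\mu,\lambda}$ is dominant is false. Already for $G=SL_2$, $p=3$, $\mu=\varpi_1$, $\lambda=2\varpi_1$, the unique weight surviving the $T_1$-invariants is $\sigma=-\varpi_1$, which is not dominant. So the asserted ``excellent filtration by one-dimensional $B$-modules of dominant weight'' does not exist in general, and the proposed mechanism for concluding that $\operatorname{ind}_B^G N_{\mu,\lambda}$ has a good filtration breaks down. The remark that the congruence condition forces the admissible $\nu$ into a sparser set does not rescue this; the counterexample above already uses a $\nu$ meeting the congruence.

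The paper instead applies part~(a) of Theorem~\ref{T:Stblockcond} rather than part~(b). One only has to check that every composition factor of $\operatorname{pr}_{(p-1)\rho}(L(\mu)\otimes L(\lambda))$ is of the form $\St_1\otimes L(\gamma)^{(1)}$ with $L(\gamma)\cong\nabla(\gamma)$; then the whole block component trivially has a good filtration. The inequality $(p-1)\rho+p\gamma\le\mu+\lambda$ combined with $\mu,\lambda\in X_1$ yields $\langle\gamma+\rho,\alpha_0^\vee\rangle\le 2h-3$, so for $p\ge 2h-3$ the weight $\gamma$ lies in the closure of the fundamental alcove and $L(\gamma)\cong\nabla(\gamma)$. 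The residual case $p=2h-4$ forces $p=2$, $h=3$, i.e.\ $SL_3$ in characteristic~$2$, where all restricted simples are tilting, so the result is already known. Thus the ``gain of two units'' comes not from finer congruence bookkeeping in the Frobenius contraction picture, but from the direct alcove estimate (giving $2h-3$) together with the single exceptional case of $SL_3$ at $p=2$.
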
 

\begin{proof}
By Theorem \ref{T:reduces to r=1}, it suffices to prove this for $r=1$, and we will do so by using the characterization given in Theorem \ref{T:Stblockcond}(a) applied to $M = L(\la)$.  Suppose 
that $\mu, \lambda \in X_1(T)$, and that $\text{St}_{1} \otimes L(\gamma)^{(1)}$ is a composition factor of $L(\mu) \otimes L(\la)$.  Since $(p-1)\rho + p\gamma \le \mu + \lambda$, 
\begin{eqnarray*}
\langle (p-1)\rho + p\gamma, \alpha_0^{\vee} \rangle & \le & \langle \mu + \lambda, \alpha_0^{\vee} \rangle\\
& \le & 2(p-1)(h-1).
\end{eqnarray*} 
Thus
$$\langle \gamma, \alpha_0^{\vee} \rangle \le \frac{(p-1)}{p}(h-1) < h-1.$$
From this, we have
$$\langle \gamma + \rho, \alpha_0^{\vee} \rangle < 2(h-1),$$
so that
$$\langle \gamma + \rho, \alpha_0^{\vee} \rangle \le 2h-3.$$
If $p \ge 2h-3$, then $\gamma$ is contained in the closure of the fundamental alcove, and $L(\gamma) \cong \nabla(\gamma)$.  This proves the result for $p \ge 2h-3$.  The case when $p=2h-4$ only occurs if $p=2$ and $h=3$.  But this result (indeed, the Tilting Module Conjecture as well) is known to hold for $SL_3$ in characteristic $2$, as the four restricted simple modules are all tilting in this case.  Therefore, the result holds when $p \ge 2h-4$.
\end{proof}

\subsection{General bound on $\la$} One can also give a general upper bound on $\lambda$ that will ensure that tensoring the 
$r$th-Steinberg with a simple $G_{r}$-module will have a good filtration. 

\begin{proposition}\label{P: boundonweight}
If $\la \in X(T)_+$ and $\langle \la, \alpha_0^{\vee} \rangle \le 2p^r-1$, then $\St_{r}\otimes L(\la)$ has a good filtration. 
\end{proposition}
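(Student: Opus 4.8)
The plan is to apply Theorem~\ref{T:Stblockcond}(b) to $M = L(\la)$, so the goal reduces to showing that for every $\mu \in X_r$ the $G/G_r$-module ${\mathcal F}_\mu(L(\la)) \cong \ind_{B/B_r}^{G/G_r}\left([\mu - (p^r-1)\rho \otimes L(\la)]^{T_r}\right)$ has a good filtration. The first step is to understand the weight $\mu - (p^r-1)\rho$, which for $\mu \in X_r$ is antidominant-ish and ranges over the box $-(p^r-1)\rho \le \mu - (p^r-1)\rho \le 0$; tensoring with $L(\la)$ and taking $T_r$-fixed points picks out the weights of $L(\la)$ that lie in $p^r X + ((p^r-1)\rho - \mu)$. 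Since $\langle \la, \alpha_0^\vee \rangle \le 2p^r - 1$, every weight $\nu$ of $L(\la)$ satisfies $-(2p^r-1) \le \langle \nu, \alpha^\vee \rangle \le 2p^r-1$ for all $\alpha$ (using $w_0 L(\la) = L(\la^*)$ and the fact that weights are bounded by the highest weight under the convex hull / $W$-action), so the $T_r$-fixed part is concentrated in a small set of $B/B_r \cong B^{(r)}$-weights — essentially lifts of weights in a neighborhood of $0$ of radius roughly $2$ in the $\alpha_0^\vee$-pairing.

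The key step is then to show that this small $B^{(r)}$-module, call it $N_\mu := [\mu - (p^r-1)\rho \otimes L(\la)]^{T_r}$ viewed as a $B^{(r)} \cong B$-module via the Frobenius identification, has a filtration by $B$-modules that induce (with no higher induction) to good filtration sections, i.e. $N_\mu$ should be shown to have an "excellent filtration" or at least that $\ind_{B^{(r)}}^{G^{(r)}} N_\mu$ has a good filtration. For this I would use that the relevant $B^{(r)}$-weights $\eta = (\mu - (p^r-1)\rho + \nu)/p^r$ satisfy $\langle \eta + \rho, \alpha_0^\vee \rangle$ small — in fact the bound $\langle \la, \alpha_0^\vee\rangle \le 2p^r-1$ is precisely engineered so that each such $\eta$ lies in the lowest alcove (or its closure) after the twist, hence $\ind_{B^{(r)}}^{G^{(r)}}$ of a one-dimensional $\eta$ is either $0$ or $\nabla(\eta)$. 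The remaining work is to organize the $B$-structure on $N_\mu$: filter $N_\mu$ by its $T_r$-weight spaces ordered compatibly with the $U$-action so that successive quotients are one-dimensional $B^{(r)}$-modules $\eta$, then use Theorem~\ref{T:functorviainduction}(b) (the acyclicity $R^{>0}\ind = 0$) to conclude $\ind_{B^{(r)}}^{G^{(r)}} N_\mu$ has a good filtration by splicing the short exact sequences.

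The main obstacle I anticipate is controlling the $B^{(r)}$-module structure of $N_\mu$ finely enough to guarantee a filtration whose sections are one-dimensional weight modules lying in the closure of the fundamental alcove after twisting — a priori $N_\mu$ is just some $B$-module, and a one-dimensional-quotient filtration exists for any finite-dimensional $B$-module, but one must verify that every weight appearing is genuinely in the lowest alcove's closure so that its induced module is $\nabla(\eta)$ rather than something with higher cohomology or a non-dominant vanishing. Concretely this amounts to the estimate: if $\nu$ is a weight of $L(\la)$ with $\nu \equiv (p^r-1)\rho - \mu \pmod{p^r X}$ and $\eta = (\nu - (p^r-1)\rho + \mu)/p^r \in X$, then $\langle \eta + \rho, \alpha^\vee\rangle \ge 0$ and $\langle \eta + \rho, \alpha_0^\vee \rangle \le p$; both should follow from $-(p^r-1) \le \langle \mu - (p^r-1)\rho, \alpha^\vee\rangle \le 0$ together with $|\langle \nu, \alpha^\vee\rangle| \le \langle \la, \alpha_0^\vee\rangle \le 2p^r-1$, and a short case check on $\alpha$ short versus long. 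Once that estimate is in hand the rest is formal, via Theorems~\ref{T:functorviainduction} and~\ref{T:Stblockcond}(b).
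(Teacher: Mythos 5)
You take route (b) of Theorem \ref{T:Stblockcond}, whereas the paper uses route (a); and as written your argument has a genuine gap at the key estimate. You need every weight $p^r\eta$ of $N_\mu=[\mu-(p^r-1)\rho\otimes L(\la)]^{T_r}$ to satisfy $\langle\eta+\rho,\alpha^\vee\rangle\geq 0$ for all simple $\alpha$, so that each one-dimensional section is acyclic for induction with $\ind_B^G\eta$ equal to $0$ or $\nabla(\eta)$. But the available inequalities only give $\langle\eta,\alpha^\vee\rangle\geq -2$: indeed $p^r\langle\eta,\alpha^\vee\rangle=\langle\nu,\alpha^\vee\rangle+\langle\mu,\alpha^\vee\rangle-(p^r-1)\geq -(2p^r-1)+0-(p^r-1)$, and the value $-2$ really occurs. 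Concretely, take $G=SL_2$, $p^r=2$, $\la=3\varpi_1$ (so $\langle\la,\alpha_0^\vee\rangle=3=2p-1$) and $\mu=0$: the weights of $-1\otimes L(3)$ are $2,0,-2,-4$, all $T_1$-fixed, so $\eta=-2$ appears and $R^1\ind_B^G(-2)\cong k\neq 0$. The splicing then breaks down: the section $\eta=0$ contributes $\ind_B^G(0)=k$, but this is killed by a connecting map into $R^1\ind_B^G(-2)\cong k$, and in fact ${\mathcal F}_0(L(3))^{(-1)}\cong\nabla(1)$ has no $\nabla(0)$ section at all. In other words, the acyclicity of the whole module $N_\mu$ guaranteed by Theorem \ref{T:functorviainduction}(b) does not descend to the one-dimensional sections of an arbitrary $B$-filtration, and the good filtration of $\ind_{B/B_r}^{G/G_r}N_\mu$ you propose does not exist; to repair this route one would need an excellent-filtration-type statement for $N_\mu$, which is far from automatic. (This is also why the paper's proof of the fundamental-weight theorem restricts attention to the \emph{dominant} weights $p^r\delta$ of $N_\mu$ rather than filtering by all weights.)

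The paper's proof is a three-line computation via Theorem \ref{T:Stblockcond}(a): if $\St_r\otimes L(\gamma)^{(r)}$ is a composition factor of $L(\mu)\otimes L(\la)$, then $(p^r-1)\rho+p^r\gamma\leq\mu+\la$, and pairing with $\alpha_0^\vee$ together with $\langle\mu,\alpha_0^\vee\rangle\leq\langle(p^r-1)\rho,\alpha_0^\vee\rangle$ yields $p^r\langle\gamma,\alpha_0^\vee\rangle\leq\langle\la,\alpha_0^\vee\rangle\leq 2p^r-1$, hence $\langle\gamma,\alpha_0^\vee\rangle<2$. This forces $\gamma$ to be zero or minuscule, so $L(\gamma)\cong\nabla(\gamma)$ and $\operatorname{pr}_{(p^r-1)\rho}(L(\mu)\otimes L(\la))$ is a direct sum of simple tilting modules $\nabla((p^r-1)\rho+p^r\gamma)$. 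Working on the $G$-side with composition factors, rather than on the $B$-side with weights, is what makes the hypothesis $\langle\la,\alpha_0^\vee\rangle\leq 2p^r-1$ exactly the right bound.
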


\begin{proof}
We work again with the characterization in Theorem \ref{T:Stblockcond}(a).  Suppose that $\St_r \otimes L(\gamma)^{(r)}$ is a composition factor of $L(\mu) \otimes L(\lambda)$ for some $\mu \in X_r(T)$.  
One has 
$$\langle \mu, \alpha_0^{\vee} \rangle \le \langle (p^r-1)\rho, \alpha_0^{\vee} \rangle,$$
so it follows that
$$p^r\langle \gamma, \alpha_0^{\vee} \rangle  \le \langle \la, \alpha_0^{\vee} \rangle \le 2p^r-1.$$
Thus
$$\langle \gamma, \alpha_0^{\vee} \rangle \le 2 - 1/p^r<2,$$
forcing $\gamma$ to be minuscule and therefore $L(\gamma) \cong \nabla(\gamma)$.
\end{proof}

\subsection{Rank 2 groups} The following theorem completes work on rank two groups initiated in 
\cite[Section 8]{KN}.

\begin{theorem} Assume the Lie rank of $G$ is less than or equal to two. Then 
$\St_{r}\otimes L(\la)$ has a good filtration for all $\la \in X_{r}(T)$. 
\end{theorem}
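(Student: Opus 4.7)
The plan is to reduce to the case $r=1$ using Theorem \ref{T:reduces to r=1}, so that it suffices to verify that $\St_1\otimes L(\lambda)$ has a good filtration for every $\lambda\in X_1(T)$. The rank one case is $G=SL_2$, where $h=2$, so the hypothesis $p\geq 2h-4$ of Theorem \ref{T: boundonp} holds for every prime and the claim is immediate. For rank two I would split on type: for $A_2$ we have $h=3$ so Theorem \ref{T: boundonp} again applies to every $p$; for $B_2$ ($h=4$) it handles $p\geq 5$; and for $G_2$ ($h=6$) it handles $p\geq 11$. The actual work lies in the finitely many residual pairs $(G,p)\in\{(B_2,2),(B_2,3),(G_2,2),(G_2,3),(G_2,5),(G_2,7)\}$.

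For these residual pairs I would cut the list of weights to be examined down further using Proposition \ref{P: boundonweight}: any restricted $\lambda$ with $\langle\lambda,\alpha_0^\vee\rangle\leq 2p-1$ requires no further argument. A brief enumeration shows that this disposes of every $\lambda\in X_1(T)$ for $(B_2,2)$, and leaves only $\lambda=2\omega_1+2\omega_2$ in the case $(B_2,3)$. For $G_2$ with $p=5$ and $p=7$ the Proposition already handles most of $X_1(T)$, leaving only weights close to $(p-1)\rho$; for $G_2$ with $p=2,3$ the restricted region is small enough to list all surviving weights outright. Many entries of the resulting residual list were already dealt with in \cite[Section 8]{KN}, so those can simply be invoked.

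For the remaining cases I would use Theorem \ref{T:Stblockcond}(a): it is enough to show that $\operatorname{pr}_{(p-1)\rho}(L(\mu)\otimes L(\lambda))$ has a good filtration for each $\mu\in X_1(T)$. Because the rank is at most two, the composition factors of $L(\mu)\otimes L(\lambda)$ can be computed explicitly from known character data (Jantzen sum formulas, Lübeck's tables, or direct calculation with the Weyl module structure), and the composition factors lying in the Steinberg block $(p-1)\rho+pX$ are of the form $\St_1\otimes L(\gamma)^{(1)}$. Since the rank is small one can then verify the good filtration condition either by inspecting each $L(\gamma)$ directly (many $\gamma$ will be in the closure of the fundamental alcove, so $L(\gamma)=\nabla(\gamma)$) or by applying the cohomological criterion of Theorem \ref{T:cohcrit}. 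Where a direct check on the projection side seems awkward, Theorem \ref{T:Stblockcond}(b) and the explicit $B/B_1$-description of the contraction functor in Theorem \ref{T:functorviainduction} provide an alternative computation through $B$-cohomology.

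The main obstacle is the $G_2$, $p=2$ case. Here the character theory of the restricted simples is the most delicate, the residual list after Proposition \ref{P: boundonweight} is the longest, and the blocks under $\operatorname{pr}_{(p-1)\rho}$ contain many composition factors that are not in the closure of the fundamental alcove, so one cannot simply conclude $L(\gamma)\cong\nabla(\gamma)$. I expect the final argument there will require a careful bookkeeping of composition factors and $\Ext^1$-groups analogous to the $A_5$, $p=2$ treatment carried out in Section 6; in particular some weights may force one to track summands of $\St_1\otimes L(\lambda)$ beyond the single $T(\hat\lambda)$-summand used in the $p\geq 2h-2$ range, as anticipated in the introduction.
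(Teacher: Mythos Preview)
Your overall framework is right --- reduce to $r=1$, use Theorem~\ref{T: boundonp} and Proposition~\ref{P: boundonweight} to cut down to finitely many residual $(G,p)$, then invoke \cite[Section~8]{KN} and clean up what remains via Theorem~\ref{T:Stblockcond}(a). This is exactly the paper's strategy.

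The genuine gap is that you have misidentified which case is left over after invoking \cite{KN}. In \cite[\S 8.5]{KN} every rank~$\leq 2$ case is already settled \emph{except} $G_2$ with $p=7$; in particular $G_2$, $p=2$ is already done there and requires nothing resembling the Section~\ref{S:char2} analysis. Your proposal therefore stops short of a proof precisely where the actual work lies, and spends its worry on a case that is already handled.

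For the one genuinely open case $G_2$, $p=7$, the argument is short and uses only the weight bounds you already set up. If $\St_1\otimes L(\gamma)^{(1)}$ is a composition factor of $L(\mu)\otimes L(\lambda)$ with $\mu,\lambda\in X_1$ and $\lambda\neq 6\rho$, then $7\langle\gamma,\alpha_0^\vee\rangle\leq \langle\lambda,\alpha_0^\vee\rangle\leq 28$, so $\langle\gamma,\alpha_0^\vee\rangle\leq 4$, forcing $\gamma\in\{0,\varpi_1,\varpi_2,2\varpi_1\}$. The first three satisfy $L(\gamma)=\nabla(\gamma)$ (they lie in the bottom alcove closure or have nothing lower linked). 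The only problematic value is $\gamma=2\varpi_1$, which can occur only when $\mu=6\rho$ and $\lambda=5\varpi_1+6\varpi_2$; but then one checks $\Hom_{G_1}(T(12\rho),L(\lambda))=0$ since $12\rho$ is not $G_1$-linked to $\lambda$, so this term does not obstruct the good filtration. No explicit character tables, L\"ubeck data, or delicate $\Ext^1$ bookkeeping are needed.
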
 

\begin{proof}
Again, we use Theorem \ref{T:reduces to r=1} to reduce to the case that $r=1$.  In \cite[8.5]{KN}, the claim was shown in all cases except for when $\Phi$ is of type $G_2$ and $p = 7$, so we consider this case.  
Here $h = 6$ and $\alpha_0^{\vee} = 2\alpha_1^{\vee} + 3\alpha_2^{\vee}$.   Set $\lambda = a\varpi_1 + b\varpi_2$ with $0 \leq a, b \leq 6$.  We may assume that $\lambda \neq 6\rho$, so at least one of $a$ or $b$ is strictly less than 6.

Suppose now that $\mu \in X_1(T)$, and that $\St \otimes L(\gamma)^{(1)}$ is a composition factor of $L(\mu) \otimes L(\lambda)$.  Then
$$7\langle \gamma, \alpha_0^{\vee} \rangle \le 2a+3b \le 10+18=28,$$
with equality occurring only if $\lambda=5\varpi_1+6\varpi_2$.  
So $\langle \gamma,\al_0^{\vee}\rangle \leq 4$.  
Let $\gamma=c\varpi_1 + d\varpi_2$ with $0\le c,d \le 6$.  Then $2c+3d = \langle\ga,\al_0^{\vee}\rangle\le 4$. So $c \leq 2$, $d \leq 1$, and at most one of $c$ or $d$ may be non-zero.  
\vskip .25cm 

\noindent {\it Case I.} $\gamma = 0$: Then $L(\gamma) = L(0) = k = \nabla(0)$.

\bigskip
\noindent{\it Case II.} $\gamma = \varpi_1$: This lies in closure of the bottom alcove, and so $L(\varpi_1) = \nabla(\varpi_1)$.

\bigskip
\noindent{\it Case III.} $\gamma = \varpi_2$: Similarly, this does not lie in the bottom alcove, however, there is nothing lower linked to it, and so $L(\varpi_2) = \nabla(\varpi_2)$.

\bigskip
\noindent{\it Case IV.} $\gamma = 2\varpi_1 = 2\alpha_0$: Note that this is the same weight observed in \cite[8.5.4]{KN} to be problematic.  Here $L(2\varpi_1) \neq \nabla(2\varpi_1)$.  This situation 
occurs only if $\mu = (p-1)\rho$ and $\lambda = 5\varpi_1 + 6\varpi_2$.   But
$$\Hom_{G_1}(\St_{1}, \St_{1} \otimes L(\lambda))^{(-1)} \cong \Hom_{G_1}(\St_{1} \otimes \St_{1}, L(\lambda))^{(-1)},$$
and it suffices to only check this for the tilting summand of highest weight in $\text{St}_{1}\otimes \text{St}_{1}$.  But $12\rho$ is not $G_1$-linked to $\lambda$, so $\Hom_{G_1}(T(12\rho)),L(\lambda))^{(-1)}=0$.
\end{proof}

Although we rely on \cite[Section 8.2]{KN} to remove most of the cases, the results in this paper could have been used in other type $G_2$ cases and lead to very short proofs for the other rank $\le 2$ groups.  For example, Theorem \ref{T: boundonp} (and its proof) establish the result for $SL_2$ and $SL_3$ in all characteristics.  For type $B_2$, we have $h=4$, so that the result holds for all $p \ge 4$ by Theorem \ref{T: boundonp}, leaving only $p=2,3$ to check.  If $p=2$ and $\la \in X_1(T)$, then $\langle \la, \alpha_0^{\vee} \rangle \le 3 = 2p-1$.  If $p=3$ and $\la \in X_1(T)$ is not the Steinberg weight (for which the result is clear), then $\langle \la, \alpha_0^{\vee} \rangle \le 5 = 2p-1$.  Thus, in both of these cases, the result holds by Proposition \ref{P: boundonweight}.

\subsection{Fundamental Weights} We now consider the case of a restricted irreducible $G$-module 
where the highest weight is a fundamental weight.  To do this, we need to extend the usual partial order on weights to a parital ordering $\leq_{\mathbb{Q}}$ relative to the rational numbers.  For $\mu, \la \in X$, we say that $\mu \leq_{\mathbb{Q}} \la$ if 
$$
\la - \mu = \sum_{\al \in S}q_{\al}\al
$$
for $q_{\al} \in \mathbb{Q}$ with $q_{\al} \geq 0$. Note that $\mu \leq \la$ implies $\mu \leq_{\mathbb{Q}} \la$.

\begin{theorem} Let the Lie rank of $G$ be $n$. Then 
\begin{itemize} 
\item[(a)] $\St_{r}\otimes L(\varpi_{j})$ has a good filtration for $j=1,2,\dots,n$ and $r\geq 2$. 
\item[(b)] $\St_{1}\otimes L(\varpi_{j})$ has a good filtration for $j=1,2,\dots,n$, except possibly when $p = 2$ and 
$\Phi=E_{7},\ E_{8}$. 
\item[(c)] In the case when $p=2$ and $\Phi=E_{7}$ or $E_{8}$, 
\begin{itemize} 
\item[(i)] $\St_{1}\otimes L(\varpi_{j})$ has a good filtration for $j\neq 4$ when $\Phi=E_{7}$
\item[(ii)] $\St_{1}\otimes L(\varpi_{j})$ has a good filtration for $j\neq  4,5$ when $\Phi=E_{8}$.  
\end{itemize}  
\end{itemize} 
\end{theorem}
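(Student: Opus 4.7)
The strategy is to apply Proposition~\ref{P: boundonweight} uniformly and fall back on a finer composition-factor analysis via Theorem~\ref{T:Stblockcond}(a) only in the handful of cases not covered by that bound. Set $c_j := \langle \varpi_j, \alpha_0^\vee \rangle$ for the $j$th comark (dual Kac label) of $\Phi$; inspecting each simple type yields $c_j \leq 6$ for every simple $G$, with the bound attained only in $E_8$ (whose comarks are $(2,3,4,6,5,4,3,2)$ in Bourbaki numbering). Part (a) is then immediate: for $r \geq 2$ and $p \geq 2$ we have $2p^r - 1 \geq 7 > 6 \geq c_j$, so Proposition~\ref{P: boundonweight} applies to every fundamental weight.

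For parts (b) and (c), take $r = 1$, so that Proposition~\ref{P: boundonweight} applies precisely when $c_j \leq 2p - 1$. A case-by-case check leaves uncovered only the triples $(\Phi, p, j) = (E_7, 2, 4)$, $(E_8, 2, j)$ for $j \in \{3,4,5,6\}$, and $(E_8, 3, 4)$. The triples $(E_7, 2, 4)$, $(E_8, 2, 4)$, and $(E_8, 2, 5)$ are exactly the exceptions permitted in (c)(i) and (c)(ii); the triples still requiring work are $(E_8, 2, 3)$, $(E_8, 2, 6)$, and $(E_8, 3, 4)$. For each of these, invoke Theorem~\ref{T:Stblockcond}(a): it suffices to show that $\operatorname{pr}_{(p-1)\rho}(L(\mu) \otimes L(\varpi_j))$ has a good filtration for every $\mu \in X_1(T)$. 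Any composition factor $\St_1 \otimes L(\gamma)^{(1)}$ of this projection satisfies $(p-1)\rho + p\gamma \leq \mu + \varpi_j$, and the same comparison used in the proof of Proposition~\ref{P: boundonweight} yields $p \langle \gamma, \alpha_0^\vee \rangle \leq c_j$, forcing $\langle \gamma, \alpha_0^\vee \rangle \leq 2$ in each of the three triples. Since $E_8$ has trivial center and thus admits no minuscule representation (every $c_i \geq 2$), the only admissible $\gamma$ are $\gamma = 0$ and the two comark-$2$ fundamentals $\varpi_1, \varpi_8$. One then verifies (from published tables of irreducible $E_8$-characters at $p = 2, 3$) the identifications $L(\varpi_1) = \nabla(\varpi_1)$ and $L(\varpi_8) = \nabla(\varpi_8)$; combined with $\St_1 \otimes \nabla(\gamma)^{(1)} \cong \nabla((p-1)\rho + p\gamma)$ and Theorem~\ref{T:cohcrit}, this produces the required good filtration on the projection.

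The main obstacle is the characteristic-specific verification $L(\varpi_i) = \nabla(\varpi_i)$ for $i \in \{1, 8\}$ in $E_8$ at the very small primes $p = 2, 3$: this is a structural input beyond the reach of the coarse $\alpha_0^\vee$-estimate used elsewhere in the argument. A secondary subtlety is that, once the candidate composition factors are pinned down, they still must be assembled into a genuine $\nabla$-filtration of the projection, for which one uses the cohomological criterion Theorem~\ref{T:cohcrit} in the Steinberg block. The residual triples $(E_8, 2, j)$ for $j = 4, 5$ appear to require either additional information about characteristic-$2$ composition series for fundamental $E_8$-modules (in particular for comark-$3$ fundamentals $\varpi_2, \varpi_7$, which enter the analysis when $c_j = 6$) or a genuinely different approach, which is why they remain unresolved and are recorded as exceptions in (c)(ii).
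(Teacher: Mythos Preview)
Your case-by-case check in the second paragraph is incomplete: you have overlooked type $F_4$. For $F_4$ one has $\alpha_0^\vee = 2\alpha_1^\vee + 4\alpha_2^\vee + 3\alpha_3^\vee + 2\alpha_4^\vee$, so the comarks are $(c_1,c_2,c_3,c_4) = (2,4,3,2)$; at $p = 2$ the bound $c_j \leq 2p - 1 = 3$ fails for $j = 2$, and Proposition~\ref{P: boundonweight} does not cover the triple $(F_4, 2, 2)$. The paper handles this case by pairing instead with the coroot $\alpha_h^\vee = 2\alpha_1^\vee + 3\alpha_2^\vee + 2\alpha_3^\vee + \alpha_4^\vee$ of the highest \emph{long} root, for which $\langle\varpi_j,\alpha_h^\vee\rangle = (2,3,2,1)$; this restricts the relevant $\gamma$ to $0$ or $\varpi_4$, and one then checks from \cite{Jan91} that $\nabla(\varpi_4) = L(\varpi_4)$ when $p = 2$.

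There is a second problem in your treatment of the residual $E_8$ triples. Your inner-product bound gives $\gamma\in\{0,\varpi_1,\varpi_8\}$, but the assertion that $L(\varpi_1)=\nabla(\varpi_1)$ for $E_8$ at $p=2$ is false: from L\"ubeck's tables $\dim L(\varpi_1) = 3626 < 3875 = \dim\nabla(\varpi_1)$ in characteristic $2$. The repair is to use the full partial order rather than only the pairing with $\alpha_0^\vee$. Since in $E_8$ the weight and root lattices coincide, the rational inequality $p\gamma \leq_{\mathbb{Q}} \varpi_j$ upgrades to $p\gamma \leq \varpi_j$; writing $\varpi_1$ and $\varpi_j$ in the root basis one checks directly that $2\varpi_1 \not\leq \varpi_3$, $2\varpi_1 \not\leq \varpi_6$, and $3\varpi_1 \not\leq \varpi_4$ (the paper cites \cite[Figure~3]{UGAVIGRE6} for this). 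This eliminates $\gamma=\varpi_1$ in all three triples, leaving only $\gamma\in\{0,\varpi_8\}$, where $\nabla(\varpi_8)=L(\varpi_8)$ holds by \cite{GGN}. Aside from these two points your strategy is essentially that of the paper, though the paper works through Theorem~\ref{T:Stblockcond}(b) (the Frobenius contraction functor) rather than part~(a); the resulting weight inequalities are the same.
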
 

\begin{proof} We want to consider $\operatorname{ind}_{B/B_{r}}^{G/G_{r}}\left([\mu-(p^{r}-1)\rho \otimes L(\varpi_{j})]^{T_{r}}\right)$ 
for $\mu\in X_{r}(T)$ and show it has a good filtration. This will occur if all the dominant weights in 
$[\mu-(p^{r}-1)\rho \otimes L(\varpi_{j})]^{T_{r}}$ are of the form $p^{r}\delta$ with the property that  $H^{0}(\delta)=L(\delta)$. 

Let $p^{r}\delta$ be a dominant weight of $[\mu-(p^{r}-1)\rho \otimes L(\varpi_{j})]^{T_{r}}$. Then 
$p^{r}\delta=\mu-(p^{r}-1)\rho+\gamma$ where $\gamma$ is a weight of $L(\varpi_{j})$. Since 
$\mu\leq_{\mathbb Q} (p^{r}-1)\rho$, it follows that 
\begin{equation}
p^{r}\delta\leq_{\mathbb Q} \gamma.
\end{equation} 
By taking the inner product with $\alpha_{0}^{\vee}$, one obtains 
\begin{equation} 
0\leq \langle \delta,\alpha_{0}^{\vee} \rangle \leq \frac{1}{p^{r}}\langle \varpi_{j},\alpha_{0}^{\vee} \rangle. 
\end{equation} 
Set $h(j,r,p)=\frac{1}{p^{r}}\langle \varpi_{j},\alpha_{0}^{\vee}\rangle $. 

For types $A_{n}$, $B_{n}$, $C_{n}$, $D_{n}$, $E_{6}$ and $G_{2}$, one has 
$h(j,r,p)<2$ for all $j,r$. In these cases this implies that $\delta$ is either zero or minuscule 
and $H^{0}(\delta)=L(\delta)$. 

For type $F_{4}$, one can repeat this argument, but replacing $\alpha^{\vee}_0$ with the coroot of the highest root $\alpha_{h}^{\vee}=
2\alpha_{1}^{\vee}+3\alpha_{2}^{\vee}+2\alpha_{3}^{\vee}+\alpha_{4}^{\vee}$. In this case, one obtains 
$h(j,r,p)=\frac{1}{p^{r}}\langle \varpi_{j},\alpha_{h}^{\vee}\rangle<2$. This implies that $\delta=0$ or $\omega_{4}$ (with $p=2,3$). 
The only solution to $p^{r}\omega_{4}\leq_{\mathbb Q} \omega_{j}$ occurs when $p=2$, $r=1$ and $j=2$. However, 
$H^{0}(\omega_{4})=L(\omega_{4})$ when $p=2$ (cf. \cite[p. 299]{Jan91}). 

In the case when $\Phi=E_{7}$, one has $h(j,r,p)< 2$ unless $j=4$, $r=1$ and $p=2$. For 
$\Phi=E_{8}$, one has $h(j,r,p)< 2$ unless 
\begin{itemize} 
\item[(i)] $j=\varpi_{3}$, $r=1$, $p=2$;
\item[(ii)] $j=\varpi_{6}$, $r=1$, $p=2$;
\item[(iii)] $j=\varpi_{5}$, $r=1$, $p=2$;
\item[(iv)] $j=\varpi_{4}$, $r=1$, $p=2$;
\item[(v)] $j=\varpi_{4}$, $r=1$, $p=3$.
\end{itemize}
For type $E_{8}$, the root lattice and the weight lattice coincide, so in this case 
$$p^{r}\delta\leq \gamma \leq \varpi_{j}.$$
Suppose $\delta\neq 0$. Using \cite[Figure 3]{UGAVIGRE6}, in cases (i), (ii), (v) it follows that $\delta=\varpi_{8}$, and 
one has $H^{0}(\delta)=L(\delta)$ by \cite[Theorem 1.1]{GGN}. One is left with 
cases (iii), (iv) for type $E_{8}$. 
\end{proof}


\section{Higher Rank Cases} 

\subsection{} In this section, we consider the question of whether $\St_r\otimes L(\la)$ has a good filtration for some higher rank groups over small primes.  Here the results are less complete, and we will focus on the $r = 1$ situation.  For those cases where we can show that
\begin{equation}\label{claim} 
\St_1\otimes L(\la)\ \text{has a good filtration for all $\la \in X_1$},
\end{equation} 
it will then follow from Theorem \ref{T:reduces to r=1} that $\St_r\otimes L(\la)$ has a good filtration for all $\la \in X_r$ for $r > 1$.

Note that $L((p-1)\rho) \simeq \St_1$, and the claim (\ref{claim}) always holds for this particular weight. For a given $\la \in X_1$, we may therefore assume throughout that $\la \neq (p-1)\rho$.  We again make use of 
Theorem \ref{T:Stblockcond}(a).   Suppose that $\St_1\otimes L(\ga)^{(1)}$ is a composition factor of $L(\mu)\otimes L(\la)$ for $\ga \in X_+$ and $\mu \in X_1$.  Our goal is to show that either no such $\ga$ and $\mu$ exist or that $L(\ga) = \nabla(\ga)$.   Recall the notation $\mu_{(1)} := (p-1)\rho - \mu$.  As in the proof of Theorem \ref{T: boundonp}, we must have
\begin{equation}\label{E:weightineq}
p\ga \leq \la - \mu_{(1)},
\end{equation}
from which we may conclude that
\begin{equation}\label{E:innerineq}
p\langle\ga,\al_0^{\vee}\rangle \leq \langle \la,\al_0^{\vee}\rangle \hskip0.5in \text{and} \hskip0.5in
p\langle\ga,\ta^{\vee}\rangle \leq \langle \la,\ta^{\vee}\rangle,
\end{equation}
where $\ta$ denotes the highest root.  These inequalities are often sufficient to eliminate options, but further reductions can also be made by noting that $\la$ must be $G_1$-linked to $\mu_{(1)}$.  

\subsection{Rank 3 groups.} For rank three groups, the claim (\ref{claim}) holds in almost all cases (cf. also \cite[\S 8.3]{KN}).

\begin{theorem} Let $G$ be of type $A_3$, $B_3$, or $C_3$ and $\la \in X_1$.  Then $\St_1\otimes L(\la)$ has a good filtration except possibly for the following cases:
\begin{itemize}
\item Type $B_3$ with $p = 7$: $\la =(6,5,5), (6,4,5), (6,5,4), (5,5,5), (5,5,4), (5,5,4)$, $(5,4,5)$, $(4,5,5)$, $(4,5,4)$, or $(3,5,5)$,
\item Type $C_3$ with $p = 3$: $\la = (2,1,2) \text{ or } (2,2,1)$,
\item Type $C_3$ with $p = 7$: $\la = (6,5,5), (6,4,5), (6,5,4), (5,5,5), \text{ or } (4,5,5)$.
\end{itemize}
\end{theorem}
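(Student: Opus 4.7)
The strategy is to combine the machinery of §§3--5.1 with a finite case analysis organized by pairs $(\Phi, p)$. I would first invoke Theorem \ref{T:reduces to r=1} to reduce the claim to $r = 1$, and then apply Theorem \ref{T: boundonp} to settle all primes $p \geq 2h - 4$. For $A_3$ this leaves only $p = 2, 3$, while for $B_3$ and $C_3$ (where $h = 6$) it leaves $p \in \{2, 3, 5, 7\}$. For each such prime, Proposition \ref{P: boundonweight} immediately dispatches every restricted weight $\la$ satisfying $\langle \la, \al_0^{\vee} \rangle \leq 2p - 1$. For $A_3$ with $p = 2$ this bound covers every weight, and for $A_3$ with $p = 3$ it leaves only $\la = (p-1)\rho = \St_1$, whose good filtration is trivial. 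Thus the $A_3$ case is complete after this step, and a significant fraction of the remaining $B_3$ and $C_3$ weights at each surviving prime are also handled immediately.

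For the residual $(\Phi, p, \la)$ triples in types $B_3$ and $C_3$, I would follow the method outlined in §5.1. Suppose $\St_1 \otimes L(\ga)^{(1)}$ is a composition factor of $L(\mu) \otimes L(\la)$ for some $\mu \in X_1$ and $\ga \in X_+$. The inequalities (\ref{E:weightineq})--(\ref{E:innerineq}) yield $p\ga \leq \la - \mu_{(1)}$, and pairing with both the highest short coroot $\al_0^{\vee}$ and the highest long coroot $\ta^{\vee}$ restricts $\ga$ to a very short list (typically a handful of weights near the origin). Further pruning comes from the requirement that $\la$ be $G_1$-linked to $\mu_{(1)}$, which for each fixed $\ga$ determines a finite set of admissible $\mu$. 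For every surviving $\ga$ one consults the known descriptions of restricted simple modules for $B_3$ and $C_3$ at the relevant prime to check whether $L(\ga) = \nabla(\ga)$. Those $\la$ for which every surviving $\ga$ satisfies $L(\ga) = \nabla(\ga)$ then satisfy the claim via Theorem \ref{T:Stblockcond}(a); the remaining $\la$ comprise the exceptional list in the statement.

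The main obstacle will be the combinatorial bookkeeping in the $p = 7$ case for $B_3$ and $C_3$, where $\langle (p-1)\rho, \al_0^{\vee} \rangle$ is large enough to permit several candidate $\ga$ for each $\la$, and in the $p = 3$ case for $C_3$, where the linkage classes force a careful check of multiple admissible $\mu$. A subtler difficulty is that, even once the candidate set of $\ga$ has been narrowed, the analysis in §5.1 only shows that $\St_1 \otimes L(\ga)^{(1)}$ \emph{could} appear in $L(\mu) \otimes L(\la)$, not that it actually does. A direct attack on some of the residual cases by computing $\Hom_{G_1}(T(\hat\mu), L(\la))^{(-1)}$, in the spirit of the $G_2$, $p = 7$ argument of §4.4, may eliminate further candidates, but does not appear to remove the entire residual set; this is precisely why the theorem must include the "except possibly" clause for the listed weights.
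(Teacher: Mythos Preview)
Your strategy matches the paper's almost exactly, and the reductions you describe for $A_3$ are precisely what the paper does. However, your procedure as stated would not land on the exact exceptional list in the theorem; two ad hoc refinements are needed that your sketch does not anticipate.

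First, for $B_3$ with $p=2$, the weight $\la = \varpi_1 + \varpi_2$ survives Proposition~\ref{P: boundonweight} (its pairing with $\al_0^\vee$ is $4 > 2p-1$), and the only candidate is $\ga = \varpi_1$, $\mu = \rho$. Since $L(\varpi_1) \neq \nabla(\varpi_1)$ when $p=2$, your criterion would place this $\la$ on the exceptional list. The paper instead rules it out by a character-level argument: using LiE one checks that $\St_1 \otimes L(\varpi_1)^{(1)}$ is not a composition factor of $\St_1 \otimes \nabla(\varpi_1+\varpi_2)$, hence not of $\St_1 \otimes L(\la)$.

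Second, for $C_3$ with $p=3$, after the linkage pruning you describe one is left with four pairs $(\la,\mu_{(1)})$ involving three distinct $\la$, namely $(2,1,2)$, $(2,2,1)$, and $(2,0,2)$. The paper eliminates $(2,0,2)$ and one of the two $(2,1,2)$ pairs by a genuinely module-theoretic argument: one computes the multiplicity of $L(\varpi_2)$ in $\Hom_{G_1}(\St_1,\St_1\otimes L(\la))^{(-1)}$ via $G_1T$-baby Verma sections $Z_1(\nu)$ of $\St_1\otimes\St_1$, and then shows that this single copy of $L(\varpi_2)$ lies inside a tilting summand $T(\varpi_2)$, forcing the whole Hom-space to be tilting. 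This is more delicate than the $G_2$, $p=7$ Hom-vanishing you invoke, and without it your list would read $(2,1,2)$, $(2,2,1)$, $(2,0,2)$ rather than the two weights in the statement.
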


\begin{proof} The case of type $A_3$ was shown by Kildetoft and Nakano \cite[\S 8.3]{KN}, but also follows from our previous results. By Theorem \ref{T: boundonp}, we are reduced to $p = 2$ or 3.  But those cases follow from Proposition \ref{P: boundonweight}.

For type $B_3$, $h = 6$.  By Theorem \ref{T: boundonp}, we are done if $p > 7$.  Note that $\al_0^{\vee} = 2\al_1^{\vee} + 2\al_2^{\vee} + \al_3^{\vee}$ and $\ta^{\vee} = \al_1^{\vee} + 2\al_2^{\vee} + \al_3^{\vee}$, from which we see that $\langle\rho,\al_0^{\vee}\rangle = 5$ and $\langle\rho,\ta^{\vee}\rangle = 4$.

For $p = 2$, Proposition \ref{P: boundonweight} reduces us to $\la = \varpi_1 + \varpi_2$.  The only $\ga$ and $\mu$ satisfying \eqref{E:weightineq} are $\ga = \varpi_1$ and $\mu = \rho$ (so $L(\mu) = \St_1$).  
Since $\St_1\otimes L(\varpi_1+\varpi_2) \subset \St_1\otimes\nabla(\varpi_1 + \varpi_2)$, 
if $\St_1\otimes L(\varpi_1)^{(1)}$ is a composition factor of $\St_1\otimes L(\varpi_1+\varpi_2)$, then it is also a composition factor of $\St_1\otimes\nabla(\varpi_1 + \varpi_2)$.   But, that is not the case based on explicit weight computations using the software LiE \cite{LiE}. 

For $p = 3$, applying both inequalities in \eqref{E:innerineq}, we are reduced to the following options for $\ga$: $\varpi_1$, $\varpi_2$, $\varpi_3$, $2\varpi_3$, or $\varpi_1 + \varpi_3$. Applying \eqref{E:weightineq} to $\ga = 2\varpi_3$, we would need $6\varpi_3 \leq \lambda - \mu_{(1)} <_{\mathbb{Q}} 2\rho$, which fails to hold.  That leaves us with $\ga = \varpi_i$ or $\varpi_1 + \varpi_3$.  For $p \neq 2$, each $\nabla(\varpi_i)$ is simple (cf. \cite[II.8.21]{rags}).  Further, $\nabla(\varpi_1 + \varpi_3)$ is also known to be simple, except when $p = 7$ (cf. \cite{GGN}). 

For $p = 5$, \eqref{E:innerineq} reduces us to $\ga = \varpi_1$, $\varpi_2$, $\varpi_3$, $2\varpi_3$, $3\varpi_3$, $\varpi_1 + \varpi_3$, or $\varpi_2 + \varpi_3$.   From \eqref{E:weightineq}, one has $5\ga \leq \la - \delta <_{\mathbb{Q}} 4\rho$.  This fails to hold for $\ga = 3\varpi_3$.   While it is true that $5(\varpi_2 + \varpi_3) <_{\mathbb{Q}} 4\rho$, there is no $\la \neq 4\rho$ with $5(\varpi_2 + \varpi_3) \leq \la - \mu_{(1)}$.   So this reduces us to $\ga = \varpi_i$, $\varpi_1 + \varpi_3$, or $2\varpi_3$.  As noted above, each $\nabla(\varpi_i)$ and $\nabla(\varpi_1 + \varpi_3)$ is simple.  Lastly, by explicit dimension computations of L{\"u}beck \cite{L}, $\nabla(2\varpi_3)$ is simple for all odd primes. 

For $p = 7$, from \eqref{E:innerineq}, we are reduced to the following options for $\ga$: 
$\varpi_1$, $\varpi_2$, $\varpi_3$, $2\varpi_1$, $2\varpi_3$, $3\varpi_3$, $\varpi_1 + \varpi_2$, $\varpi_1 + \varpi_3$, $\varpi_2 + \varpi_3$, $\varpi_1 + 2\varpi_3$.   Using known facts and dimension computations of L{\"u}beck \cite{L}, the only cases where $\nabla(\ga)$ is not simple are $\ga = 2\varpi_1$ or $\varpi_1 + \varpi_3$.  Both can satisfy \eqref{E:weightineq}.  In particular, for $\ga = 2\varpi_1$, we can have $\la - \mu_{(1)} = 6\rho - \varpi_1$, $6\rho - \varpi_2$, or $6\rho - 2\varpi_3$.  One can check that, in each case, $\la$ is not $G_1$-linked to $\mu_{(1)}$.  So this leaves only the second case of $\ga = \varpi_1 + \varpi_3$, which admits a large number of options for $\la$ (and $\mu_{(1)}$).  However $G_1$-linkage holds only in the following cases:

\vskip.1in
\begin{tabular}{|c|c|c|c|c|c|c|c|c|c|c|c|}
\hline
$\la$ & (6,5,5) & (6,4,5) & (6,5,4) & (5,5,5) & (5,5,4) & (5,5,4) &(5,5,5) & (5,4,5) & (4,5,5) & (4,5,4) & (3,5,5)\\
\hline
$\mu_{(1)}$ & (3,0,0) & (1,1,0) & (2,0,1) & (0,0,0) & (0,0,1) & (1,0,1) & (2,0,0)& (0,1,0) & (1,0,0) & (0,0,1) & (0,0,0)\\
\hline
\end{tabular}

\vskip.1in
For type $C_3$, we are again done if $p > 7$.  In this case $\al_0^{\vee} = \al_1^{\vee} + 2\al_2^{\vee} + 2\al_3^{\vee}$ and $\ta^{\vee} = \al_1^{\vee} + \al_2^{\vee} + \al_3^{\vee}$, from which we see that $\langle\rho,\al_0^{\vee}\rangle = 5$ and $\langle\rho,\ta^{\vee}\rangle = 3$.
 
For $p = 2$, Proposition \ref{P: boundonweight} reduces us to $\la = \varpi_2 + \varpi_3$.  
The only weight $\ga$ satisfying \eqref{E:weightineq} is $\ga = \varpi_1$, which is miniscule. Hence, $\nabla(\varpi_1)$ is simple.

For $p = 3$, \eqref{E:innerineq} reduces us to $\ga = \varpi_i$ for some $1 \leq i \leq 3$.  Again, $\nabla(\varpi_1)$ is simple.  Premet and Suprunenko \cite{PrSu} showed that $\nabla(\varpi_2)$ is simple if and only if $p \neq 3$ and $\nabla(\varpi_3)$ is simple if and only if $p \neq 2$.  So we are reduced to the case $\ga = \varpi_2$, which does satisfy \eqref{E:weightineq} for many values of $\la - \mu_{(1)}$. The following table summarizes the possible cases where $\la$ is $G_1$-linked to $\mu_{(1)}$.

\vskip.1in
\begin{tabular}{|c|c|c|c|c|}
\hline
$\la$ & (2,1,2) & (2,1,2) & (2,2,1) & (2,0,2)\\
\hline
$\mu_{(1)}$ & (0,0,0) & (0,1,0) & (1,0,0) & (0,0,0)\\
\hline
\end{tabular}

\vskip.1in\noindent
With some further investigation, we can eliminate two of those four options.  For $\nu \in X_1$, set $Z_1(\nu) := \operatorname{coind}_{B_1^+T}^{G_1T}(\nu)$.  First, suppose that $\la = (2,0,2)$ and $\mu_{(1)} = (0,0,0)$ (the last case).  Then $\mu = (p-1)\rho$ and $L(\mu) \cong \St_1$.  We are reduced to showing that $\operatorname{pr}_{(p-1)\rho}(\St_1\otimes L(\la))$ has a good filtration or, equivalently, that $N:= \operatorname{Hom}_{G_1}(\St_1, \St_1\otimes L(\la))^{(-1)}$  has a good filtration. This can only fail if $L(\varpi_2)$ is a composition factor of $N$.  The weight-space multiplicity of $5 \varpi_2$ in $\St_1$ is one. It follows that $Z_1(2 \rho + 5 \varpi_2)=Z_1(2(p-1)\rho -\la + p\varpi_2)$ appears exactly once as a section of the $G_1T$-module $\St_1 \otimes \St_1$. Moreover, the only weights in $\St_1 \otimes \St_1$ that are higher than $2(p-1)\rho -\la + p\varpi_2$ are $4 \rho$, $4 \rho- \alpha_1$, and $4 \rho - \alpha_3.$ The weight $2(p-1)\rho -\la + p\varpi_2$ is not strongly linked to any of these. Hence,  $2(p-1)\rho -\la + p\varpi_2$ is a maximal weight inside $\St_1 \otimes \St_1$.  It follows that $Q_1(\la + p \varpi_2)$ appears exactly once as a summand of the $G_1T$-module $\St_1 \otimes \St_1$ and that $L(\varpi_2)$ appears exactly once as a composition factor of $N$.

By the same argument we can conclude that the induced module $\nabla (2(p-1)\rho -\la + p\varpi_2)$ also appears exactly once as a section in the good filtration of the $G$-module $\St_1 \otimes \St_1$.  The fact that $2(p-1)\rho -\la + p\varpi_2$ is a maximal weight in $\St_1\otimes\St_1$ says that the tilting module $T(2(p-1)\rho -\la + p\varpi_2) \cong T(2(p-1)\rho -\la) \otimes  T(\varpi_2)^{(1)}$ is a summand of the tilting module $\St_1 \otimes \St_1$. This implies that the tilting module $T(\varpi_2)$ is a summand of $N$. Since the multiplicity of $L(\varpi_2)$ in $N$ is one,  it appears inside this summand. All other composition factors $L(\eta)$ of $N$ satisfy $L(\eta) \cong \nabla (\eta)$. One concludes that $N$ is tilting, and thus has a good filtration, eliminating the weight $(2,0,2)$ from the above list.

Consider now the second case in the above list: $\la = (2,1,2)$ and $\mu_{(1)} = (0,1,0)$. Then $\mu = (p-1)\rho - \mu_{(1)} = \la$.  An argument similar to the preceding case shows that $\operatorname{pr}_{(p-1)\rho}(T(\la)\otimes L(\la))$ has a good filtration, which eliminates this case as well, leaving only the following unknown cases:

\vskip.1in
\begin{tabular}{|c|c|c|}
\hline
$\la$ & (2,1,2)  & (2,2,1) \\
\hline
$\mu_{(1)}$ & (0,0,0)  & (1,0,0) \\
\hline
\end{tabular}

\vskip.1in
For $p = 5$, \eqref{E:innerineq} reduces us to the following options for $\ga$: $\varpi_1$, $\varpi_2$, $\varpi_3$, $2\varpi_1$, $\varpi_1 + \varpi_2$, or $\varpi_1 + \varpi_3$.  Here we know the simplicity of $\nabla(\varpi_i)$ for each $i$.  From dimension computations of L{\"u}beck \cite{L}, $\nabla(2\varpi_1)$ is simple for $p > 2$, $\nabla(\varpi_1 + \varpi_2)$ is simple if and only if $p \neq 3$ or 7, and $\nabla(\varpi_1 + \varpi_3)$ is simple if and only if $p > 3$.  In particular, all are simple for $p = 5$.

For $p = 7$, \eqref{E:innerineq} reduces us to the following options for $\ga$: $\varpi_1$, $\varpi_2$, $\varpi_3$, $2\varpi_1$, $2\varpi_2$, $2\varpi_3$, $\varpi_1 + \varpi_2$, $\varpi_1 + \varpi_3$, $\varpi_2 + \varpi_3$.  From previous discussions, the only cases where $\nabla(\ga)$ is not simple are $\ga = 2\varpi_2, \varpi_1 + \varpi_2$.  Both can satisfy \eqref{E:weightineq}.  For $\ga = 2\varpi_2$, there are three options for $\la: 6\varpi_1 + 5\varpi_2 + 6\varpi_3$ (with $\mu_{(1)} = 0$); $4\varpi_1 + 6\varpi_2 + 6\varpi_3$ (with $\mu_{(1)} = 0$); and $5\varpi_1 + 6\varpi_2 + 6\varpi_3$ (with $\mu_{(1)} = \varpi_1)$.  However, one can directly check that in each case $\la$ and $\mu_{(1)}$ are not $G_1$-linked.  In the second case ($\ga = \varpi_1 + \varpi_2$), there are numerous values of $\la$ that satisfy \eqref{E:weightineq}, however $G_1$-linkage holds only in the following cases:

\vskip.1in
\begin{tabular}{|c|c|c|c|c|c|}
\hline
$\la$ & (6,5,5) & (6,4,5) & (6,5,4) & (5,5,5) & (4,5,5)\\
\hline
$\mu_{(1)}$ & (0,0,0) & (0,1,0) & (0,0,1) & (1,0,0) & (0,0,0)\\
\hline
\end{tabular}

\end{proof}

\subsection{Rank 4 groups.} In types $A_4$ and $D_4$, the claim also holds in almost all cases. While potentially problematic weights are not listed explicitly in the following theorem, some information is provided in the proof. 

\begin{theorem} Assume $G$ is of type $A_4$ or $D_4$ and $\la \in X_1$.  Then $\St_1\otimes L(\la)$ has a good filtration except possibly for the following cases:
\begin{itemize}
\item Type $A_4$ with $p = 5$,
\item Type $D_4$ with $p = 7$.
\end{itemize}
\end{theorem}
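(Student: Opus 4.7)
The plan is to follow exactly the template developed for rank three groups in Section~5.2. First, Theorem~\ref{T:reduces to r=1} reduces the problem to $r=1$. For type $A_4$ the Coxeter number is $h=5$, so Theorem~\ref{T: boundonp} disposes of all primes $p \geq 2h-4 = 6$, leaving only $p = 2, 3, 5$; for type $D_4$, $h=6$, so Theorem~\ref{T: boundonp} handles $p \geq 8$, leaving $p = 2, 3, 5, 7$. Within each of these small primes, Proposition~\ref{P: boundonweight} immediately handles any $\la \in X_1$ satisfying $\langle \la, \al_0^{\vee}\rangle \leq 2p-1$, which in characteristic two leaves only a small handful of weights to consider and in characteristic three cuts the list substantially.

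For the remaining weights I would follow the approach of Section~5.1. Suppose $\St_1 \otimes L(\ga)^{(1)}$ is a composition factor of $L(\mu) \otimes L(\la)$ for some $\mu \in X_1$ and $\ga \in X_+$; the goal is then to show $L(\ga) = \nabla(\ga)$. The inequalities \eqref{E:weightineq} and \eqref{E:innerineq}, applied both with $\al_0^{\vee}$ and with the highest root coroot $\ta^{\vee}$, produce a short list of candidate dominant weights $\ga$. For each candidate I then verify whether $\nabla(\ga)$ is simple, drawing on the standard toolkit: the minuscule and fundamental weight results, the Premet--Suprunenko criteria, the results of \cite{GGN}, and L{\"u}beck's explicit dimension tables \cite{L}. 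Candidates for which $\nabla(\ga) \ne L(\ga)$ are further pruned by requiring that $\la$ and $\mu_{(1)}$ be $G_1$-linked, which typically eliminates all but a very small number of residual pairs $(\la,\mu)$.

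Any stubborn residual pairs can then be attacked by the projection technique used in the $C_3$, $p=3$ case: reformulate the problem as whether $N := \Hom_{G_1}(\St_1, \St_1 \otimes L(\la))^{(-1)}$ has a good filtration, then analyze the $G_1T$-module structure of $\St_1 \otimes \St_1$. By identifying $Z_1(2(p-1)\rho - \la + p\ga)$ as a section, verifying maximality of the weight $2(p-1)\rho - \la + p\ga$ inside $\St_1 \otimes \St_1$, and using the strong linkage principle, one produces a tilting summand $T(2(p-1)\rho - \la) \otimes T(\ga)^{(1)}$ whose $G_1$-invariants contain the troublesome composition factor, forcing $N$ to be tilting. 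The main obstacle will be that as $p$ grows toward the bound $2h-4$, the list of surviving triples $(\ga,\mu,\la)$ grows rapidly and the candidate $\ga$ tend to yield non-simple $\nabla(\ga)$ which also survive $G_1$-linkage; this combinatorial explosion, together with the breakdown of the maximality arguments, is precisely why the cases $A_4$ with $p=5$ and $D_4$ with $p=7$ are left open in the statement.
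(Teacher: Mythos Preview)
Your outline matches the paper's approach closely. A few corrections are worth noting. The statement already concerns $r=1$, so Theorem~\ref{T:reduces to r=1} is not needed. Premet--Suprunenko applies only to type $C_n$ and plays no role here; for $A_4$ the relevant simplicity tool is Jantzen's algorithm \cite{Jan73}, and for $D_4$ it is L\"ubeck's tables. Since both $A_4$ and $D_4$ are simply-laced, $\alpha_0 = \tilde{\alpha}$, so the second coroot inequality in \eqref{E:innerineq} adds nothing. Most significantly, the projection/maximality technique from the $C_3$, $p=3$ argument is never invoked in the paper's proof of this theorem: for every prime actually handled ($A_4$ with $p=2,3$; $D_4$ with $p=2,3,5$) the purely combinatorial steps already suffice, because either every surviving $\gamma$ has $\nabla(\gamma)$ simple, or the non-simple candidates fail \eqref{E:weightineq} outright (and for $D_4$, $p=7$, the one case $\gamma = 2\varpi_2$ that passes \eqref{E:weightineq} is eliminated by $G_1$-linkage). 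Your fallback plan is not wrong, just unnecessary for this theorem.
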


\begin{proof} We first consider type $A_4$, where $h = 5$.  By Theorem \ref{T: boundonp}, we are done for $p > 5$.  For $p = 2$, the result follows from Proposition \ref{P: boundonweight}.

For $p = 3$, one could again eliminate many $\la$ via Proposition \ref{P: boundonweight}. However, we more directly focus on the weight $\ga$. First, \eqref{E:innerineq} reduces us to $\ga = \varpi_i + \varpi_j$ for $i, j \in \{1, 2, 3, 4\}$.  Of those, the only weights potentially satisfying \eqref{E:weightineq} are $\ga = \varpi_1 + \varpi_3$, $\varpi_2 + \varpi_4$, or $\varpi_1 + \varpi_4$.  However, in each case $\nabla(\ga)$ is simple, as can be seen by using Jantzen's algorithm \cite[Satz 9]{Jan73} (cf. also \cite[II.8.21]{rags}) for checking the simplicity of a standard induced module in type $A_n$.

For $p = 5$, using \eqref{E:innerineq}, \eqref{E:weightineq}, and Jantzen's algorithm for simplicity, one can reduce the problem to just one possible value of $\ga$: $\varpi_1 + \varpi_4 = \al_{0}$.  We have the following values of $\la$ and $\mu_{(1)}$ which are $G_1$-linked and satisfy \eqref{E:weightineq}.

\vskip.1in
\begin{tabular}{|c|c|c|c|c|c|c|c|}
\hline
$\la$ & (4,3,3,4) & (4,3,2,4) & (4,2,3,4) & (3,3,2,4) & (3,2,3,4) & (4,3,2,3) & (4,2,3,3)\\
\hline
$\mu_{(1)}$ & (2,0,0,2) & (1,1,0,1) & (1,0,1,1) & (0,1,0,1) & (0,0,1,1) & (1,1,0,0) & (1,0,1,0)\\
\hline
\end{tabular}

\vskip0.1in
\begin{tabular}{|c|c|c|c|c|c|c|c|}
\hline
$\la$ & (4,3,3,3) & (3,3,3,4) & (2,3,3,4) & (4,3,3,2) & (4,3,1,4) & (4,1,3,4) & (4,2,2,4)\\  
\hline
$\mu_{(1)}$ & (2,0,0,1) & (1,0,0,2) & (0,0,0,2) & (2,0,0,0) & (0,2,0,0) & (0,0,2,0) & (0,1,1,0)\\
\hline
\end{tabular}

\vskip0.1in
\begin{tabular}{|c|c|c|c|c|c|c|c|}
\hline
$\la$ & (3,3,3,3) & (3,3,3,3) & (2,3,3,3) & (3,2,3,3) & (3,3,2,3) & (3,3,3,2) & (2,3,3,2)\\
\hline
$\mu_{(1)}$ & (1,0,0,1) & (0,0,0,0) & (0,0,0,1) & (0,0,1,0) & (0,1,0,0) & (1,0,0,0) & (0,0,0,0)\\
\hline
\end{tabular}

\vskip.1in
For type $D_4$, $h = 6$. By Theorem \ref{T: boundonp}, we are done if $p > 7$.  We have $\al_0^{\vee} = \ta^{\vee} = \al_1^{\vee} + 2\al_2^{\vee} + \al_3^{\vee} + \al_4^{\vee}$, and so $\langle\rho,\al_0^{\vee}\rangle = 5$.

For $p = 2$, using \eqref{E:innerineq} and \eqref{E:weightineq}, we are reduced to $\ga = \varpi_1$, $\varpi_2$, or $\varpi_3$. But all those weights are miniscule, giving a simple $\nabla(\ga)$.  

For $p = 3$, many values of $\ga$ satisfy \eqref{E:innerineq}.  Using dimension computations of L{\"u}beck \cite{L}, one finds that the only cases where $\nabla(\ga)$ is not simple are as follows: $3\varpi_1$, $3\varpi_3$, $3\varpi_4$, $\varpi_1 + \varpi_2$, $\varpi_2 + \varpi_3$, $\varpi_2 + \varpi_4$, and $\varpi_1 + \varpi_3 + \varpi_4$.  By direct verification, none of these can satisfy \eqref{E:weightineq}.

For $p = 5$, similarly, \eqref{E:innerineq} and dimension computations of L{\"u}beck \cite{L} reduce us to the following options for $\ga$: $3\varpi_i$, $i \in \{1, 3, 4\}$; $4\varpi_i$, $i \in \{1, 3, 4\}$; $2\varpi_i + \varpi_j$, $i, j \in \{1, 3, 4\}$, $i \neq j$; $3\varpi_i + \varpi_j$, $i, j \in \{1, 3, 4\}$, $i \neq j$; $2\varpi_i + 2\varpi_j$, $i, j \in \{1, 3, 4\}$, $i \neq j$; $\varpi_i + 2\varpi_2$, $i \in \{1, 3, 4\}$; and $\varpi_2 + \varpi_i + \varpi_j$, $i, j \in \{1, 3, 4\}$, $i \neq j$.  One then checks that \eqref{E:weightineq} fails to hold in all cases.

For $p = 7$, as above, \eqref{E:innerineq} and L{\"u}beck's computations reduce us to the following options for $\ga$: $\varpi_i + \varpi_2$, $i \in \{1, 3, 4\}$, $2\varpi_2$.  Unfortunately, \eqref{E:weightineq} can hold here.  In the case $\ga = 2\varpi_2$, the only values of $\la$ and $\mu_{(1)}$ that work are as follows:

\vskip0.1in
\begin{tabular}{|c|c|c|c|c|c|c|c|}
\hline
$\la$ & (6,5,6,6) & (4,6,6,6) & (6,6,4,6) & (6,6,6,4) & (5,6,6,6) & (6,6,5,6) & (6,6,6,5)\\
\hline
$\mu_{(1)}$ & (0,0,0,0) & (0,0,0,0) & (0,0,0,0) & (0,0,0,0) & (1,0,0,0) & (0,0,1,0) & (0,0,0,1)\\
\hline
\end{tabular}

\vskip0.1in\noindent
One can check that in each case $\la$ is not $G_1$-linked to $\mu_{(1)}$. So that case is eliminated.  

In the first (symmetric) cases for $\ga$, there are options for $\la$ and $\mu_{(1)}$ where linkage holds.  For example, for $\ga = \varpi_1 + \varpi_2$, one has the following cases where $G_1$-linkage holds between $\la$ and $\mu_{(1)}$:

\vskip.1in
\begin{tabular}{|c|c|c|c|c|}
\hline
$\la$ & (6,4,5,5) & (4,5,5,5) & (5,5,5,5) & (6,5,5,5) \\
\hline
$\mu_{(1)}$ & (0,1,0,0) & (0,0,0,0) & (1,0,0,0) & (2,0,0,0) \\
\hline
\end{tabular}

\vskip.1in\noindent
Similar cases would exist for $\mu_1 = \varpi_2 + \varpi_3$ and $\mu_1 = \varpi_2 + \varpi_4$.  Additional cases may also exist, as a complete list has not been computed.
\end{proof}

For types $B_4$ and $C_4$, $h = 8$, and the claim holds for $p > 7$.  No investigation has been made for small primes.  

For type $F_4$, $h = 12$, and we are done if $p > 17$.  We make some observations for $p = 2$. We have $\al_0^{\vee} = 2\al_1^{\vee} + 4\al_2^{\vee} + 3\al_3^{\vee} + 2\al_4^{\vee}$ and $\ta^{\vee} = 2\al_1^{\vee} + 3\al_2^{\vee} + 2\al_3^{\vee} + \al_4^{\vee}$.  So $\langle\rho,\al_0^{\vee}\rangle = 11$ and $\langle\rho,\ta^{\vee}\rangle = 8$. The inequalities in \eqref{E:innerineq} force $\langle\ga,\al_0^{\vee}\rangle \leq 5$ and $\langle\ga,\ta^{\vee}\rangle \leq 3$.  One has the following options: $\ga = \varpi_1$, $\varpi_2$, $\varpi_3$, $\varpi_4$, $2\varpi_4$, $\varpi_1 + \varpi_4$, or $\varpi_3 + \varpi_4$.  The case $\ga = \varpi_3 + \varpi_4$ may be eliminated as it does not satisfy \eqref{E:weightineq}, and the case case $\ga = \varpi_4$ may be eliminated as $\nabla(\varpi_4)$ is simple (for $p = 2$) by \cite{Jan91}.  One finds the following possibilities where \eqref{E:weightineq} holds and $\la$ is $G_1$-linked to $\mu_{(1)}$:

\vskip.1in
\begin{tabular}{|c|c|}
\hline
$\la$ & $\mu_{(1)}$\\
\hline
(1,1,0,1) & (0,0,0,0), (0,0,1,0), (0,0,0,1), (0,0,0,2)\\
\hline
(1,1,1,0) & (0,0,0,0), (1,0,0,0), (0,0,1,0), (0,0,0,1), (1,0,0,1)\\
\hline
(0,1,1,0) & (0,0,0,1)\\
\hline
(1,1,0,0) & (0,0,0,1)\\
\hline
\end{tabular}

\subsection{Type $A_5$.}\label{S:A5} For type $A_5$, $h = 6$.  By Theorem \ref{T: boundonp}, we are done for $p > 7$.  In contrast to the smaller rank cases, even for $p = 2$, our earlier methods do not completely resolve the issue.  Using \eqref{E:innerineq}, \eqref{E:weightineq}, and Jantzen's simplicity algorithm, one is reduced to one case: $\la = \varpi_1 + \varpi_2 + \varpi_4 + \varpi_4$ with $\mu_{(1)} = 0$ and $\ga = \varpi_1 + \varpi_5$.    Note that $\la$ is indeed $G_1$-linked to the zero weight.  However, through an intricate analysis of the modules involved, in Section \ref{S:char2}, we are able to address this case.  See Theorem \ref{T:typeA5}. For $p = 3$, 5, or 7, there will be many more options for $\la$ that cannot be dealt with by the above methods.

\subsection{Summary.} For $\la \in X_1$, $\St_1\otimes L(\la)$ has a good filtration in the following cases:

\begin{itemize}
\item Type $A_n$: $p > 2n - 3$
\subitem $\bullet$ Type $A_2$: all primes
\subitem $\bullet$ Type $A_3$: all primes 
\subitem $\bullet$ Type $A_4$: $p \neq 5$
\subitem $\bullet$ Type $A_5$: $p \neq 3, 5, 7$
\item Type $B_n$: $p > 4n - 5$
\subitem $\bullet$ Type $B_2$ (equivalently $C_2$): all primes
\subitem $\bullet$ Type $B_3$: $p \neq 7$
\subsubitem $\bullet$ $p = 7$ case, all except
$\la = (6,5,5), (6,4,5), (6,5,4), (5,5,5), (5,5,4), (5,5,4)$, 
\subsubitem $(5,4,5), (4,5,5), (4,5,4), \text{ or } (3,5,5)$
\item Type $C_n$: $p > 4n - 5$
\subitem $\bullet$ Type $C_3$: $p \neq 3, 7$
\subsubitem $\bullet$ $p = 3$ case, all except $\la = (2,1,2)$ or $(2,2,1)$
\subsubitem $\bullet$ $p = 7$ case, all except $\la = (6,5,5), (6,4,5), (6,5,4), (5,5,5), \text{ or } (4,5,5)$
\item Type $D_n$: $p \geq 4n - 9$
\subitem $\bullet$ Type $D_4$: $p \neq 7$
\item Type $E_6$: $p > 19$
\item Type $E_7$: $p > 31$
\item Type $E_8$: $p > 53$
\item Type $F_4$: $p > 19$
\subitem $\bullet$ $p = 2$ case, reduced to $\la = (1,1,0,1), (1,1,1,0), (0,1,1,0) \text{ or } (1,1,0,0)$
\item Type $G_2$: all primes
\end{itemize}

\section{A Detailed Analysis In Characteristic $2$}\label{S:char2}

In this section we investigate two very similar situations in which a proof that $\St_1 \otimes L(\lambda)$ has a good filtration is beyond the reach of our earlier arguments.  In particular, basic weight combinatorics are not conclusive, and thus it becomes necessary to better understand the submodule structure of a tensor product of $G$-modules.  We show in one of the two cases that we are able to verify that $\St_1\otimes L(\la)$ does have a good filtration, which allows us to conclude that Conjecture \ref{donkinconj} ($\Rightarrow$) holds.  That this holds in such a nontrivial setting could be viewed as the strongest evidence yet for its truth in arbitrary characteristic.  However, if this is indeed true, one will need to find the underlying reason why it holds in situations similar to those considered here.

\subsection{}
Unless otherwise noted, we assume throughout this section that $p=2$.  Further, we assume that (i) $G=SL_6$ (type $A_5$) and $\lambda = \varpi_1+\varpi_2+\varpi_4+\varpi_5$ or (ii) $G$ is of type $E_7$ and $\lambda = \varpi_4$.  In either case, $2\alpha_0$ appears as a weight in $L(\lambda)$.  Therefore $\St_1 \otimes L(\alpha_0)^{(1)}$ is a composition factor of $\St_1 \otimes L(\lambda)$.  At the same time, $\Delta(\alpha_0) \cong \mathfrak{g}$ (the adjoint representation), and $L(\alpha_0) \cong \mathfrak{g}/\mathfrak{z}(\mathfrak{g})$, with $\mathfrak{z}(\mathfrak{g})$ denoting the one-dimensional center of $\mathfrak{g}$.  Since $L(\alpha_0)^{(1)} \subsetneq \nabla(\alpha_0)^{(1)}$, by \cite[Proposition II.3.19]{rags},
$$\St_1 \otimes L(\alpha_0)^{(1)} \subsetneq \St_1 \otimes \nabla(\alpha_0)^{(1)} \cong \nabla(\rho+2\alpha_0),
$$
and so the composition factor $\St_1 \otimes L(\alpha_0)^{(1)}$ does not have a good filtration.  We note that $G$ is simply-laced, so that $\alpha_0$ is the highest root.  For $SL_6$, we have $\alpha_0 = \varpi_1+\varpi_5$, while, for $E_7$, $\alpha_0=\varpi_1$.

For $G=SL_6$, the other dominant weights $\gamma$ such that $2\gamma$ is a weight of $\Delta(\lambda)$ are
$$\gamma = 0, \varpi_3.$$
For $G$ of type $E_7$, they are
$$\gamma = 0, \varpi_7.$$
For $G=SL_6$, we have $L(\varpi_3)\cong \nabla(\varpi_3)$, and for $G$ of type $E_7$, we have $L(\varpi_7) \cong \nabla(\varpi_7)$.  Neither module (for the given $G$) extends nor can be extended by $L(\alpha_0)$.  Furthermore, we have in both cases that
$$\text{Ext}_G^1(k,L(\alpha_0)) \cong \text{Ext}_G^1(L(\alpha_0),k) \cong k.$$
These one-dimensional extension groups are accounted for by the indecomposable modules $\Delta(\alpha_0)$ and $\nabla(\alpha_0)$.  One may further check by standard long exact sequence computations that
$$\text{Ext}_G^1(\nabla(\alpha_0),L(\alpha_0)) = 0; \quad \text{Ext}_G^1(L(\alpha_0),\nabla(\alpha_0))=0;$$
$$\text{Ext}_G^1(k,\nabla(\alpha_0)) = 0; \quad \text{Ext}_G^1(\nabla(\alpha_0),k)  \cong k.$$
Applying the $\tau$-functor, which interchanges Weyl and induced modules, while preserving simple (and tilting) modules, we obtain all extensions involving $\Delta(\alpha_0)$, $k$, and $L(\alpha_0)$.  Summarizing, the collection of indecomposable $G$-modules having composition factors coming from the collection $\{k,L(\alpha_0)\}$ are (up to isomorphism)
$$\{k,\;L(\alpha_0),\;\nabla(\alpha_0),\; \Delta(\alpha_0),\;T(\alpha_0)\}.$$
The structure of the tilting module $T(\alpha_0)$ is given by the exact sequence
$$0 \rightarrow k \rightarrow T(\alpha_0) \rightarrow \nabla(\alpha_0) \rightarrow 0.$$
Via the equivalence of categories between $G$-mod and its Steinberg block, it follows that an indecomposable summand of $\St_1 \otimes L(\lambda)$ that contains $\St_1 \otimes L(\alpha_0)^{(1)}$ as a composition factor must be isomorphic to one of the following:
$$\{\St_1 \otimes L(\alpha_0)^{(1)},\St_1 \otimes \nabla(\alpha_0)^{(1)},\St_1 \otimes \Delta(\alpha_0)^{(1)},\St_1 \otimes T(\alpha_0)^{(1)}\}.$$
Note that, if we instead work with $\St_1 \otimes \Delta(\lambda)$, the only possibilities from this list are the two involving the Weyl module or the tilting module. One can 
also make the deduction about the module structures above by working with the truncated category obtained by looking at the full subcategory of rational $G$-modules having 
composition factors with highest weight less than or equal to $\alpha_{0}$ (and linked to $\alpha_{0}$). This category has finite representation type. 

\begin{lemma}\label{L:nosubmodule}
Assume $p = 2$. Let $G$ be of type $A_5$ with $\la = \varpi_1 + \varpi_2 + \varpi_4 + \varpi_5$ or $G$ be of type $E_7$ with $\la = \varpi_4$.  The summands of $\St_1 \otimes L(\lambda)$ containing the composition factor $\St_1 \otimes L(\alpha_0)^{(1)}$ all have a good filtration if and only if
$$\textup{Hom}_G(\St_1 \otimes L(\alpha_0)^{(1)},\St_1 \otimes L(\lambda)) = 0.$$
\end{lemma}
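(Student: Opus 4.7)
The plan is to exploit the self-duality of $M := \St_1 \otimes L(\lambda)$.  In both cases, $\lambda = \lambda^*$: in type $A_5$, $\varpi_i^* = \varpi_{6-i}$ makes $\lambda = \varpi_1+\varpi_2+\varpi_4+\varpi_5$ symmetric, and in type $E_7$, $w_0 = -\operatorname{id}$ so every weight is self-dual.  Combined with self-duality of $\St_1 = L((p-1)\rho)$, this gives $M \cong M^{\tau}$, where $\tau$ is the contravariant duality fixing simple and tilting modules and interchanging $\nabla(\mu)$ with $\Delta(\mu)$.

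For each of the four possible summand types
\[
S_L := \St_1\otimes L(\al_0)^{(1)},\ S_{\nabla} := \St_1\otimes\nabla(\al_0)^{(1)},\ S_{\Delta} := \St_1\otimes\Delta(\al_0)^{(1)},\ S_T := \St_1\otimes T(\al_0)^{(1)},
\]
I would determine (i) whether the simple module $L := L(\rho+2\al_0) = \St_1\otimes L(\al_0)^{(1)}$ (by Steinberg's tensor product theorem) lies in the socle, and (ii) whether the summand admits a good filtration.  Tensoring $\St_1$ with the (Frobenius-twisted) sequence $0 \to k \to T(\al_0) \to \nabla(\al_0) \to 0$ yields $0 \to \St_1 \to S_T \to \nabla(\rho+2\al_0) \to 0$, and tensoring with the Weyl filtration $0 \to \Delta(\al_0) \to T(\al_0) \to k \to 0$ yields $0 \to S_{\Delta} \to S_T \to \St_1 \to 0$.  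Combined with indecomposability of $S_T$ (implicit in the list of summand types coming from the truncated-category argument), these two sequences present $S_T$ as uniserial with Loewy layers $\St_1,\, L,\, \St_1$ from bottom to top.  Consequently $\soc(S_T) = \soc(S_{\Delta}) = \St_1$ and $\soc(S_L) = \soc(S_{\nabla}) = L$, so $\dim\Hom_G(L,\cdot)$ is $1$ on $S_L$ and $S_{\nabla}$, and $0$ on $S_{\Delta}$ and $S_T$.  Meanwhile $S_{\nabla} = \nabla(\rho+2\al_0)$ and $S_T$ (tilting, being a tensor of tiltings) both carry good filtrations, whereas $S_L$ and $S_{\Delta}$ do not (for $S_{\Delta}$, the only candidate arrangement of induced modules is ruled out by its socle being $\St_1$ rather than $L$).

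The final step invokes self-duality.  Under $\tau$, the types $S_L$ and $S_T$ are fixed, while $S_{\nabla}^{\tau} \cong S_{\Delta}$.  Applying the Krull--Schmidt theorem to $M \cong M^{\tau}$, the multiplicities of $S_{\nabla}$ and $S_{\Delta}$ as summands of $M$ must coincide.  Writing $c, a, b, d$ for the multiplicities of $S_L, S_{\nabla}, S_{\Delta}, S_T$ respectively, we have $a = b$, and $\dim\Hom_G(L,M) = c + a$.  Thus Hom vanishing ($c = a = 0$) is equivalent to $c = b = 0$, which is precisely the condition that no summand of $M$ containing $L$ as composition factor fails to have a good filtration.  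The main technical obstacle I anticipate is the Loewy/socle analysis of $S_T$ needed to ensure $L \not\subseteq \soc(S_T)$; once this is in hand, the remainder of the argument is a short application of Krull--Schmidt and the self-duality of $M$.
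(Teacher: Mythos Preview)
Your argument is correct and follows essentially the same route as the paper: the four-type classification of relevant summands, identification of which have $L$ in the socle, and the $\tau$-self-duality of $\St_1\otimes L(\lambda)$ to pair up summand types via Krull--Schmidt. The paper's version is slightly terser (it observes that Hom-vanishing forces all relevant summands to have a Weyl filtration, then applies $\tau$ globally rather than counting multiplicities), and note that your verification of $\lambda=\lambda^*$ is unnecessary since $\tau$ fixes every simple module regardless.
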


\begin{proof}
By the previous discussion, if $\textup{Hom}_G(\St_1 \otimes L(\alpha_0)^{(1)},\St_1 \otimes L(\lambda)) = 0$, then the only relevant summands that can appear in $\St_1 \otimes L(\lambda)$ are of the form $\St_1 \otimes T(\alpha_0)^{(1)}$ and $\St_1 \otimes \Delta(\alpha_0)^{(1)}$, thus these summands have a Weyl filtration.  Now, $\St_1 \otimes L(\lambda)$ is $\tau$-invariant.  Further, if $M$ is a summand of $\St_1 \otimes L(\lambda)$, then $^{\tau}M$ is isomorphic to a summand of $\St_1 \otimes L(\lambda)$ having the same composition factors.  It follows then that a summand containing the factor $\St_1 \otimes L(\alpha_0)^{(1)}$ also has a good filtration.

The converse is established by the reverse implication of each step in the argument.
\end{proof}

\subsection{}
In order to analyze this situation further, we need to establish some basic facts about tensor products of modules.  Note that the results in this subsection hold under our general assumption on $G$, and Lemmas \ref{L:FrobTensor} and \ref{L:simpletensorsum} hold for arbitrary primes $p$.  Recall that for a $B$-module $M$, there is an ``evaluation map'' $\varepsilon_M: \textup{ind}_B^G M \rightarrow M$ which induces one direction of the Frobenius reciprocity bijection \cite[I.3.4]{rags}.

\begin{lemma}\label{L:FrobTensor}
Let $M$ be a $B$-module and $N,N^{\prime}$ be $G$-modules.  Composition with the $B$-module homomorphism $\varepsilon_M \otimes \textup{id}$ defines a bijection
$$\textup{Hom}_G(N^{\prime},(\textup{ind}_B^G M) \otimes N) \xrightarrow{\sim} \textup{Hom}_B(N^{\prime},M \otimes N).$$
\end{lemma}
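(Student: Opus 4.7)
The plan is to derive the claimed bijection from two standard ingredients: the tensor identity for induction and ordinary Frobenius reciprocity. The tensor identity (cf.~\cite[Prop.~I.3.6]{rags}) supplies a natural $G$-module isomorphism
$$
\Phi\colon \ind_B^G(M\otimes N|_B)\xrightarrow{\sim}(\ind_B^G M)\otimes N,
$$
so composition with $\Phi$ produces a bijection
$$
\Hom_G(N',(\ind_B^G M)\otimes N)\cong \Hom_G(N',\ind_B^G(M\otimes N|_B)).
$$
Standard Frobenius reciprocity~\cite[Prop.~I.3.4]{rags} then identifies the right-hand side with $\Hom_B(N',M\otimes N)$ via post-composition with the evaluation map $\varepsilon_{M\otimes N}$. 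Chaining these two bijections yields a natural bijection between the desired source and target.

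All that remains is to identify this composite with post-composition by $\varepsilon_M\otimes\Id_N$. Equivalently, one must verify the single commutative triangle of $B$-module maps
$$
(\varepsilon_M\otimes\Id_N)\circ\Phi=\varepsilon_{M\otimes N}\colon \ind_B^G(M\otimes N|_B)\to M\otimes N.
$$
This identity is built into the construction of the tensor identity: $\Phi$ is defined by using the $G$-coaction on $N$ to ``absorb'' the $B$-action on the $N$-slot, twisting an equivariant function $f\colon G\to M\otimes N$ into $\widetilde f(g)=(\Id_M\otimes g)f(g)$ (which now has constant $N$-component up to the $G$-twist). Evaluating at $1\in G$ leaves $f(1)$ untouched, so
$$
\varepsilon_{M\otimes N}(f)=f(1)=(\varepsilon_M\otimes\Id_N)(\Phi(f)).
$$
The only substantive step in the proof is this routine verification; it follows directly from the explicit formula for $\Phi$ in~\cite[I.3.6]{rags}. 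Once established, the lemma follows by assembling the two natural bijections above. I expect no real obstacle, since the content is entirely bookkeeping in the framework of induction from $B$ to $G$; the only point one must be careful about is keeping the $B$- versus $G$-actions straight in the twisted tensor products.
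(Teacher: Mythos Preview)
Your proposal is correct and follows essentially the same route as the paper: combine Frobenius reciprocity with the tensor identity, then check that the composite bijection is given by post-composition with $\varepsilon_M\otimes\Id_N$. The paper verifies the compatibility triangle by observing that both $\varepsilon_{M\otimes N}$ and $\varepsilon_M\otimes\Id_N$ arise by restricting $\Id\otimes\Id\otimes\varepsilon_G$ on $M\otimes N\otimes k[G]$ to the two canonically embedded copies of the induced module, which is the same check you phrase as ``evaluate at $1$.''
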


\begin{proof}
As recalled above, composition with $\varepsilon_{M \otimes N}$ defines a bijection
$$\textup{Hom}_G(N^{\prime},\textup{ind}_B^G (M \otimes N)) \xrightarrow{\sim} \textup{Hom}_B(N^{\prime},M \otimes N).$$
The ``tensor identity'' in \cite[I.3.6]{rags} is established by a canonical isomorphism
$$\textup{ind}_B^G (M \otimes N) \xrightarrow{\sim} (\textup{ind}_B^G M) \otimes N.$$
This isomorphism is specified via canonical embeddings
$$\textup{ind}_B^G (M \otimes N) \hookrightarrow M \otimes N \otimes k[G] \hookleftarrow (\textup{ind}_B^G M) \otimes N,$$
together with an automorphism of $M \otimes N \otimes k[G]$ that sends the embedding on the left isomorphically onto the embedding on the right.  Now, the morphisms $\varepsilon_M \otimes \text{id}$ and $\varepsilon_{M \otimes N}$ both come from these embeddings, by restricting the map
$$\text{id} \otimes \text{id} \otimes \varepsilon_G: M \otimes N \otimes k[G] \rightarrow M \otimes N \otimes k,$$
where $\varepsilon_G$ is the counit map on $k[G]$, to each embedded subgroup.  This proves the claim.
\end{proof}

\begin{lemma}\label{L:simpletensorsum}
Let $\mu,\lambda \in X(T)_+$, and let $v_{\mu}$ and $z_{\lambda}$ denote highest weight vectors of the modules $\Delta(\mu)$ and $L(\lambda)$ respectively.  Let $M$ be any $G$-module.  If
$$\phi: \Delta(\mu) \rightarrow M \otimes L(\lambda)$$
is a non-zero homomorphism of $G$-modules, then there is some $0 \ne m \in M$ such that
$$\phi(v_{\mu})=(m \otimes z_{\lambda}) + y, \quad \textup{with } y \in M \otimes \left(\sum_{\sigma< \lambda} L(\lambda)_{\sigma}\right).$$
\end{lemma}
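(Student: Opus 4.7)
The plan is to argue by contradiction, using the fact that $v_\mu$ generates $\Delta(\mu)$ under the action of the opposite unipotent. First I would observe that since $L(\la)_\la = kz_\la$ is one-dimensional, the vector space decomposition $L(\la)=kz_\la\oplus L(\la)^{<}$, where $L(\la)^{<} := \sum_{\sigma<\la} L(\la)_\sigma$, gives a unique expression $\phi(v_\mu) = m\otimes z_\la + y$ with $m\in M$ and $y\in M\otimes L(\la)^{<}$. The lemma is thus equivalent to showing $m\neq 0$.

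Now suppose, toward a contradiction, that $m = 0$, so $\phi(v_\mu) \in M\otimes L(\la)^{<}$. The first key observation is that $L(\la)^{<}$ is stable under $B$ (in particular under $U$): in the paper's conventions $U$ corresponds to the negative roots, so any element of $\Dist(U)$ sends a weight-$\sigma$ vector to a combination of vectors of weights $\nu\leq\sigma$, and these satisfy $\nu<\la$ whenever $\sigma<\la$. Thus $M\otimes L(\la)^{<}$ is $\Dist(U)$-stable. Since $v_\mu$ is a highest weight vector, $\Delta(\mu)=\Dist(U)\cdot v_\mu$, and hence
$$\phi(\Delta(\mu)) \;=\; \Dist(U)\cdot \phi(v_\mu) \;\subseteq\; M\otimes L(\la)^{<}.$$

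The heart of the proof is then to show that no nonzero $G$-submodule of $M\otimes L(\la)$ can lie entirely in $M\otimes L(\la)^{<}$; once this is established, $\phi(\Delta(\mu))=0$, contradicting $\phi\neq 0$. For this I would let $z_\la^*\in L(\la)^*$ denote the linear functional dual to $z_\la$ under the chosen decomposition, so that $M\otimes L(\la)^{<} = \ker(\operatorname{id}_M \otimes z_\la^*)$. Taking any $x=\sum_i m_i\otimes\ell_i$ in such a submodule (with both families chosen linearly independent), the condition that every $g$-translate of $x$ lies in $M\otimes L(\la)^{<}$ becomes $\sum_i (gm_i)\cdot z_\la^*(g\ell_i) = 0$ for every $g\in G$. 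Linear independence of $\{gm_i\}$ in $M$ forces $z_\la^*(g\ell_i)=0$ for all $g$ and $i$, and the simplicity of $L(\la)$ then forces $z_\la^*\equiv 0$ on $L(\la)$, which is absurd.

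The main obstacle is this last step, where the simplicity of $L(\la)$ must be leveraged to rule out arbitrary nonzero $G$-submodules of $M\otimes L(\la)$ sitting inside the (non-$G$-stable) subspace $M\otimes L(\la)^{<}$. The rest of the argument is purely weight bookkeeping, combined with the standard fact that a Weyl module is generated by its highest weight vector under the opposite unipotent.
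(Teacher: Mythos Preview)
Your argument is correct, but it takes a different route from the paper's. The paper embeds $L(\lambda)$ into $\nabla(\lambda)$, then applies the tensor-identity form of Frobenius reciprocity (Lemma~\ref{L:FrobTensor}): composition with $\mathrm{id}\otimes\varepsilon_\lambda$ gives a bijection $\Hom_G(\Delta(\mu),M\otimes\nabla(\lambda))\cong\Hom_B(\Delta(\mu),M\otimes\lambda)$, and a nonzero $B$-map must be nonzero on $v_\mu$. Since $\mathrm{id}\otimes\varepsilon_\lambda$ kills exactly $M\otimes(\sum_{\sigma<\lambda}\nabla(\lambda)_\sigma)$, the conclusion follows. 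You instead argue directly: $M\otimes L(\lambda)^{<}$ is $\Dist(U)$-stable, so if $\phi(v_\mu)$ lands there then so does all of $\phi(\Delta(\mu))$; then you show that a nonzero $G$-submodule of $M\otimes L(\lambda)$ cannot sit inside $M\otimes L(\lambda)^{<}$ via the tensor-rank / simplicity argument.

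Each has its advantages. The paper's proof is more conceptual and in fact establishes the result with $\nabla(\lambda)$ in place of $L(\lambda)$, since simplicity is used only through the embedding $L(\lambda)\subseteq\nabla(\lambda)$. Your argument is more self-contained---it avoids the tensor identity and the evaluation-map machinery entirely---but it uses the simplicity of $L(\lambda)$ in an essential way (the step ``$G\cdot\ell_i$ spans $L(\lambda)$''), so it does not obviously extend to $\nabla(\lambda)$. One small remark: when you invoke ``for all $g\in G$'' and conclude that the span of the orbit is a submodule, you are implicitly using that $k$ is algebraically closed and $G$ is reduced, so that $G(k)$-stable subspaces of rational modules are $G$-submodules; this is fine here but worth noting.
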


\begin{proof}
There is a canonical inclusion
$$\text{Hom}_G(\Delta(\mu),M \otimes L(\lambda)) \hookrightarrow \text{Hom}_G(\Delta(\mu),M \otimes \nabla(\lambda)).$$
By Lemma \ref{L:FrobTensor}, the $B$-module homomorphism
$$M \otimes \nabla(\lambda) \xrightarrow{\text{id} \otimes \varepsilon_{\lambda}} M \otimes \lambda$$
induces a bijection
$$\text{Hom}_G(\Delta(\mu),M \otimes \nabla(\lambda)) \cong \text{Hom}_B(\Delta(\mu),M \otimes \lambda).$$
As a $B$-module, $\Delta(\mu)$ is generated by $v_{\mu}$, thus a non-zero $B$-homomorphism from $\Delta(\mu)$ to any $B$-module must send $v_{\mu}$ to a non-zero element.  We have a vector space decomposition
$$M \otimes \nabla(\lambda) = M \otimes z_{\lambda} +  M \otimes \left(\sum_{\sigma<\lambda} \nabla(\lambda)_{\sigma}\right).$$
The result then follows by noting that $\text{id}\otimes\varepsilon_{\la}$ sends
$$M \otimes \left(\sum_{\sigma<\lambda} \nabla(\lambda)_{\sigma}\right) \rightarrow 0.$$
\end{proof}

\begin{lemma}\label{L:maxvectordown}
Assume $p = 2$. Let $w_{\rho + 2\alpha_0}$ be a highest weight vector of $\St_1 \otimes \Delta(\alpha_0)^{(1)}$ and $w_{\rho}$ be a maximal vector generating the simple submodule $\St_1 \otimes k \le \St_1 \otimes \Delta(\alpha_0)^{(1)}$.  There is some $X \in \Dist(U)$ of weight $-2\alpha_0$ such that
$$X.w_{\rho + 2\alpha_0} = w_{\rho}.$$
The comultiplication of $X$ in $\Dist(U)$ is given by
$$\Delta(X)=X \otimes 1 + 1 \otimes X + \sum X_i^{\prime} \otimes X_i^{\prime\prime},$$
where for each $i$
$$-2\alpha_0<\textup{wt}(X_i^{\prime}),\textup{wt}(X_i^{\prime\prime})<0.$$
\end{lemma}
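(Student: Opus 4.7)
Plan. The plan is to prove the two parts of the lemma separately, the first as a consequence of the cyclic generation of Weyl modules and the second as a weight-combinatorial analysis of the coproduct on divided powers.

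For the existence of $X$: by (the dual of) \cite[Proposition II.3.19]{rags} applied at $p=2$, we have the isomorphism $\St_1 \otimes \Delta(\alpha_0)^{(1)} \cong \Delta(\rho+2\alpha_0)$. This Weyl module is cyclically generated over $\Dist(U)$ (the lowering algebra in this paper's conventions) by its highest weight vector $w_{\rho+2\alpha_0}$, so any weight vector of $\Delta(\rho+2\alpha_0)$, including the nonzero vector $w_\rho$, can be written as $Y.w_{\rho+2\alpha_0}$ for some $Y \in \Dist(U)$. Since $\Dist(U)$ is graded by the nonpositive root lattice and the action respects this grading, replacing $Y$ by its weight-$(-2\alpha_0)$ homogeneous component yields the desired $X \in \Dist(U)_{-2\alpha_0}$.

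For the comultiplication statement, I would expand $X$ in the PBW basis of the weight space $\Dist(U)_{-2\alpha_0}$, which consists of ordered monomials $\prod_{\beta \in \Phi^+} E_{-\beta}^{(n_\beta)}$ with $\sum_\beta n_\beta \beta = 2\alpha_0$. On each divided power factor the standard Hopf algebra formula $\Delta(E_{-\beta}^{(n)}) = \sum_{i+j=n} E_{-\beta}^{(i)} \otimes E_{-\beta}^{(j)}$ applies; combined with the fact that $\Delta$ is an algebra homomorphism, the coproduct of each PBW monomial expands as a sum of tensors $A \otimes B$. In this expansion the contribution $X_{\text{mono}} \otimes 1$ arises uniquely by selecting $j=0$ in every factor, and $1 \otimes X_{\text{mono}}$ arises uniquely from $i=0$ throughout; every remaining summand has both $A$ and $B$ involving at least one factor $E_{-\beta}^{(j)}$ with $j \geq 1$. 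Such $A$ and $B$ are then nontrivial products of divided powers and hence have strictly negative weight, and the constraint $\textup{wt}(A) + \textup{wt}(B) = -2\alpha_0$ forces both weights to lie strictly between $-2\alpha_0$ and $0$ in the root-lattice partial order. Linearity of the coproduct lifts this monomial-by-monomial analysis to the decomposition stated in the lemma.

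The main obstacle is purely the careful combinatorial bookkeeping needed to verify that the PBW expansion produces no hidden $Y \otimes 1$ or $1 \otimes Z$ contributions for $Y, Z$ of weight $-2\alpha_0$ distinct from $X$; tracking the left-right choices in each divided power factor shows that only the two extreme boundary summands yield primitive terms, with every other summand falling strictly in the interior of the weight interval between $-2\alpha_0$ and $0$.
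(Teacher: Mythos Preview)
Your argument is correct. For the existence of $X$, your reasoning is essentially identical to the paper's: cyclicity of the Weyl module over $\Dist(U)$, followed by projection onto the weight-$(-2\alpha_0)$ component. For the comultiplication statement, however, the paper takes a shorter and more conceptual route than your explicit PBW computation. It simply observes that the augmentation ideal $I$ of $\Dist(U)$ is spanned by the $T$-weight vectors of nonzero weight, so $X \in I$; then it invokes the general Hopf-algebra identity $\Delta(x) - x\otimes 1 - 1\otimes x \in I\otimes I$ for any $x \in I$ (a direct consequence of the counit axioms). Combined with the fact that $\Delta$ preserves total $T$-weight and that every element of $I$ has strictly negative weight in $\Dist(U)$, this immediately forces each $X_i'$ and $X_i''$ to have weight strictly between $-2\alpha_0$ and $0$. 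Your PBW expansion is a concrete realization of exactly this fact, and your closing paragraph correctly handles the only delicate point (that summing the boundary terms over all monomials recovers precisely $X\otimes 1$ and $1\otimes X$); but the augmentation-ideal argument avoids the bookkeeping entirely.
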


\begin{proof}
The Weyl module $\Delta(\rho+2\alpha_0) \cong \St_1 \otimes \Delta(\alpha_0)^{(1)}$ is generated over $B$, and over $U$, by any highest weight vector.  The same is true over $\Dist(B)$ and $\Dist(U)$, thus there is some $X \in \Dist(U)$ that gives the required action.  Moreover, it is clear that we can chose $X$ to be a $T$-weight vector of weight $-2\alpha_0$ (indeed, any $X$ such that $X.w_{\rho + 2\alpha_0} = w_{\rho}$ will be a sum of weight vectors, and any elements in the sum not having weight $-2\alpha_0$ must then act as zero, so we can modify $X$ by subtracting off if necessary such terms).

The augmentation ideal of $\Dist(U)$ is the vector subspace spanned by all $T$-weight vectors of weight $\ne 0$, hence $X$ is in this ideal.  The claim about $\Delta(X)$ then follows from a general fact about the comultiplication of elements in the augmentation ideal of a Hopf algebra, together with the fact that the terms in $\Delta(X)$ must have total weight $-2\alpha_0$.
\end{proof}

\subsection{}\label{S:Areductions}
Returning to our special assumptions on $G$, $p$, and $\la$, we now give a series of reductions toward proving that any summand of $\St_1 \otimes L(\lambda)$ containing $\St_1 \otimes L(\alpha_0)^{(1)}$ as a composition factor is tilting.

\bigskip
\noindent \textbf{Reduction 1:} By Lemma \ref{L:nosubmodule}, this is equivalent to showing that $\St_1 \otimes L(\alpha_0)^{(1)}$ does not appear as a submodule of $\St_1 \otimes L(\lambda)$.\\

\noindent \textbf{Reduction 2:} This is equivalent to showing that any maximal vector in $\St_1 \otimes L(\lambda)$ of weight $\rho+2\alpha_0$ generates a submodule isomorphic to $\St_1 \otimes \Delta(\alpha_0)^{(1)}$.\\

\noindent \textbf{Reduction 3:} By Lemma \ref{L:maxvectordown}, this is equivalent to showing that if $v_{\rho+2\alpha_0}$ is any such maximal vector of $\St_1 \otimes L(\lambda)$ and $X \in \Dist(U)$ is chosen as in the lemma, then $X.v_{\rho+2\alpha_0} \ne 0$.\\

\subsection{}
For this subsection, we restrict ourselves to the case $G=SL_6$ and $\lambda = \varpi_1+\varpi_2+\varpi_4+\varpi_5$. Fix, for each positive root $\beta$, the usual negative and positive Chevalley basis elements $f_{\beta}$ and $e_{\beta}$ of ${\mathfrak g}$ coming from the natural representation, and let $h_{\beta}=[e_{\beta},f_{\beta}]$.  We view these as elements inside $\Dist(G)$.

We have $[e_{\alpha_i},f_{\alpha_j}] = 0$ if $i \ne j$.  Because $p=2$, we also have 
$$[h_{\alpha_i},e_{\alpha_j}] = e_{\alpha_j} \qquad \text{and} \qquad [h_{\alpha_i},f_{\alpha_j}] = f_{\alpha_j} \qquad \text{if $\abs{i-j}=1$},$$
otherwise
$$[h_{\alpha_i},e_{\alpha_j}] = 0 = [h_{\alpha_i},f_{\alpha_j}].$$

One computes that
$$\lambda-2\alpha_0=\alpha_2+\alpha_3+\alpha_4.$$
Let $J=\{\alpha_2,\alpha_3,\alpha_4\}$. By \cite[II.2.11]{rags} (see \cite[Proposition 3.3]{GGN} for a more thorough discussion), the $L_J$-Weyl module $\Delta_J(\lambda)$ is a direct summand of $\Delta(\lambda)$, considered as an $L_J$-module.  More specifically
$$\Delta_J(\lambda) \cong \sum_{\lambda-\mu \in \mathbb{Z}J} \Delta(\lambda)_{\mu},$$
with the $L_J$-complement to $\Delta_J(\lambda)$ consisting of the sum of the remaining weight spaces.  If we further restrict our attention to the weight spaces from $2\alpha_0$ to $\lambda$, it follows (by weight considerations) that there is an isomorphism of $B^+$-modules
$$\sum_{\mu \ge 2\alpha_0} \Delta(\lambda)_{\mu} \cong \sum_{\mu \ge 2\alpha_0} \Delta_J(\lambda)_{\mu}.$$

The derived subgroup $(L_J,L_J)$ is isomorphic to $SL_4$, the restriction of $\Delta_J(\lambda)$ to $(L_J,L_J)$ is the adjoint representation, and $2\alpha_0$ restricts to the zero weight for $T \cap (L_J,L_J)$.  Using the structure of $\Lie(SL_4)$, one readily computes the $B^+$ structure of $\sum_{\mu \ge 2\alpha_0} \Delta(\lambda)_{\mu}$.

In particular, given a maximal weight vector $z_{\la}$ of $\Delta(\la)$, the following is a $T$-basis for $\Delta(\lambda)_{2\alpha_0}$:
$$\{f_{\alpha_2}f_{\alpha_3}f_{\alpha_4}.z_{\lambda}, \quad f_{\alpha_4}f_{\alpha_3}f_{\alpha_2}.z_{\lambda}, \quad f_{\alpha_3}f_{\alpha_4}f_{\alpha_2}.z_{\lambda} \}.$$
We also have $(f_{\alpha_2}f_{\alpha_3}f_{\alpha_4}.z_{\lambda}+f_{\alpha_4}f_{\alpha_3}f_{\alpha_2}.z_{\lambda})$ as a $U^+$-fixed vector, and
\begin{equation}\label{E:L_Jquotient}
L(\lambda)_{2\alpha_0} \cong \Delta(\lambda)_{2\alpha_0}/(f_{\alpha_2}f_{\alpha_3}f_{\alpha_4}.z_{\lambda}+f_{\alpha_4}f_{\alpha_3}f_{\alpha_2}.z_{\lambda}).\end{equation}

Let $w_{\rho}$ be a highest weight vector in $\St_1$.  Any $T$-basis $\{y_i\}$ for $\Dist(U_1)$ yields a $T$-basis $\{y_i.w_{\rho}\}$ for $\St_1$.  While a standard choice is to take a PBW-basis after choosing an ordering of roots, one can alternatively take a basis consisting of products of the various $f_{\alpha_i}$ for $\alpha_i$ a simple root.  While it is in general harder to list all such basis elements in this manner, we will find it more convenient in our limited consideration.

\begin{lemma}\label{L:actionofe}
Assume $G$ is $SL_6$ and $p = 2.$  Let $w_{\rho}$ be a a highest weight vector in $\St_1$.  Let $\alpha_{i_1},\ldots,\alpha_{i_m}$ be an ordered collection of simple roots without repetition (so $1 \leq m \leq 5$).  For any simple root $\alpha$, 
$$e_{\alpha}f_{\alpha_{i_1}}\cdots f_{\alpha_{i_m}}.w_{\rho}=0$$
if $\alpha_{i_j}\ne \alpha$ for all $i_j$.  Otherwise, if some $\alpha_{i_j}=\alpha$, then
$$e_{\alpha}f_{\alpha_{i_1}}\cdots f_{\alpha_{i_m}}.w_{\rho}=(s+1)f_{\alpha_{i_1}}\cdots f_{\alpha_{i_{j-1}}} f_{\alpha_{i_{j+1}}} \cdots f_{\alpha_{i_m}}.w_{\rho},$$
where $s$ is the number of $i_k$ such that $k>j$ and $\abs{i_j-i_k}=1$.
\end{lemma}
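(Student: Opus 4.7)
The plan is to expand the commutator of $e_\alpha$ with the product $f_{\alpha_{i_1}}\cdots f_{\alpha_{i_m}}$ and then evaluate the resulting operator on the maximal vector $w_\rho$ using weight considerations. The key input is that since $w_\rho$ is the highest weight vector of $\St_1$, one has $e_\alpha.w_\rho=0$ for every simple root $\alpha$, so moving $e_\alpha$ all the way to the right past the $f_{\alpha_{i_k}}$'s will annihilate $w_\rho$ unless a commutator contribution survives along the way.

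First, I would invoke the standard ``Leibniz-type'' identity in $\Dist(G)$:
$$
e_\alpha f_{\alpha_{i_1}}\cdots f_{\alpha_{i_m}} \;=\; \sum_{j=1}^{m} f_{\alpha_{i_1}}\cdots f_{\alpha_{i_{j-1}}}\,[e_\alpha,f_{\alpha_{i_j}}]\,f_{\alpha_{i_{j+1}}}\cdots f_{\alpha_{i_m}} \;+\; f_{\alpha_{i_1}}\cdots f_{\alpha_{i_m}} e_\alpha.
$$
The trailing term kills $w_\rho$, and the Chevalley relation $[e_\alpha,f_{\alpha_{i_j}}]=\delta_{\alpha,\alpha_{i_j}}h_{\alpha}$ for simple roots leaves only those $j$ with $\alpha_{i_j}=\alpha$. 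Because the ordered collection $\alpha_{i_1},\ldots,\alpha_{i_m}$ has no repetition, such a $j$ is unique if it exists. If no $\alpha_{i_j}$ equals $\alpha$, every term in the sum vanishes, giving the first claim of the lemma.

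In the remaining case, the expression reduces to $f_{\alpha_{i_1}}\cdots f_{\alpha_{i_{j-1}}}\,h_\alpha\, f_{\alpha_{i_{j+1}}}\cdots f_{\alpha_{i_m}}.w_\rho$. I would then note that $f_{\alpha_{i_{j+1}}}\cdots f_{\alpha_{i_m}}.w_\rho$ is a $T$-weight vector of weight $\rho-\sum_{k>j}\alpha_{i_k}$, and so $h_\alpha$ acts on it by the scalar
$$
\langle\rho,\alpha^\vee\rangle \;-\; \sum_{k>j}\langle\alpha_{i_k},\alpha^\vee\rangle.
$$
Since $\alpha$ is simple, $\langle\rho,\alpha^\vee\rangle=1$. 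For each $k>j$, the root $\alpha_{i_k}$ is simple and distinct from $\alpha$ (no repetition), so $\langle\alpha_{i_k},\alpha^\vee\rangle$ equals $-1$ precisely when $|i_k-i_j|=1$ and $0$ otherwise. The scalar therefore equals $1+s$, giving exactly $(s+1)f_{\alpha_{i_1}}\cdots f_{\alpha_{i_{j-1}}}f_{\alpha_{i_{j+1}}}\cdots f_{\alpha_{i_m}}.w_\rho$ as claimed.

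There is no genuine obstacle here; the lemma is a bookkeeping computation in $\Dist(G)$. The only thing to watch is the reduction of coefficients modulo $p=2$, but the formula $1-(-s)=1+s$ is already an integer identity and passes unchanged to $\mathbb{F}_2$.
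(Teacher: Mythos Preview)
Your proof is correct and follows essentially the same approach as the paper: both commute $e_\alpha$ past the $f_{\alpha_{i_k}}$'s, pick up the single $h_\alpha$ term at position $j$, and then evaluate the action of $h_\alpha$ on the remaining product applied to $w_\rho$. The only cosmetic difference is that the paper commutes $h_{\alpha_{i_j}}$ past each $f_{\alpha_{i_k}}$ using the mod-$2$ relations $[h_{\alpha_i},f_{\alpha_j}]=f_{\alpha_j}$ for $|i-j|=1$, whereas you read off the scalar directly from the weight $\rho-\sum_{k>j}\alpha_{i_k}$; these are equivalent computations.
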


\begin{proof}
If no $\alpha_{i_j}=\alpha$, then $e_{\alpha}$ commutes past each $f_{\alpha_{i_j}}$ in $\Dist(G_1)$, and since $e_{\alpha}$ annihilates $w_{\rho}$, the first statement follows.

Otherwise, if $\alpha=\alpha_{i_j}$ for some $j$, then $e_{\alpha}$ commutes past each $f_{\alpha_{i_{\ell}}}$, $\ell < j$.  One then applies the commutation relations above.  We have
\begin{align*}
e_{\alpha}f_{\alpha_{i_1}}\cdots f_{\alpha_{i_m}}.w_{\rho} 
&= f_{\alpha_{i_1}}\cdots f_{\alpha_{i_{j-1}}}e_{\alpha}f_{\alpha_{i_j}}f_{\alpha_{i_{j+1}}}\cdots f_{\alpha_{i_m}}.w_{\rho}\\
&= f_{\alpha_{i_1}}\cdots f_{\alpha_{i_{j-1}}}(f_{\alpha_{i_j}}e_{\alpha}+h_{\alpha_{i_j}})f_{\alpha_{i_{j+1}}}\cdots f_{\alpha_{i_m}}.w_{\rho}\\
&= f_{\alpha_{i_1}}\cdots f_{\alpha_{i_{j-1}}}f_{\alpha_{i_j}}e_{\alpha}f_{\alpha_{i_{j+1}}}\cdots f_{\alpha_{i_m}}.w_{\rho} + f_{\alpha_{i_1}}\cdots f_{\alpha_{i_{j-1}}}h_{\alpha_{i_j}}f_{\alpha_{i_{j+1}}}\cdots f_{\alpha_{i_m}}.w_{\rho}\\
&= f_{\alpha_{i_1}}\cdots f_{\alpha_{i_{j-1}}}h_{\alpha_{i_j}}f_{\alpha_{i_{j+1}}}\cdots f_{\alpha_{i_m}}.w_{\rho},
\end{align*}
since the first term is seen to be zero, by commuting the $e_{\alpha}$ past the remaining terms.  We now use the fact that
$$h_{\alpha_{i_j}}f_{\alpha_{i_k}}=f_{\alpha_{i_k}}h_{\alpha_{i_j}}+f_{\alpha_{i_k}}=f_{\alpha_{i_k}}(h_{\alpha_{i_j}}+1)$$
if $\abs{i_j-i_k}=1$, otherwise
$$h_{\alpha_{i_j}}f_{\alpha_{i_k}}=f_{\alpha_{i_k}}h_{\alpha_{i_j}}.$$
Repeatedly applying this we obtain 
$$
e_{\alpha}f_{\alpha_{i_1}}\cdots f_{\alpha_{i_m}}.w_{\rho} = 
f_{\alpha_{i_1}}\cdots f_{\alpha_{i_{j-1}}}f_{\alpha_{i_{j+1}}}\cdots f_{\alpha_{i_m}}(h_{\alpha_{i_j}}+s).w_{\rho},$$
and as $h_{\alpha_{i_j}}.w_{\rho}=w_{\rho}$, the result follows.
\end{proof}

\begin{lemma}\label{L:basis}
Assume $G$ is $SL_6$ and $p = 2$. The following vectors form a basis of maximal vectors in $\St_1 \otimes \Delta(\lambda)$ having weight $\rho+2\alpha_0$:
$${\bf{v}_{1}}=w_{\rho} \otimes f_{\alpha_3}f_{\alpha_2}f_{\alpha_4}.z_{\lambda} + f_{\alpha_2}.w_{\rho} \otimes f_{\alpha_3}f_{\alpha_4}.z_{\lambda} + f_{\alpha_4}.w_{\rho} \otimes f_{\alpha_3}f_{\alpha_2}.z_{\lambda}$$
$$ + f_{\alpha_2}f_{\alpha_3}.w_{\rho} \otimes f_{\alpha_4}.z_{\lambda} + f_{\alpha_4}f_{\alpha_3}.w_{\rho} \otimes f_{\alpha_2}.z_{\lambda}$$
$$ + (f_{\alpha_2}f_{\alpha_3}f_{\alpha_4} + f_{\alpha_4}f_{\alpha_3}f_{\alpha_2}).w_{\rho} \otimes z_{\lambda},$$

$${\bf{v}_{2}}=w_{\rho} \otimes f_{\alpha_2}f_{\alpha_3}f_{\alpha_4}.z_{\lambda} + f_{\alpha_3}.w_{\rho} \otimes f_{\alpha_2}f_{\alpha_4}.z_{\lambda} + f_{\alpha_3}f_{\alpha_2}.w_{\rho} \otimes f_{\alpha_4}.z_{\lambda}$$
$$+ f_{\alpha_3}f_{\alpha_4}.w_{\rho} \otimes f_{\alpha_2}.z_{\lambda} + f_{\alpha_3}f_{\alpha_2}f_{\alpha_4}.w_{\rho} \otimes z_{\lambda},$$

$${\bf{v}_{3}}=w_{\rho} \otimes (f_{\alpha_2}f_{\alpha_3}f_{\alpha_4}+f_{\alpha_4}f_{\alpha_3}f_{\alpha_2}).z_{\lambda}.$$
\end{lemma}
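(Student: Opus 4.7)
The plan is a direct computation that splits into three steps: identifying a basis for the weight space $(\St_1\otimes\Delta(\la))_{\rho+2\al_0}$, verifying that $\mathbf{v}_1,\mathbf{v}_2,\mathbf{v}_3$ are maximal vectors, and showing that these three vectors span all maximal vectors of this weight.

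First, any simple tensor $v\otimes z$ of weight $\rho+2\al_0$ in $\St_1\otimes\De(\la)$ has $\operatorname{wt}(v)=\rho-\si$ and $\operatorname{wt}(z)=2\al_0+\si$, where $\si$ is a non-negative integer combination of simple roots; in particular, $\operatorname{wt}(z)\ge 2\al_0$. From the Levi analysis with $J=\{\al_2,\al_3,\al_4\}$ carried out earlier in the section, every such $z$ lies in the subspace $\sum_{\mu\ge 2\al_0}\De(\la)_\mu \cong \sum_{\mu\ge 2\al_0}\De_J(\la)_\mu$, which is governed by the adjoint representation of $SL_4$, and each such $z$ is expressible as a linear combination of products $f_{\al_{j_1}}\cdots f_{\al_{j_m}}.z_\la$ with each $j_k\in\{2,3,4\}$. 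Pairing these with a basis of the finitely many relevant weight spaces of $\St_1$, obtained via a PBW basis of $\Dist(U_1)$ applied to $w_\rho$, yields an explicit basis for $(\St_1\otimes\De(\la))_{\rho+2\al_0}$.

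Next, to verify that each $\mathbf{v}_\ell$ is maximal, apply $e_{\al_i}$ for every simple root $\al_i$ using the Hopf-algebra coproduct $e_{\al_i}\otimes 1+1\otimes e_{\al_i}$. For $\al_i\in\{\al_1,\al_5\}$, the operator $e_{\al_i}$ annihilates every $\De(\la)$-factor appearing in $\mathbf{v}_\ell$, since it commutes past each $f_{\al_j}$ with $j\in\{2,3,4\}$ and then kills $z_\la$; the check reduces to the $\St_1$-factor, which is handled by Lemma~\ref{L:actionofe}. For $\al_i\in\{\al_2,\al_3,\al_4\}$, both factors contribute: Lemma~\ref{L:actionofe} handles the $\St_1$-side, while the $\De(\la)$-side is handled by standard $\mathfrak{sl}_4$-Chevalley computations within $\De_J(\la)$. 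The grouping of terms within $\mathbf{v}_1$, $\mathbf{v}_2$, and $\mathbf{v}_3$ is precisely engineered so that the contributions cancel pairwise, and tracking the parity factor $(s+1)$ from Lemma~\ref{L:actionofe} modulo $2$ is the main bookkeeping task.

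Finally, linear independence of $\mathbf{v}_1,\mathbf{v}_2,\mathbf{v}_3$ is immediate by inspection, since each contains distinguishing summands absent from the others. To show these span all maximal vectors of weight $\rho+2\al_0$, one writes a general element of the weight space with undetermined coefficients against the basis from the first step, imposes the five conditions $e_{\al_i}\cdot(-)=0$, and solves the resulting linear system; the expected outcome is a three-dimensional solution space. The main obstacle is the maximality verification in the second step, where the parity-sensitive formulas of Lemma~\ref{L:actionofe} must be combined carefully with the $\mathfrak{sl}_4$-action on $\De_J(\la)$; once maximality is established, the spanning statement reduces to a rank computation.
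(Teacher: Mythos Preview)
Your approach is essentially the same as the paper's for the two substantive parts---linear independence and maximality---and is correct. Two points of comparison are worth noting.

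First, the paper dispatches $e_{\alpha_1}$ and $e_{\alpha_5}$ more cheaply than you do: since $(\rho+\lambda)-(\rho+2\alpha_0)=\alpha_2+\alpha_3+\alpha_4$, the weight $\rho+2\alpha_0+\alpha_i$ for $i\in\{1,5\}$ does not occur in $\St_1\otimes\Delta(\lambda)$ at all, so there is nothing to check. This saves you from invoking Lemma~\ref{L:actionofe} on the $\St_1$-factor for those two roots. Linear independence is argued in the paper by looking only at the $w_\rho\otimes(\cdot)$ terms, which are visibly independent in $\Delta(\lambda)_{2\alpha_0}$; this is the same as your ``distinguishing summands'' observation.

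Second, you go further than the paper: you propose to verify that the space of maximal vectors of weight $\rho+2\alpha_0$ is exactly three-dimensional by solving the linear system imposed by $e_{\alpha_i}\cdot(-)=0$. The paper's proof establishes only that $\mathbf{v}_1,\mathbf{v}_2,\mathbf{v}_3$ are linearly independent maximal vectors and does not explicitly verify the spanning claim. Your plan to confirm spanning is sound (the relevant weight space is small and the system is tractable), and it makes the ``basis'' assertion in the lemma statement fully justified rather than implicit.
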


\begin{proof}
The elements
$$f_{\alpha_3}f_{\alpha_2}f_{\alpha_4}.z_{\lambda}, \quad f_{\alpha_2}f_{\alpha_3}f_{\alpha_4}.z_{\lambda}, \quad \text{and} \quad (f_{\alpha_2}f_{\alpha_3}f_{\alpha_4}+f_{\alpha_4}f_{\alpha_3}f_{\alpha_2}).z_{\lambda}$$
are linearly independent in $\Delta(\lambda)$.  From this it follows that the three different sums of simple tensors listed above are linearly independent in $\St_1 \otimes \Delta(\lambda)$.

To verify maximality, since $(\rho+\lambda)-(\rho+2\alpha_0) = \alpha_2+\alpha_3+\alpha_4$, we need only check that each $e_{\alpha_i}$, $2 \le i \le 4$, annihilates these elements, where the action on a simple tensor is via $e_{\alpha_i} \otimes 1 + 1 \otimes e_{\alpha_i}$.  The verification for ${\bf{v}_{3}}$ is immediate as it is annihilated both by $e_{\alpha_i} \otimes 1$ and by $1 \otimes e_{\alpha_i}$.  For ${\bf{v}_{1}},{\bf{v}_{2}}$, one applies Lemma \ref{L:actionofe} to see that the sum $e_{\alpha_i} \otimes 1 + 1 \otimes e_{\alpha_i}$ annihilates each vector.
\end{proof}

Fix a surjective $G$-module homomorphism $f:\Delta(\lambda) \rightarrow L(\lambda)$.  Over $L_J$, $f$ restricts to a surjective homomorphism $\Delta_J(\lambda) \rightarrow L_J(\lambda)$.  By (\ref{E:L_Jquotient}), it follows that
$$(\text{ker} \, f) \cap \Delta(\lambda)_{2\alpha_0} = \text{Span}\{f_{\alpha_2}f_{\alpha_3}f_{\alpha_4}.z_{\lambda}+f_{\alpha_4}f_{\alpha_3}f_{\alpha_2}.z_{\lambda}\}.$$
We obtain from $f$ a surjective $G$-module homomorphism
$$\text{id} \otimes f: \St_1 \otimes \Delta(\lambda) \rightarrow \St_1 \otimes L(\lambda).$$
On a simple tensor of the form $w_{\rho} \otimes v$ we have $(\text{id} \otimes f)(w_{\rho} \otimes v)=w_{\rho} \otimes f(v)$.  Given the definition of $f$ on $\Delta(\lambda)_{2\alpha_0}$, it follows that $(\text{id} \otimes f)({\bf{v}_{3}}) = 0$, and that
$(\text{id} \otimes f)({\bf{v}_{1}})$ and  $(\text{id} \otimes f)({\bf{v}_{2}})$ are linearly independent.

\begin{lemma}
Assume $G$ is $SL_6$ and $p = 2$. Let $X \in \Dist(U)$ be as in Lemma \ref{L:maxvectordown} and ${\bf{v}_i}$ be as in Lemma \ref{L:basis}.  Then, in $\St_1\otimes L(\la)$, 
$$X.(\textup{id} \otimes f)({\bf{v}_{1}}) \ne 0, \quad X.(\textup{id} \otimes f)({\bf{v}_{2}}) \ne 0.$$
Consequently, $(\St_1 \otimes \Delta(\alpha_0)^{(1)})^{\oplus 2} \hookrightarrow \St_1 \otimes L(\lambda)$.
\end{lemma}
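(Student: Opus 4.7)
The strategy is to isolate the $w_\rho$-coefficient of $X\cdot(\text{id}\otimes f)(\mathbf{v}_i)$ with respect to the $T$-weight decomposition of $\St_1$ and show it is nonzero, thereby forcing the entire element to be nonzero. Since $\text{id}\otimes f$ is a $G$-module homomorphism, this amounts to computing $(\text{id}\otimes f)(X\cdot\mathbf{v}_i)$, which I will do by first expanding $X\cdot\mathbf{v}_i$ inside $\St_1\otimes\Delta(\la)$ and then projecting.

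Expanding via the coproduct $\Delta(X) = X\otimes 1 + 1\otimes X + \sum X_j'\otimes X_j''$ from Lemma~\ref{L:maxvectordown}, the only contribution to the component whose first tensor factor is a multiple of $w_\rho$ comes from the $1\otimes X$ piece applied to the unique simple-tensor summand of $\mathbf{v}_i$ with first factor $w_\rho$. Indeed, both $X$ and every $X_j'$ have strictly negative $T$-weight, so applying any of them to a weight vector of $\St_1$ produces a vector of weight strictly less than $\rho$ (since $\rho$ is the highest weight of $\St_1$); hence the $X\otimes 1$ and cross-term pieces contribute nothing to the $w_\rho$-component. Reading off Lemma~\ref{L:basis}, the only summand of $\mathbf{v}_i$ with first factor $w_\rho$ is $w_\rho\otimes u_i$, where
\begin{equation*}
u_1 := f_{\alpha_3}f_{\alpha_2}f_{\alpha_4}\cdot z_\la, \qquad u_2 := f_{\alpha_2}f_{\alpha_3}f_{\alpha_4}\cdot z_\la.
\end{equation*}
Thus the $w_\rho$-component of $X\cdot\mathbf{v}_i$ equals $w_\rho\otimes(X\cdot u_i)$, and after applying $\text{id}\otimes f$ the lemma reduces to showing that $X\cdot u_i\notin \ker f$ in $\Delta(\la)$ for $i=1,2$.

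The main obstacle is this explicit computation. I would pin down a concrete representative $X\in\Dist(U)$ of weight $-2\alpha_0$ satisfying $X\cdot w_{\rho+2\alpha_0} = w_\rho$ by running the same coproduct calculation inside $\Delta(\rho+2\alpha_0)\cong\St_1\otimes\Delta(\alpha_0)^{(1)}$; the weight-separation argument forces $X\cdot w_\rho = 0$ in $\St_1$ and $X\cdot w_{2\alpha_0}^{(1)}$ to equal a nonzero multiple of the socle generator $\zeta^{(1)}\in\Delta(\alpha_0)^{(1)}$, thereby determining $X$ up to an explicit ambiguity (a suitable divided-power expression involving $f_{\alpha_0}^{(2)}$ is a natural candidate). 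With such an $X$ in hand, $X\cdot u_i$ can be expanded in $\Delta(\la)$ via the Chevalley commutation relations and compared against $\ker f\cap\Delta(\la)_0$, whose structure is constrained by the same $(L_J,L_J)\cong SL_4$-equivariant decomposition of $\Delta_J(\la)$ as the adjoint representation that produced \eqref{E:L_Jquotient} at weight $2\alpha_0$; I expect the analysis to further show that $f(X\cdot u_1)$ and $f(X\cdot u_2)$ are linearly independent in $L(\la)_0$.

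Granted the non-vanishing, each maximal vector $(\text{id}\otimes f)(\mathbf{v}_i)$ determines a $G$-morphism $\phi_i\colon\Delta(\rho+2\alpha_0)\to\St_1\otimes L(\la)$, and since $\Delta(\rho+2\alpha_0)$ has a two-step Loewy structure with socle $\St_1$ and simple head $\St_1\otimes L(\alpha_0)^{(1)}$, the non-vanishing of $\phi_i(w_\rho) = X\cdot(\text{id}\otimes f)(\mathbf{v}_i)$ rules out the only nontrivial way for $\phi_i$ to have nonzero kernel, forcing $\phi_i$ to be injective. Combined with the linear independence of $\phi_1(w_\rho)$ and $\phi_2(w_\rho)$ (inherited from that of $f(X\cdot u_i)$), the combined map $\Delta(\rho+2\alpha_0)^{\oplus 2}\to\St_1\otimes L(\la)$ is injective as well: any nonzero kernel, viewed as a submodule of $\Delta(\rho+2\alpha_0)^{\oplus 2}$ with composition factors among $L(\rho)$ and $L(\rho+2\alpha_0)$, would contradict one of the two independence statements. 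This produces the claimed embedding $(\St_1\otimes\Delta(\alpha_0)^{(1)})^{\oplus 2}\hookrightarrow\St_1\otimes L(\la)$.
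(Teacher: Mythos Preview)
Your reduction to showing $X\cdot u_i \notin \ker f$ via the $w_\rho$-component is correct, and your identification of the $w_\rho$-component of $X\cdot\mathbf{v}_i$ as $w_\rho\otimes(X\cdot u_i)$ from the coproduct is exactly right. However, you leave the crucial step---actually verifying $X\cdot u_i\notin\ker f$---as an explicit computation to be carried out later (pin down $X$, expand $X\cdot u_i$, compare against $\ker f\cap\Delta(\lambda)_0$). This is a genuine gap: $\ker f\cap\Delta(\lambda)_0$ is nontrivial, the element $X$ is not uniquely determined, and the computation, while perhaps feasible, is neither obvious nor performed here. Your argument for the direct-sum embedding at the end also rests on the unproved linear independence of $f(X\cdot u_1)$ and $f(X\cdot u_2)$.

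The paper circumvents all explicit computation by a clever change of viewpoint. First, since $\St_1\otimes\Delta(\lambda)$ has a Weyl filtration (tensor product of Weyl modules), each $\mathbf{v}_i$ generates a copy of $\St_1\otimes\Delta(\alpha_0)^{(1)}$, so $X\cdot\mathbf{v}_i\neq 0$; by Lemma~\ref{L:simpletensorsum} (applied with the tensor factors swapped, using $\St_1=L(\rho)$) each $X\cdot\mathbf{v}_i$ then has nonzero $w_\rho$-component, i.e.\ $X\cdot u_i\neq 0$ in $\Delta(\lambda)$ for \emph{all three} $i$---no computation needed. Second, there is a surjective $U$-module map $\St_1\to\Delta(\lambda)$ with $w_\rho\mapsto z_\lambda$; hence any element of $\Dist(U)$ not annihilating $z_\lambda$ cannot annihilate $w_\rho$. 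Applied to $Xf_{\alpha_3}f_{\alpha_2}f_{\alpha_4}$ and $X(f_{\alpha_2}f_{\alpha_3}f_{\alpha_4}+f_{\alpha_4}f_{\alpha_3}f_{\alpha_2})$ (the operators giving $u_1$ and $u_3$), this yields two nonzero elements of $\St_1$. But by inspection of the last terms in the displays for $\mathbf{v}_1,\mathbf{v}_2$ in Lemma~\ref{L:basis}, these are precisely the first tensor factors of the $z_\lambda$-components of $X\cdot\mathbf{v}_2$ and $X\cdot\mathbf{v}_1$, respectively. Since $f(z_\lambda)\neq 0$, the $z_\lambda$-components survive under $\mathrm{id}\otimes f$, and the result follows. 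Note the essential role of $\mathbf{v}_3$: even though $(\mathrm{id}\otimes f)(\mathbf{v}_3)=0$, the non-vanishing of $X\cdot u_3$ in $\Delta(\lambda)$ is exactly what is used to handle $\mathbf{v}_1$.
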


\begin{proof}
The module $\St_1 \otimes \Delta(\lambda)$, being the tensor product of two Weyl modules, has a Weyl filtration.  Therefore, the vectors ${\bf{v}_{1}},{\bf{v}_{2}},$ and ${\bf{v}_{3}}$  each generate copies of $\St_1 \otimes \Delta(\alpha_0)^{(1)}$.  It follows then that $X.{\bf{v}_{i}} \ne 0$, for $1\le i \le 3$.  Further, since the ${\bf{v}_{i}}$ are linearly independent, the elements $X.{\bf{v}_{i}}$ must be also (the ${\bf{v}_{i}}$ must together generate a submodule isomorphic to $(\St_1 \otimes \Delta(\alpha_0)^{(1)})^{\oplus 3}$).  Thus each $X.{\bf{v}_{i}}$ is a maximal vector in $\St_1 \otimes \Delta(\lambda)$ of weight $\rho$ (lying within a copy of $\St_1$ inside the $G$-socle of $\St_1 \otimes \Delta(\la)$).  By Lemma \ref{L:simpletensorsum}, it follows that each $X.{\bf{v}_{i}}$, expressed as a sum of simple tensors in $\St_1 \otimes \Delta(\lambda)$, necessarily involves a term of the form $w_{\rho} \otimes v_0$, with $0 \ne v_0 \in \Delta(\lambda)_0$.  On the other hand, multiplying the expression for the comultiplication of $X$ in Lemma \ref{L:maxvectordown} against the explicit computations of the ${\bf{v}_{i}}$ above, for every $X.{\bf{v}_{i}}$ to contain a term of this form it must hold that 
$$w_{\rho} \otimes Xf_{\alpha_3}f_{\alpha_2}f_{\alpha_4}.z_{\lambda}  \ne 0,\quad w_{\rho} \otimes Xf_{\alpha_2}f_{\alpha_3}f_{\alpha_4}.z_{\lambda}  \ne  0, \quad w_{\rho} \otimes X(f_{\alpha_2}f_{\alpha_3}f_{\alpha_4}+f_{\alpha_4}f_{\alpha_3}f_{\alpha_2}).z_{\lambda}   \ne  0.$$
There is a non-zero $B$-module (composite) homomorphism
$$\St_1 \rightarrow \Delta(\lambda) \otimes \nabla(\rho-\lambda) \rightarrow \Delta(\lambda) \otimes (\rho-\lambda)$$
which restricts to a surjective $U$-module homomorphism $\St_1 \rightarrow \Delta(\lambda)$ (this holds more generally).  In particular, any element in $\Dist(U)$ that does not annihilate $z_{\lambda}$ cannot annihilate $w_{\rho}$.  It follows that
$$Xf_{\alpha_3}f_{\alpha_2}f_{\alpha_4}.w_{\rho} \otimes z_{\lambda}  \ne 0,\quad  X(f_{\alpha_2}f_{\alpha_3}f_{\alpha_4}+f_{\alpha_4}f_{\alpha_3}f_{\alpha_2}).w_{\rho} \otimes z_{\lambda}   \ne  0.$$
Since $f(z_{\lambda}) \ne 0$, then
$$(\textup{id} \otimes f)(X.{\bf{v}_{1}}) \ne 0, \quad (\textup{id} \otimes f)(X.{\bf{v}_{2}}) \ne 0,$$
as each element, as a sum of simple tensors in $\St_1 \otimes L(\lambda)$, contains a non-zero term that cannot be canceled out by the other terms. 
\end{proof}

By our reductions in Section \ref{S:Areductions}, it follows that $\St_1\otimes L(\la)$ has a good filtration. As discussed in Section \ref{S:A5}, this case was the only remaining one to check for $SL_6$, which proves the following.

\begin{theorem}\label{T:typeA5}
Assume $G = SL_6$ and $p = 2$.  Then
\begin{itemize}
\item[(a)] $\St_r\otimes L(\la)$ has a good filtration for all $\la \in X_r$, $r \geq 1$.
\item[(b)] In this case Conjecture~\ref{donkinconj}($\Rightarrow$) holds.
\end{itemize}
\end{theorem}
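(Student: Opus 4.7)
The plan is to assemble the theorem from the material already developed: Section \ref{S:A5} reduces the problem to a single stubborn restricted weight, and the detailed tensor-product computations of Section \ref{S:char2} handle that remaining case. For part (a), I first invoke Theorem \ref{T:reduces to r=1} to reduce to $r=1$. The summary at the end of Section \ref{S:A5} shows that the results of Sections 3 and 4 dispose of every $\la\in X_1$ for $SL_6$ with $p=2$ except $\la=\varpi_1+\varpi_2+\varpi_4+\varpi_5$. In this remaining case, the only composition factor of $\St_1\otimes L(\la)$ that could obstruct a good filtration is $\St_1\otimes L(\alpha_0)^{(1)}$, where $\alpha_0=\varpi_1+\varpi_5$ is the highest root.

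Next I would apply Lemma \ref{L:nosubmodule}, which converts the existence of a good filtration on $\St_1\otimes L(\la)$ into the vanishing of $\Hom_G(\St_1\otimes L(\alpha_0)^{(1)},\St_1\otimes L(\la))$. The three reformulations in Section \ref{S:Areductions} then translate this Hom-vanishing into the concrete assertion that the element $X\in\Dist(U)$ from Lemma \ref{L:maxvectordown} acts non-trivially on every maximal vector of weight $\rho+2\alpha_0$ in $\St_1\otimes L(\la)$. To execute this, I would take the explicit basis ${\bf v}_1,{\bf v}_2,{\bf v}_3$ for the $(\rho+2\alpha_0)$-maximal vectors of $\St_1\otimes\Delta(\la)$ supplied by Lemma \ref{L:basis} and push them into $\St_1\otimes L(\la)$ via the surjection $\operatorname{id}\otimes f$; the key computation, carried out using the commutation relations in $\Dist(U)$ together with Lemmas \ref{L:FrobTensor} and \ref{L:simpletensorsum}, is that $(\operatorname{id}\otimes f)({\bf v}_3)=0$, while $X.(\operatorname{id}\otimes f)({\bf v}_1)$ and $X.(\operatorname{id}\otimes f)({\bf v}_2)$ are non-zero and linearly independent. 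This immediately produces the embedding $(\St_1\otimes\Delta(\alpha_0)^{(1)})^{\oplus 2}\hookrightarrow\St_1\otimes L(\la)$.

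The main obstacle I anticipate is ruling out any \emph{additional} maximal vectors of weight $\rho+2\alpha_0$ in $\St_1\otimes L(\la)$ lying outside the span of $(\operatorname{id}\otimes f)({\bf v}_1)$ and $(\operatorname{id}\otimes f)({\bf v}_2)$. I would handle this by applying $(-)^{U^+}$ to the short exact sequence $0\to\St_1\otimes\ker f\to\St_1\otimes\Delta(\la)\to\St_1\otimes L(\la)\to 0$ and arguing, via $B^+$-weight restrictions on $\ker f$ together with a $\tau$-duality comparison to the good-filtered module $\St_1\otimes\nabla(\la)$, that the connecting map on $(\rho+2\alpha_0)$-weight maximal vectors is zero; equivalently, that $\Ext^1_G(\Delta(\rho+2\alpha_0),\St_1\otimes\ker f)$ vanishes in this weight. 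This forces the three-dimensional space of maximal vectors in $\St_1\otimes\Delta(\la)$ to surject onto the maximal vectors in $\St_1\otimes L(\la)$, so that the latter is exactly two-dimensional and is spanned by the two vectors already shown to be moved by $X$. Combined with the previous step, this verifies Reduction 3 of Section \ref{S:Areductions} and completes part (a).

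Part (b) is then immediate: as recalled in the introduction, Conjecture~\ref{donkinconj}($\Rightarrow$) is equivalent to the statement that $\St_r\otimes L(\la)$ has a good filtration for every $\la\in X_r$ and every $r\geq 1$, which is exactly the content of part (a).
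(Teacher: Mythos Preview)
Your outline is the paper's own argument: reduce to $r=1$ via Theorem~\ref{T:reduces to r=1}, isolate $\lambda=\varpi_1+\varpi_2+\varpi_4+\varpi_5$ via Section~\ref{S:A5}, and then run the machinery of Section~\ref{S:char2} (Lemma~\ref{L:nosubmodule}, the reductions in Section~\ref{S:Areductions}, the basis of maximal vectors in Lemma~\ref{L:basis}, and the $X$-computation in the final lemma). Part (b) is exactly as you say.

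You are right to flag the surjectivity issue, and in fact the paper does not spell this step out: after establishing $X\cdot(\mathrm{id}\otimes f)({\bf v}_i)\neq 0$ for $i=1,2$ and deducing $(\St_1\otimes\Delta(\alpha_0)^{(1)})^{\oplus 2}\hookrightarrow\St_1\otimes L(\lambda)$, the paper simply says ``By our reductions\ldots it follows.'' Your $\Ext^1$-argument is a reasonable plan for filling this in, though what you have written is more of a sketch than a proof (``$B^+$-weight restrictions on $\ker f$ together with a $\tau$-duality comparison'' would need the composition factors of $\operatorname{rad}\Delta(\lambda)$ to be made explicit). Note also a closely related point that neither you nor the paper make explicit: for the direct-sum embedding $(\St_1\otimes\Delta(\alpha_0)^{(1)})^{\oplus 2}$ one needs not just that each $X\cdot(\mathrm{id}\otimes f)({\bf v}_i)$ is nonzero but that the two are \emph{linearly independent}; equivalently, that $X$ annihilates no nonzero element of the span of $(\mathrm{id}\otimes f)({\bf v}_1),(\mathrm{id}\otimes f)({\bf v}_2)$. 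Without this, the self-duality of $\St_1\otimes L(\lambda)$ does not by itself exclude a summand of type $\St_1\otimes L(\alpha_0)^{(1)}$ or $\St_1\otimes\nabla(\alpha_0)^{(1)}$. One clean way to handle both points at once is to check that the $f(z_\lambda)$-components $X(f_{\alpha_2}f_{\alpha_3}f_{\alpha_4}+f_{\alpha_4}f_{\alpha_3}f_{\alpha_2}).w_\rho$ and $Xf_{\alpha_3}f_{\alpha_2}f_{\alpha_4}.w_\rho$ are linearly independent in $\St_1$, which (via the $U$-surjection $\St_1\to\Delta(\lambda)$ used in the paper) reduces to a concrete computation in $\Delta(\lambda)_0$.
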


\section{Tensoring With $k[G_r]$} 

\subsection{} Recall that given any affine scheme $X$ over $k$ and an algebraic action of $G$ on $X$, the coordinate ring $k[X]$ becomes a $G$-module by the action $g.f(x)=f(g^{-1}.x)$ (such actions are intended to hold functorially, i.e., for every $k$-algebra $A$, every $g \in G(A), x \in X(A),$ and $f \in A[X_A]$).  In particular, $k[G]$ becomes a $G$-module via the conjugation action of $G$ on itself.  Further, since $G_r$ is a normal subgroup scheme of $G$, one similarly obtains a $G$-module structure on $k[G_r]$.

There is a $(G \times G)$-action on $G$ via $(g_1,g_2).h=g_1hg_2^{-1}$.  If we take the diagonal embedding of $G$ into $(G \times G)$, then the restriction of this action is the conjugation action of $G$ on itself.  We recall the following result due to Donkin and Koppinen \cite[Proposition II.4.20]{rags} (see also \cite[Remark II.4.21]{rags}).

\begin{proposition}
As a $(G \times G)$-module, $k[G]$ has a good filtration.  The good filtration factors are of the form
$$\nabla(\lambda)\otimes \nabla(\lambda^{*}), \qquad \lambda \in X_+,$$
each occuring with multiplicity one.  In particular, $k[G]$, as a $G$-module under the action induced by the conjugation action of $G$, has a good filtration.
\end{proposition}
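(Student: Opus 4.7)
The plan is to prove the statement in two stages: first the $(G\times G)$-module claim, then deduce the conjugation corollary. My approach would follow the classical Donkin--Koppinen strategy.

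\textbf{Stage 1 (the $(G\times G)$-filtration).} For each $\la \in X_+$, the matrix coefficient construction gives a $(G \times G)$-equivariant injection
$$
\Psi_\la : \Delta(\la) \otimes \Delta(\la)^* \into k[G], \qquad v \otimes \phi \mapsto \bigl(g \mapsto \phi(g^{-1}v)\bigr),
$$
where the first copy of $G$ acts on $\Delta(\la)$ and the second on $\Delta(\la)^* \cong \nabla(\la^*)$. I would introduce a Donkin-type truncation filtration of $k[G]$ indexed by finite saturated subsets $\pi \subset X_+$: set $O_\pi(k[G])$ to be the largest $(G\times G)$-submodule whose composition factors are of the form $L(\mu) \otimes L(\nu)$ with $\mu\in\pi$. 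A weight-space computation with respect to $T \times T$, using that the left and right $T$-characters of $k[G]$ are dual to each other, forces $\nu = \mu^*$ for any such composition factor.

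Then, for $\la$ maximal in $\pi$, I would identify
$$
O_\pi(k[G]) \bigm/ O_{\pi\setminus\{\la\}}(k[G]) \cong \nabla(\la) \otimes \nabla(\la^*).
$$
One direction comes from $\Psi_\la$, producing a subquotient with at least the correct composition factors. The \emph{upgrade} to the actual induced-module-on-both-sides structure is the delicate point, and it uses the vanishing of $\Ext^1_{G \times G}\!\bigl(\Delta(\mu) \otimes \Delta(\mu^*),\, \nabla(\la) \otimes \nabla(\la^*)\bigr)$ for $\mu \neq \la$ (via a K\"unneth-type decomposition and standard Weyl/induced Ext-vanishing), together with the multiplicity-one computation $\Hom_{G \times G}(\Delta(\la) \otimes \Delta(\la^*), k[G]) \cong k$ via Frobenius reciprocity. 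Local finiteness of the $(G\times G)$-action on $k[G]$ then shows $\bigcup_\pi O_\pi(k[G]) = k[G]$, giving an exhaustive $(G\times G)$-good filtration of the required form.

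\textbf{Stage 2 (the conjugation corollary).} Since $h \mapsto g h g^{-1}$ equals the restriction of $(g_1,g_2)\cdot h = g_1 h g_2^{-1}$ along the diagonal $G \into G \times G$, each filtration subquotient $\nabla(\la) \otimes \nabla(\la^*)$ becomes, after restriction, a tensor product of two induced $G$-modules under the diagonal (tensor) action. By Mathieu's theorem (cf.\ \cite[II.4.21]{rags}), such tensor products admit good $G$-filtrations; refining the $(G\times G)$-filtration on $k[G]$ along these gives a good $G$-filtration under conjugation.

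\textbf{Main obstacle.} The hardest part is the upgrade in Stage 1 from $\Delta(\la) \otimes \nabla(\la^*)$ (the image of $\Psi_\la$) to $\nabla(\la) \otimes \nabla(\la^*)$ in the subquotients. A priori the matrix coefficient construction only produces a Weyl factor on one side, whereas the proposition demands induced factors on \emph{both} sides. Resolving this asymmetry requires either pairing $\Psi_\la$ with a dual matrix coefficient construction starting from $\nabla(\la)$, or the $(G \times G)$-Ext-vanishing argument sketched above, and is the technical heart of the proof.
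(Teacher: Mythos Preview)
The paper does not actually prove this proposition; it is stated there purely as a recall of a known result, attributed to Donkin and Koppinen and cited from \cite[Proposition II.4.20]{rags} (with \cite[Remark II.4.21]{rags} for the conjugation consequence). So there is no argument in the paper to compare against.

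That said, your outline is a correct sketch of the classical Donkin--Koppinen argument. The truncation by saturated subsets $\pi \subset X_+$, the matrix-coefficient embeddings, and the identification of successive quotients $O_\pi / O_{\pi \setminus \{\la\}}$ with $\nabla(\la) \otimes \nabla(\la^*)$ are exactly the ingredients in Jantzen's treatment. Your ``main obstacle'' is also correctly identified: the asymmetry in the matrix-coefficient image is resolved in the literature not quite by the Ext-vanishing route you sketch, but rather by first analyzing $k[G]$ as a $(G \times B)$-module (where the factors are visibly $\nabla(\la) \otimes (-w_0\la)$) and then inducing up on the second factor, using that $R^i\ind_B^G$ vanishes on these pieces by Kempf. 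Your Ext-vanishing alternative would also work, but is slightly less direct. Stage~2 is exactly as in \cite[II.4.21]{rags}.
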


\subsection{} In consideration of this result and Conjecture \ref{donkinconj}, we ask the following question 
that has been answered in the affirmative when $p$ is good by Donkin \cite{donkin18}. 
\begin{quest}
For each $r \ge 1$, does $\St_r \otimes k[G_r]$ have a good filtration?
\end{quest}

We conclude by making the following observation.  Let $I_{\varepsilon}$ denote the augmentation ideal of $k[G]$.  Under the conjugation action of $G$, this ideal is a $G$-submodule of $k[G]$, with $G$-module complement spanned by the subalgebra $k\cdot 1 \le k[G]$.  It follows then that $I_{\varepsilon}$ has a good filtration over $G$.  For each $r \ge 1$, let $M_r$ denote the subset of $k[G]$ defined as
$$M_r = \{ x^{p^r} \mid x \in I_{\varepsilon}\}.$$
Since $k$ is algebraically closed and of characteristic $p$, this is a subspace of $k[G]$, and indeed a submodule for $G$.  Further, the ideal defining the closed subgroup scheme $G_r$ is the ideal generated by $M_r$.  Thus, the algebra homomorphism $k[G] \rightarrow k[G_r]$ factors (as $G$-modules) through $k[G]/M_r$.

\begin{theorem}
The $G$-module $\St_r \otimes (k[G]/M_r)$ has a good filtration.
\end{theorem}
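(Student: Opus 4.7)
The plan is to apply the exact functor $\St_r\otimes-$ to the short exact sequence of $G$-modules
\[
0\to M_r\to k[G]\to k[G]/M_r\to 0
\]
and deduce the result from the cohomological criterion for good filtrations (Theorem~\ref{T:cohcrit}). Concretely, if both $\St_r\otimes M_r$ and $\St_r\otimes k[G]$ have good filtrations, then in the long exact sequence for $\Ext^{*}_{G}(\Delta(\mu),-)$ the group $\Ext^{1}_{G}(\Delta(\mu),\St_r\otimes(k[G]/M_r))$ is squeezed between $\Ext^{1}_{G}(\Delta(\mu),\St_r\otimes k[G])=0$ and $\Ext^{2}_{G}(\Delta(\mu),\St_r\otimes M_r)=0$, which gives the conclusion.

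For $\St_r\otimes k[G]$, the preceding Donkin--Koppinen proposition gives that $k[G]$ has a good filtration under conjugation, and since $\St_r$ is tilting, Mathieu's tensor product theorem yields a good filtration on $\St_r\otimes k[G]$. The main claim to establish is that there is an isomorphism of $G$-modules (for the conjugation action)
\[
M_r\cong (I_{\varepsilon})^{(r)}.
\]
Once this is granted, the $G$-invariant counit $\varepsilon\colon k[G]\to k$ gives a splitting $k[G]\cong k\oplus I_{\varepsilon}$ of $G$-modules, so $I_{\varepsilon}$ inherits a good filtration from $k[G]$, and hence $(I_{\varepsilon})^{(r)}$ has a filtration with factors $\nabla(\mu_i)^{(r)}\cong \nabla(p^{r}\mu_i)$. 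The Steinberg tensor identity $\St_r\otimes \nabla(p^{r}\mu_i)\cong \nabla((p^{r}-1)\rho+p^{r}\mu_i)$ (as used in the proof of Theorem~\ref{T: boundonp}) then produces a good filtration on $\St_r\otimes M_r$.

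To prove $M_r\cong (I_{\varepsilon})^{(r)}$, consider the $p^{r}$-th power map $\phi\colon I_{\varepsilon}\to M_r$ sending $x\mapsto x^{p^{r}}$. It is bijective because $k[G]$ is reduced (as $G$ is smooth), it is $G$-equivariant because conjugation acts by $k$-algebra automorphisms (which commute with $p^{r}$-th powers), and it is $p^{r}$-semilinear over $k$. Because $G$ is defined over $\mathbb{F}_p$ (item (2) of the notation), one may choose an $\mathbb{F}_p$-rational basis of $I_{\varepsilon}$ in which the matrix coefficients $a_{ij}$ of the conjugation representation lie in $\mathbb{F}_p[G]$; then $a_{ij}(g)^{p^{r}}=a_{ij}(F^{r}(g))$, which says precisely that the $G$-action transported to $M_r$ through $\phi$ coincides with the Frobenius-twisted action on $(I_{\varepsilon})^{(r)}$ in the convention of the notation. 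Aligning the semilinear $p^{r}$-th power map on one side with the $k$-linear Frobenius twist on the other, and in particular exploiting the $\mathbb{F}_p$-structure on $G$, is the main technical point requiring care; once it is in place, the remaining steps are standard.
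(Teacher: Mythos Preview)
Your overall strategy is exactly the paper's: identify $M_r\cong I_\varepsilon^{(r)}$ as $G$-modules, deduce that $\St_r\otimes M_r$ and $\St_r\otimes k[G]$ have good filtrations, and then conclude for the quotient via the long exact sequence and Theorem~\ref{T:cohcrit}. The paper realizes the isomorphism $M_r\cong I_\varepsilon^{(r)}$ via the comorphism $(F^r)^*$ of the Frobenius (which on the chosen $\mathbb{F}_p$-form is precisely the $p^{r}$-th power map you use), so your argument for that step is the same in substance.

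There is one genuine slip to correct. The assertion $\nabla(\mu_i)^{(r)}\cong\nabla(p^{r}\mu_i)$ is false in general; one only has an inclusion $\nabla(\mu_i)^{(r)}\subseteq\nabla(p^{r}\mu_i)$ (the paper in fact records exactly this inclusion immediately after its proof). Consequently your stated ``Steinberg tensor identity'' $\St_r\otimes\nabla(p^{r}\mu_i)\cong\nabla((p^{r}-1)\rho+p^{r}\mu_i)$ is also wrong as written: the left side is a tensor product of two induced modules and is typically not a single $\nabla$. The identity you need, and the one actually invoked in the proof of Theorem~\ref{T: boundonp} (there in its dual $\Delta$-form) and cited in the paper as \cite[II.3.19]{rags}, is
\[
\St_r\otimes\nabla(\mu_i)^{(r)}\;\cong\;\nabla\bigl((p^{r}-1)\rho+p^{r}\mu_i\bigr).
\]
Simply drop the spurious intermediate isomorphism and apply this directly to the sections $\nabla(\mu_i)^{(r)}$ of $(I_\varepsilon)^{(r)}$; with that fix your proof is complete and agrees with the paper's.
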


\begin{proof}
If $X$ and $Y$ are any two affine schemes with algebraic $G$-actions, and if $f:X \rightarrow Y$ is a $G$-equivariant homomorphism of schemes, then the comorphism $f^*:k[Y] \rightarrow k[X]$ is a homomorphism of $G$-modules (with the induced $G$-action on coordinate rings as above).

Now let $F:G \rightarrow G$ be the Frobenius morphism arising from a chosen split $\mathbb{F}_p$-structure on $G$, and let $F^r$ denote its $r$-th iterate.  Consider $(F^r)^*: k[G] \to k[G]$.  We claim that
$$(F^r)^*(I_{\varepsilon}) = M_r.$$
To see this, let $\mathbb{F}_p[G]$ denote the chosen $\mathbb{F}_p$-structure of $k[G]$ and $I_{\varepsilon,\mathbb{F}_p}$ denote the augmentation ideal of this Hopf algebra.  The map $(F^r)^*$ then arises (using this chosen $\mathbb{F}_p$-structure) as simply the $p^r$-th power map on $\mathbb{F}_p[G]$.  If we fix an $\mathbb{F}_p$-basis $\{x_i\}$ for $I_{\varepsilon,\mathbb{F}_p}$, then $\{x_i \otimes_{\mathbb{F}_p} 1\}$ is a $k$-basis for $I_{\varepsilon}$.  We have that
$$(F^r)^*(x_i \otimes_{\mathbb{F}_p} 1) = {x_i}^{p^r} \otimes_{\mathbb{F}_p} 1 = ({x_i} \otimes_{\mathbb{F}_p} 1)^{p^r}.$$
From this the claim follows. Further, since $G$ is reduced, $(F^r)^*$ is injective, and so $(F^r)^*$ maps $I_{\varepsilon}$ bijectively to $M_r$.

Now, the homomorphism $F^r:G \rightarrow G$ is $G$-equivariant, if $G$ acts on the target via $F^r$.  Therefore, the comorphism $(F^r)^*$ is a $G$-module homomorphism.  It follows then that, as a $G$-module, $M_r \cong {I_{\varepsilon}}^{(r)}$.
Applying \cite[II.3.19]{rags}, it follows that $\St_r \otimes M_r$ has a good filtration.  Since $\St_r \otimes k[G]$ also has a good filtration, it then follows from \cite[II.4.17]{rags} (which is a consequence of Theorem \ref{T:cohcrit}) that $\St_r \otimes (k[G]/M_r)$ has a good filtration.
\end{proof}

For each $\lambda \in X_+$, $\nabla(\lambda)^{(r)} \subseteq \nabla(p^r\lambda)$.  In particular,
$$\nabla(\lambda)^{(r)} \otimes \nabla(\lambda^{*})^{(r)} \subseteq \nabla(p^r\lambda) \otimes \nabla(p^r\lambda^{*}).$$
Note that the proof of the preceding result shows that there is a submodule of $k[G]$, namely $M_r$, having a filtration over $G$ with sections of the form $\nabla(\lambda)^{(r)} \otimes \nabla(\lambda^{*})^{(r)}$, $0 \ne \lambda \in X_+$.


\providecommand{\bysame}{\leavevmode\hbox
to3em{\hrulefill}\thinspace}

\end{document}